\documentclass[12pt, a4paper]{article}
\usepackage{}

\usepackage{amsfonts}
\usepackage{bbm}
\usepackage{mathrsfs}
\usepackage{latexsym}
\usepackage{graphicx}
\usepackage{epstopdf}
\usepackage{amssymb}
\usepackage{amsmath}
\usepackage{color,xcolor}
\usepackage{mathtools}
\usepackage{cite}
\usepackage{tabularx}

\marginparwidth 0pt
\oddsidemargin 0pt
\evensidemargin 0pt
\topmargin -0.5 cm
\textheight 23.2 truecm
\textwidth 16.0 truecm
\parskip 8pt

\newtheorem{theorem}{Theorem}[section]
\newtheorem{lemma}[theorem]{Lemma}
\newtheorem{corollary}[theorem]{Corollary}

\newtheorem{conjecture}[theorem]{Conjecture}
\newtheorem{remark}[theorem]{Remark}
\newtheorem{question}[theorem]{Question}

\title{{\Large \bf On the Ky Fan $k$-norm of the $LI$-matrix of graphs
\thanks{ Supported by the National Natural Science Foundation of China (No. 12071411).}~}}
\author{Zhen Lin$^a$\thanks{Corresponding author. E-mail addresses:
lnlinzhen@163.com (Z. Lin).},  Lianying Miao$^b$, Guanglong Yu$^c$, Han Sheng$^d$\\
{\footnotesize $^a$School of Mathematics and Statistics, Qinghai Normal University,}\\ {\footnotesize Xining, 810008, Qinghai, China }\\
\footnotesize $^b$School of Mathematics, China University of Mining and Technology,\\
 \footnotesize Xuzhou, 221116, Jiangsu, P.R. China\\
\footnotesize  $^c$ Department of Mathematics, Lingnan Normal University,
\\  \footnotesize  Zhanjiang, 524048, Guangdong, P.R. China \\
\footnotesize  $^d$ Mathematical Institute, University of Oxford,
\\  \footnotesize   Oxford, OX2 6GG, United Kingdom
}

\date{}
\begin{document}
\openup 1.0\jot
\date{}\maketitle
\begin{abstract}
Let $A(G)$ and $D(G)$ be the adjacency matrix and the degree diagonal matrix of a graph $G$, respectively. Then $L(G)=D(G)-A(G)$ is called Laplacian matrix of the graph $G$. Let $G$ be a graph with $n$ vertices and $m$ edges. Then the $LI$-matrix of $G$ are defined as
$LI(G)=L(G)-\frac{2m}{n}I_n$,
where $I_n$ is the identity matrix. In this paper, we are interested in extremal properties of the Ky Fan $k$-norm of the $LI$-matrix of graphs, which is closely related to the well known problems and results in spectral graph theory, such as the Laplacian spectral radius, the Laplacian spread, the sum of the $k$ largest Laplacian eigenvalues, the Laplacian energy, and other parameters. Some bounds on the Ky Fan $k$-norm of the $LI$-matrix of graphs are given, and the extremal graphs are partly characterized. In addition, upper and lower bounds on the Ky Fan $k$-norm of $LI$-matrix of trees, unicyclic graphs and bicyclic graphs are determined, and the corresponding extremal graphs are characterized.

\bigskip

\noindent {\bf AMS subject classifications:} 05C50

\noindent {\bf Keywords:} $LI$-matrix; Ky Fan $k$-norm; Laplacian eigenvalues; Singular values; Extremal graph
\end{abstract}
\baselineskip 20pt

\section{\large Introduction}
Let $G$ be a simple finite undirected graph with vertex set $V(G)$ and edge set $E(G)$.
The matrix $L(G)=D(G)-A(G)$ is called the Laplacian matrix of $G$, and its eigenvalues can be arranged as:
$$n\geq \mu_1(G)\geq \mu_2(G)\geq \cdots \geq \mu_{n-1}(G)\geq \mu_n(G)=0,$$
where $\mu_1(G)$ and $\mu_{n-1}(G)$ are called Laplacian spectral radius and algebraic connectivity of $G$, respectively. The investigation on the eigenvalues of Laplacian matrix of graphs is a topic of interest in spectral graph theory. There are amount of results on the eigenvalues of $L(G)$ in the literature, such as the Laplacian spectral radius \cite{GLS, PS}, the Laplacian spread $spr(L(G))$ \cite{AGRR, EK, ZSH}, the sum of the $k$ largest Laplacian eigenvalues $S_k(L(G))$ \cite{FHRT, GPRT, HMT}, the Laplacian energy $LE(G)$ \cite{DG, DMG, GZ}, etc. The $spr(L(G))$, $S_k(L(G))$ and $LE(G)$ are defined as follows:
$$spr(L(G))=\mu_1(G)-\mu_{n-1}(G), \quad S_k(L(G))=\sum\limits_{i=1}^{k}\mu_i(G), \quad LE(G)=\sum\limits_{i=1}^{n}\left\lvert \mu_i(G)-\frac{2m}{n} \right\rvert.$$

Recently, the trace norm of the adjacency matrix $A(G)$ of a graph $G$, defined as the sum of the singular values of $A(G)$, has been extensively studied under the name of graph energy \cite{LSG}. For generalizing and enriching the study of graph energy, Nikiforov \cite{N, N3} investigated the Ky Fan $k$-norm of adjacency matrix of a graph $G$, that is
$$||A(G)||_{F_k}=\sigma_1(A(G))+\sigma_2(A(G))+\cdots+\sigma_k(A(G)),$$
where $\sigma_1(A(G))\geq \sigma_2(A(G))\geq \cdots\geq \sigma_n(A(G))$ are the singular values of
the adjacency matrix $A(G)$, i.e. the nonnegative square roots of the eigenvalues of $A(G)A^T(G)$. Since the singular values of a real symmetric matrix are the moduli of its eigenvalues, the Ky Fan $k$-norm of adjacency matrix of a graph $G$ is also the sum of the $k$ largest absolute values of the eigenvalues of $A(G)$. He showed that some well-known problems and results in spectral graph theory are best stated in terms of the Ky Fan $k$-norm, for example, this norm is related to energy, spread, spectral radius, and other parameters. Thus he suggested to study arbitrary Ky Fan $k$-norm of graphs and proposed many interesting questions, especially the maximal Ky Fan
$k$-norm of graphs of given order. Later, Nikiforov has done a series of systematic in-depth analyses and researches for the Ky Fan
$k$-norm, which are not restricted to the adjacency matrix of graphs. One may refer to \cite{N, N1, N2, N3, NY} for more details on the Ky Fan $k$-norm.

Motivated by the above works, we study the Ky Fan $k$-norm of the $LI$-matrix of graphs. Let $G$ be a graph with $n$ vertices and $m$ edges. Then the $LI$-matrix of $G$ is defined as
$$LI(G)=L(G)-\frac{2m}{n}I_n.$$
 By the definition of the Ky Fan $k$-norm, we have
$||LI(G)||_{F_k}=\sum_{i=1}^{k}\sigma_i(LI(G))$,
where the singular values of $LI(G)$ is always indexed in decreasing order.
Clearly, $||LI(G)||_{F_n}=LE(G)$. Thus, a close examination of $||LI(G)||_{F_k}$ further advances the study of Laplacian energy of graphs.
In particular, $\sigma_1(LI(G))$ is called the spectral norm of the $LI$-matrix. Moreover, if $G$ is a regular graph, then $||LI(G)||_{F_k}=||A(G)||_{F_k}$.
From a geometric perspective, the Ky Fan $k$-norm of the $LI$-matrix of graphs represents the ordered sum of the distance between Laplacian eigenvalues and the average of all Laplacian eigenvalues, which is relevant to the hard problem that distribution of Laplacian eigenvalues of graphs in spectral graph theory, see \cite{JOT}.

In this paper, the extremal properties of the Ky Fan $k$-norm of the $LI$-matrix of graphs are studied. Around the following Nikiforov's question, upper and lower bounds on the Ky Fan $k$-norm of $LI$-matrix of trees, unicyclic graphs and bicyclic graphs are given, and the corresponding extremal graphs are characterized, which integrates previous results on the Laplacian spectral radius, the Laplacian spread and the sum of the $k$ largest Laplacian eigenvalues of trees, unicyclic graphs and bicyclic graphs.

\begin{question}{\bf (\cite{N})}
Study the extrema of the Ky Fan $k$-norm of a graph $G$, and their relations to the structure of $G$.
\end{question}

The rest of the paper is organized as follows. In Section 2, we introduce some notions and lemmas which we need to use in the proofs of our results. In Section 3, some properties on $\sigma_1(LI(G))$, $\sigma_n(LI(G))$ and $||LI(G)||_{F_2}$ of a graph $G$ are obtained. In Section 4, some bounds on $||LI(G)||_{F_k}$ of a graph $G$ are presented, and the extremal graphs are partly characterized. In Sections 5, lower and upper bounds on the Ky Fan $k$-norm of $LI$-matrix of trees, unicyclic graphs and bicyclic graphs are obtained, and the corresponding extremal graphs are characterized.

\section{\large  Preliminaries}

Denote by $K_n$, $P_n$, $C_n$ and $K_{1,\, n-1}$ the complete graph, path, cycle and star with $n$ vertices, respectively. For $v_i\in V(G)$, $d_G(v_i)=d_i(G)$ denotes the degree of vertex $v_i$ in $G$. The minimum and the maximum degree of $G$ are denoted by $\delta=\delta(G)$ and $\Delta=\Delta(G)$, respectively. We assume that $d_1(G)\geq d_2(G)\geq \cdots \geq d_n(G)$ and say that $d=(d_1(G), d_2(G), \ldots, d_n(G))$ is the degree sequence of the graph $G$. The conjugate of a degree sequence $d$ is the sequence $d^{*}=(d_1^*(G), d_2^*(G), \ldots, d_n^*(G))$ where $d_i^*(G)=|\{j: d_j(G)\geq i\}|$ is the number of vertices
of $G$ of degree at least $i$. For a graph $G$, the first Zagreb index $Z_1=Z_1(G)$ is defined as the sum of the squares of the vertices degrees.

A threshold graph may be obtained through an iterative process which starts with an isolated vertex, and at each step, either a new isolated vertex is added, or a vertex
adjacent to all previous vertices (dominating vertex) is added. The double star $S_{a,\, b}$ is the tree obtained from $K_2$ by attaching $a$ pendant edges to a vertex and $b$ pendant edges to the other. A connected graph is called a $c$-cyclic graph if it contains $n$ vertices and $n+c-1$ edges. Specially, if $c=0$, $1$ or $2$, then $G$ is called a tree, a unicyclic graph, or a bicyclic graph, respectively. The $G_{m,\,n}$, shown in Fig. 2.1, is a graph with $n$ vertices and $m$ edges which has $m-n+1$ triangles with a common edge and $2n-m+3$ pendent edges incident with one end vertex of the common edge. The join graph $G_1\vee G_2$ is the graph obtained from $G_1\cup G_2$ by joining every vertex of $G_1$ with every vertex of $G_2$.

\begin{lemma}{\bf (\cite{GM, K})}\label{le2,1} 
Let $G$ be a graph with $n$ vertices and at least one edge. Then $\Delta+1\leq \mu_1(G)\leq n$. The left equality for
connected graph holds if and only if $\Delta=n-1$, and the right equality holds if and only if the complement of $G$ is disconnected.
\end{lemma}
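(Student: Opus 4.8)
The plan is to treat the two inequalities separately, each together with its equality case, using the complement identity $L(G)+L(\overline{G})=L(K_n)=nI_n-J_n$ (where $J_n$ is the all-ones matrix) for the upper bound, and a single explicit test vector in the Rayleigh quotient for the lower bound.

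For the upper bound $\mu_1(G)\le n$: since $G$ has at least one edge, $L(G)\neq 0$ and $L(G)\succeq 0$, so $\mu_1(G)>0$; hence a $\mu_1(G)$-eigenvector $x$ lies in the orthogonal complement of the all-ones vector $\mathbf{1}$ (which spans the $0$-eigenspace), in particular $J_n x=0$. From $L(G)+L(\overline{G})=nI_n-J_n$ we then get $\mu_1(G)=x^{\top}L(G)x=n-x^{\top}L(\overline{G})x\le n$, because $L(\overline{G})$ is positive semidefinite. For the equality case, $\mu_1(G)=n$ holds iff there is a nonzero $x\perp\mathbf{1}$ with $L(G)x=nx$, which by the same identity (and $J_nx=0$) is equivalent to $L(\overline{G})x=0$; such an $x$ exists precisely when $\ker L(\overline{G})$ strictly contains $\mathrm{span}(\mathbf{1})$, i.e. when $\overline{G}$ has more than one component. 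Here I would invoke the standard fact that the multiplicity of $0$ as a Laplacian eigenvalue equals the number of connected components.

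For the lower bound $\mu_1(G)\ge\Delta+1$: let $v$ be a vertex of degree $\Delta$ with neighbourhood $N(v)=\{u_1,\dots,u_\Delta\}$, and take the test vector $x$ defined by $x_v=\Delta$, $x_{u_i}=-1$ for $1\le i\le\Delta$, and $x_w=0$ for every other vertex $w$. Substituting into $\mu_1(G)\ge \frac{\sum_{ij\in E(G)}(x_i-x_j)^2}{\sum_{i}x_i^2}$, the $\Delta$ edges incident with $v$ contribute $\Delta(\Delta+1)^2$, edges with both ends in $N(v)$ and edges with both ends outside $\{v\}\cup N(v)$ contribute $0$, and each edge joining $N(v)$ to $V(G)\setminus(\{v\}\cup N(v))$ contributes $1$; since $\sum_i x_i^2=\Delta(\Delta+1)$, this yields $\mu_1(G)\ge(\Delta+1)+\frac{t}{\Delta(\Delta+1)}$ where $t\ge 0$ counts those ``leaving'' edges. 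Thus $\mu_1(G)\ge\Delta+1$. If in addition $G$ is connected and $\Delta<n-1$, then $\{v\}\cup N(v)\neq V(G)$, so connectivity forces an edge leaving this set, and that edge cannot be incident with $v$ (all of $v$'s neighbours already lie inside), whence $t\ge1$ and the inequality is strict. Consequently, for connected $G$, $\mu_1(G)=\Delta+1$ forces $\Delta=n-1$; conversely, if $\Delta=n-1$ then $v$ is isolated in $\overline{G}$, so $\overline{G}$ is disconnected and the equality case of the upper bound gives $\mu_1(G)=n=\Delta+1$.

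The routine parts are the two short computations (the complement identity and the Rayleigh quotient of the explicit vector). The only places needing a little care are the equality analyses: linking $\ker L(\overline{G})\supsetneq\mathrm{span}(\mathbf{1})$ to disconnectedness of $\overline{G}$, and, on the lower-bound side, noting that the edge produced by connectivity is genuinely not among the edges at $v$ (immediate, since $N(v)$ is exactly the neighbour set of $v$) — this is also where the connectedness hypothesis is essential, as $K_{1,\Delta}$ plus isolated vertices shows it cannot be dropped. I do not expect any step to be a serious obstacle.
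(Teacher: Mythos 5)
Your proposal is correct, but note that the paper itself gives no proof of this statement: Lemma \ref{le2,1} is quoted from the literature (Grone--Merris and Kelmans), so there is nothing internal to compare against. Your argument is a sound, self-contained derivation. The upper bound and its equality case via $L(G)+L(\overline G)=nI_n-J_n$ is essentially the classical complement argument (equivalently, $\mu_i(\overline G)=n-\mu_{n-i}(G)$ plus the fact that the multiplicity of the Laplacian eigenvalue $0$ counts components), and all the reductions you make there check out: the $\mu_1$-eigenvector is orthogonal to $\mathbf 1$ simply because $\mathbf 1$ is a $0$-eigenvector of the symmetric matrix $L(G)$ and $\mu_1>0$ --- your parenthetical that $\mathbf 1$ \emph{spans} the $0$-eigenspace is only true for connected $G$, but it is not needed, so this is cosmetic. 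For the lower bound, the route usually taken in the sources (and available inside this paper via Lemma \ref{le2,3}) is subgraph interlacing with $H=K_{1,\Delta}$, since $\mu_1(K_{1,\Delta})=\Delta+1$; that gives the inequality instantly but requires a separate argument for the equality case. Your explicit test vector $x_v=\Delta$, $x_{u}=-1$ on $N(v)$, $0$ elsewhere buys both at once: the Rayleigh quotient equals $(\Delta+1)+t/(\Delta(\Delta+1))$ with $t$ the number of edges from $N(v)$ to the outside, and your observation that connectivity with $\Delta<n-1$ forces $t\ge 1$ (the boundary edge cannot be incident with $v$) correctly yields strictness, while $\Delta=n-1$ reduces to the upper-bound equality case since $v$ is then isolated in $\overline G$. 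I see no gap.
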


\begin{lemma}{\bf (\cite{T})}\label{le2,2} 
Let $A$ be an $m\times n$ matrix with singular values $\alpha_1\geq \alpha_2\geq \cdots \geq \alpha_{\min\{m,\,n\}}$. Let $B$ be a $p\times q$ submatrix of $A$, with singular values $\beta_1\geq \beta_2\geq \cdots \geq \beta_{\min\{p,\,q\}}$. Then
$\alpha_i\geq \beta_i\geq \alpha_{i+(m-p)+(n-q)}$
for $i=1, 2, \ldots, \min\{p,\,q\}$ and $i\leq \min\{p+q-m, p+q-n\}$.
\end{lemma}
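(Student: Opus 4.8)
The plan is to reduce this singular-value interlacing statement to the Cauchy interlacing theorem for Hermitian matrices, using the Jordan--Wielandt dilation. Put $r=\min\{m,n\}$, $s=\min\{p,q\}$ and $t=(m-p)+(n-q)=(m+n)-(p+q)\ge 0$. First I would pass to the Hermitian matrix
$$\widetilde A=\begin{pmatrix} 0 & A\\ A^{T} & 0\end{pmatrix}$$
of order $m+n$ and recall the standard fact that its eigenvalues, listed in non-increasing order as $\lambda_1(\widetilde A)\ge\cdots\ge\lambda_{m+n}(\widetilde A)$, are $\alpha_1,\dots,\alpha_r$, followed by $m+n-2r$ zeros, followed by $-\alpha_r,\dots,-\alpha_1$; in particular $\lambda_i(\widetilde A)=\alpha_i$ for $1\le i\le r$. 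Applying the same construction to $B$ gives $\widetilde B=\begin{pmatrix} 0 & B\\ B^{T} & 0\end{pmatrix}$, whose eigenvalues satisfy $\lambda_j(\widetilde B)=\beta_j$ for $1\le j\le s$.

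The structural point is that $\widetilde B$ is, up to a symmetric permutation of indices, a principal submatrix of $\widetilde A$. Indeed, if $B=A[R,C]$ is the submatrix of $A$ on the row set $R$ with $|R|=p$ and the column set $C$ with $|C|=q$, then the principal submatrix of $\widetilde A$ indexed by $R$ among the first $m$ indices and by $C$ among the last $n$ indices has block form $\begin{pmatrix} 0 & A[R,C]\\ (A[R,C])^{T} & 0\end{pmatrix}=\widetilde B$. Since $\widetilde B$ is then an order-$(p+q)$ principal submatrix of the Hermitian matrix $\widetilde A$ of order $m+n$, the Cauchy interlacing theorem gives
$$\lambda_k(\widetilde A)\ \ge\ \lambda_k(\widetilde B)\ \ge\ \lambda_{k+t}(\widetilde A)\qquad\text{for }1\le k\le p+q.$$

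It remains to read the two required inequalities off this chain while keeping all indices in range; this bookkeeping is where the hypotheses of the lemma arise and is the main (essentially the only) obstacle in the argument. For $1\le i\le s$ one has $i\le r$, so the left inequality gives $\alpha_i=\lambda_i(\widetilde A)\ge\lambda_i(\widetilde B)=\beta_i$, which is the asserted $\alpha_i\ge\beta_i$ for $i=1,\dots,\min\{p,q\}$. For the right inequality, the chain gives $\beta_i=\lambda_i(\widetilde B)\ge\lambda_{i+t}(\widetilde A)$, and here $\lambda_{i+t}(\widetilde A)$ equals $\alpha_{i+t}$ exactly when the index $i+t$ still points into the block of singular values (rather than into the structural zeros or the negatives), i.e.\ when $i+t\le r$. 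Using $\min\{m,n\}+\max\{m,n\}=m+n$ one checks that $i+t\le r$ is equivalent to $i\le r-t=(p+q)-\max\{m,n\}=\min\{p+q-m,\,p+q-n\}$, which is precisely the stated range; moreover, since $p+q-m\le q$ and $p+q-n\le p$, this condition already forces $i\le s$. On that range one obtains $\beta_i\ge\alpha_{i+t}=\alpha_{i+(m-p)+(n-q)}$, as required. Every other ingredient --- the spectral description of the dilation $\widetilde A$, the identification of $\widetilde B$ as a principal submatrix, and Cauchy interlacing itself (which can be quoted directly or derived in one line from the Courant--Fischer min--max formula) --- is routine.
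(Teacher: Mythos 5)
Your proof is correct. Note first that the paper itself gives no argument for this statement: it is quoted verbatim as a known result of Thompson (reference [T]), so there is no internal proof to compare against. Your route --- embedding $A$ and $B$ in the Hermitian dilations $\widetilde A=\begin{pmatrix}0&A\\ A^{T}&0\end{pmatrix}$ and $\widetilde B=\begin{pmatrix}0&B\\ B^{T}&0\end{pmatrix}$, observing that $\widetilde B$ is (after a symmetric permutation) a principal submatrix of $\widetilde A$ of codimension $t=(m-p)+(n-q)$, and then invoking Cauchy interlacing --- is a clean and standard way to obtain the singular-value interlacing, whereas Thompson's original paper argues directly with variational characterizations of singular values and in fact proves sharper families of inequalities. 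Your index bookkeeping is the delicate part and it checks out: $\lambda_i(\widetilde A)=\alpha_i$ for $i\le\min\{m,n\}$, $\lambda_i(\widetilde B)=\beta_i$ for $i\le\min\{p,q\}$, the left inequality needs only $i\le\min\{p,q\}\le\min\{m,n\}$, and for the right inequality the requirement $i+t\le\min\{m,n\}$ is, as you compute, equivalent to $i\le (p+q)-\max\{m,n\}=\min\{p+q-m,\,p+q-n\}$, which also forces $i\le\min\{p,q\}$ since $p+q-m\le q$ and $p+q-n\le p$; this is exactly the range stated in the lemma. So the proposal is a complete and correct self-contained proof of the cited result.
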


\begin{lemma}{\bf (\cite{BH})}\label{le2,3} 
Let $G$ be a graph and let $H$ be a (not necessarily induced) subgraph of $G$ with $p$ vertices. Then $\mu_i(G)\geq \mu_i(H)$ for $1\leq i\leq p$ .
\end{lemma}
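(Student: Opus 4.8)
The plan is to assemble $G$ from $H$ by two elementary operations whose effect on the Laplacian spectrum is transparent: first padding $H$ with isolated vertices until it shares a vertex set with $G$, and then inserting, one at a time, the edges of $G$ that are missing from $H$.

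First I would put $n=|V(G)|$ and let $H_0$ be $H$ together with $n-p$ extra isolated vertices, so that $V(H_0)=V(G)$ while $E(H_0)=E(H)$. The Laplacian splits as $L(H_0)=L(H)\oplus 0_{n-p}$, so the spectrum of $L(H_0)$ is that of $L(H)$ with $n-p$ additional zeros adjoined. Since $L(H)$ is positive semidefinite with least eigenvalue $\mu_p(H)=0$, adjoining zeros does not move any of the $p$ largest eigenvalues, that is, $\mu_i(H_0)=\mu_i(H)$ for all $1\le i\le p$. This is precisely where the hypothesis $i\le p$ enters: for $i>p$ the symbol $\mu_i(H)$ is undefined, and padding really does change the bottom of the spectrum.

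Second, on the common vertex set $V(G)$ we have $E(H_0)=E(H)\subseteq E(G)$, so I would list $E(G)\setminus E(H_0)=\{e_1,\dots,e_t\}$ and walk up the chain $H_0\subseteq H_0+e_1\subseteq\cdots\subseteq H_0+e_1+\cdots+e_t=G$. Adding a single edge $e=uv$ to a graph on a fixed vertex set alters its Laplacian by $(\mathbf e_u-\mathbf e_v)(\mathbf e_u-\mathbf e_v)^{T}$, a rank-one positive semidefinite matrix, where $\mathbf e_u,\mathbf e_v$ denote standard basis vectors. By Weyl's monotonicity inequality for symmetric matrices---adding a positive semidefinite matrix cannot decrease any eigenvalue---each $\mu_i$ is non-decreasing along the chain. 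Hence $\mu_i(G)\ge\mu_i(H_0)=\mu_i(H)$ for every $i$, in particular for $1\le i\le p$, which is the assertion.

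I do not expect a genuine obstacle here; the only point requiring attention is the bookkeeping of eigenvalue indices across the change of dimension produced by the isolated vertices, which is exactly the reason the conclusion is stated only for $i\le p$. An equivalent argument that sidesteps Weyl's inequality is to apply the Courant--Fischer min--max formula to $\mu_i(H)$ over $i$-dimensional subspaces of $\mathbb R^{V(H)}$, viewed inside $\mathbb R^{V(G)}$ by zero-extension, together with the pointwise bound $x^{T}L(G)x=\sum_{uv\in E(G)}(x_u-x_v)^2\ge\sum_{uv\in E(H)}(x_u-x_v)^2=x^{T}L(H)x$; this delivers the same inequality directly.
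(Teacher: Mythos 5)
Your proof is correct; the paper itself gives no argument for this lemma, citing it directly from Brouwer--Haemers, and your route (pad $H$ with isolated vertices, note that $\mu_p(H)=0$ so the top $p$ eigenvalues are unaffected, then add the missing edges one at a time and use that each edge contributes the positive semidefinite rank-one term $(\mathbf e_u-\mathbf e_v)(\mathbf e_u-\mathbf e_v)^{T}$, hence Weyl monotonicity) is exactly the standard proof of that cited result. The alternative Courant--Fischer argument you sketch via $x^{T}L(G)x\geq x^{T}L(H)x$ on zero-extended vectors is equally valid and needs no further detail.
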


\begin{lemma}{\bf (\cite{LP})}\label{le2,4} 
Let $G$ be a connected graph on $n\geq 3$ vertices, with vertex degrees $d_1(G)\geq d_2(G)\geq \cdots \geq d_n(G)$. Then $\mu_2(G)\geq d_2(G)$.
\end{lemma}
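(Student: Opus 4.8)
Proof proposal for Lemma 2.4 (the final statement: $\mu_2(G)\geq d_2(G)$ for a connected graph $G$ on $n\geq 3$ vertices).

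The cleanest route is to exhibit a subgraph $H$ of $G$ whose second Laplacian eigenvalue is at least $d_2(G)$, and then invoke Lemma~\ref{le2,3}. The natural candidate is built from the two vertices $v_1,v_2$ realizing the two largest degrees $d_1(G)\geq d_2(G)$. First I would argue one may assume $d_2(G)\geq 1$ (otherwise the bound is trivial) and in fact that $d_2(G)\geq 2$, since if $d_2(G)=1$ then $G$ contains an edge, hence $K_2$ as a subgraph, and $\mu_1(K_2)=2>1$, while $\mu_2$ can be handled by noting $G$ is connected on $n\geq 3$ vertices so it contains $P_3$, and $\mu_2(P_3)=1=d_2(G)$. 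For the main case, I would take $H$ to be the subgraph consisting of all edges incident to $v_1$ together with all edges incident to $v_2$ — that is, the union of two stars centered at $v_1$ and $v_2$. This $H$ is a ``double-broom''-type graph (two stars, possibly sharing the edge $v_1v_2$ and possibly sharing leaves), and the point is that $H$ has two vertices of degree $d_1(G)$ and $d_2(G)$ respectively.

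The key computation is then a lower bound on $\mu_2(H)$. I would use the interlacing/edge-version of Lemma~\ref{le2,3} or, more directly, the fact that $\mu_i$ is monotone under adding edges combined with an explicit subgraph: inside $H$, delete enough edges so that what remains is a disjoint union (or near-disjoint union) of two stars $K_{1,d_1(G)-1}$ and $K_{1,d_2(G)-1}$ — if $v_1,v_2$ are non-adjacent and share no neighbours in $H$ this is already a disjoint union $K_{1,d_1}\sqcup K_{1,d_2}$, whose Laplacian spectrum is $\{d_1+1, d_2+1, 1^{(d_1+d_2-2)}, 0, 0\}$, giving $\mu_2 = d_2+1 > d_2$. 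The remaining work is to handle the cases where the two stars overlap (sharing the central edge, or sharing leaves), and to show that such overlaps only help — i.e. that the resulting graph still has $\mu_2 \geq d_2(G)$. This can be done either by a direct eigenvalue estimate (e.g. exhibiting a $2$-dimensional subspace on which the Rayleigh quotient of $L(H)$ is $\geq d_2(G)$, using the Courant–Fischer characterization $\mu_2(H)=\max_{\dim S = 2}\min_{0\neq x\in S} \frac{x^TL(H)x}{x^Tx}$), or by observing that contracting/identifying shared structure corresponds to passing to a quotient whose eigenvalues still dominate.

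The main obstacle I anticipate is precisely this overlap analysis: when $v_1$ and $v_2$ are adjacent and/or have common neighbours, the ``two stars'' coalesce and one must be careful that the second eigenvalue does not drop below $d_2(G)$. The robust way to dispatch it is the variational one. Let $N_1, N_2$ be the (private) neighbourhoods used for each star and pick the test space $S=\mathrm{span}\{x,y\}$ where $x$ is supported on $\{v_1\}\cup N_1$ with $x(v_1)$ large and negative and $x$ equal to a small positive constant on $N_1$ (the Fiedler-type vector of the star at $v_1$, rescaled), and similarly $y$ for $v_2$; choosing the supports to be disjoint (which is possible after discarding at most one shared vertex/edge, costing nothing asymptotically, and can be made exact with a small perturbation argument) makes $x,y$ orthogonal and makes $\min_{0\neq z\in S}\frac{z^TL(H)z}{z^Tz}$ computable as the smaller of the two star Rayleigh quotients, each of which is at least $d_i(G)$. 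Pushing this through the boundary cases ($n=3$, $d_2(G)\in\{1,2\}$, $v_1\sim v_2$) is routine but needs to be stated; the inequality is tight, witnessed for instance by $G=P_3$ where $\mu_2(P_3)=1=d_2(P_3)$, so no slack is available and the argument cannot afford to lose a constant.
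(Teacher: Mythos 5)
First, note that the paper does not prove this statement at all: Lemma~\ref{le2,4} is quoted from Li and Pan \cite{LP}, so there is no in-paper argument to compare against; your proposal therefore has to stand on its own, and as written it does not.

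The genuine gap is exactly the overlap case you flag, and your proposed fix does not close it. If $v_1\sim v_2$ and/or they have common neighbours, the reduction to two vertex-disjoint stars is not available ``after discarding at most one shared vertex/edge'': the two vertices can share up to $\min\{d_1,d_2\}$ neighbours, so after splitting the common neighbourhood the ``private'' stars can have sizes $s_1,s_2$ far below $d_1,d_2$ (in $K_4$, for instance, each private star is a single edge while $d_2=3$). Your variational repair inherits the same loss: a star-type test vector supported on $\{v_i\}\cup N_i$ with $|N_i|=s_i$ has Rayleigh quotient of order $s_i+1$ plus boundary terms (for $s_i=1$ it is $(d_i+3)/2$, which is below $d_i$ as soon as $d_i>3$), so the minimum over your two-dimensional test space need not reach $d_2$. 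Even the most natural exact choice, $S=\mathrm{span}\{e_{v_1},e_{v_2}\}$, only yields the smaller eigenvalue of $\bigl(\begin{smallmatrix} d_1 & -1\\ -1 & d_2\end{smallmatrix}\bigr)$ when $v_1\sim v_2$, which is strictly less than $d_2$ for all $d_1\geq d_2$; and since, as you yourself observe, the inequality is tight (e.g.\ $P_3$), no ``small perturbation'' or asymptotic argument can recover a lost constant. So the non-adjacent, disjoint-neighbourhood case is fine (and is handled even more simply by the test space $\mathrm{span}\{e_{v_1},e_{v_2}\}$, which needs no subgraph at all), but the adjacent/common-neighbour case — which is precisely where Li and Pan's published argument does its real work, exploiting the shared structure rather than discarding it — is left unproved in your sketch. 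To complete the proof you would need a different mechanism in that case, e.g.\ an exact spectral analysis of the double-star-with-common-neighbours subgraph, or a Courant--Fischer test space that uses the common neighbours and the edge $v_1v_2$ rather than private leaves only.
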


\begin{lemma}{\bf (\cite{M1})}\label{le2,5} 
Let $G$ be a threshold graph on $n$ vertices with conjugate degree sequence $d^{*}=(d_1^*(G), d_2^*(G), \ldots, d_n^*(G))$. Then the Laplacian eigenvalue $\mu_i(G)=d_i^*(G)=d_i(G)+1$, $1\leq i \leq n-1$.
\end{lemma}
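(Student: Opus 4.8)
The plan is to argue by induction on $n$, following the recursive construction of a threshold graph; the substantive point is the identity $\mu_i(G)=d_i^*(G)$, while $d_i^*(G)=d_i(G)+1$ is a combinatorial property of the degree sequence that comes out of the same recursion. Two standard facts about the Laplacian spectrum will be used. First, the spectrum of a disjoint union $G_1\cup G_2$ is the multiset union of the spectra of $G_1$ and $G_2$. Second, if $H=G\vee K_1$ is obtained from a graph $G$ on $p$ vertices by adding a single dominating vertex, then the Laplacian eigenvalues of $H$ are $p+1$, together with $\mu_1(G)+1,\dots,\mu_{p-1}(G)+1$, together with $0$. I would prove the second fact first, as a short preliminary: since $\overline{G\vee K_1}=\overline G\cup K_1$ (writing $\overline H$ for the complement of $H$), it reduces to the first fact applied to complements, using $\mu_i(\overline F)=|V(F)|-\mu_{|V(F)|-i}(F)$ for $1\le i\le |V(F)|-1$ and $\mu_{|V(F)|}(\overline F)=0$.

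For the induction, the base case $n=1$ is immediate, since $\mu_1(K_1)=0=d_1^*(K_1)$. For $n\ge 2$ we may write $G=G'\cup K_1$ or $G=G'\vee K_1$, where $G'$ is a threshold graph on $n-1$ vertices for which the statement is already known. If $G=G'\cup K_1$, then adjoining the isolated vertex appends a single $0$ to the spectrum and leaves $d_i^*$ unchanged for $1\le i\le n-1$, with $d_{n-1}^*(G)=d_{n-1}^*(G')=0$ since $G'$ has only $n-1$ vertices; hence $\mu_i(G)=\mu_i(G')=d_i^*(G')=d_i^*(G)$ for $1\le i\le n-2$ and $\mu_{n-1}(G)=0=d_{n-1}^*(G)$. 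If $G=G'\vee K_1$, a short computation gives $d_1^*(G)=n$ and $d_i^*(G)=1+d_{i-1}^*(G')$ for $2\le i\le n-1$, which is precisely the operation that the preliminary fact performs on the spectrum: prepend $n$ and add $1$ to each of the remaining eigenvalues of $G'$. Because $\mu_1(G')\le n-1$, the new eigenvalue $n$ is the largest, so $\mu_1(G)=n=d_1^*(G)$, and for $2\le i\le n-1$ we obtain $\mu_i(G)=\mu_{i-1}(G')+1=d_{i-1}^*(G')+1=d_i^*(G)$ by the inductive hypothesis. This finishes the induction for the equality $\mu_i(G)=d_i^*(G)$.

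The main obstacle I anticipate is the dominating-vertex case just described: one has to derive the spectral formula for $G\vee K_1$ carefully, re-sort both the eigenvalue list and the degree sequence of $G'\vee K_1$, and confirm that the index shift appearing in the conjugate degree sequence is exactly the shift that the spectral formula applies to the eigenvalues (in particular that the added vertex supplies both the eigenvalue $n$ and the first coordinate $d_1(G)=n-1$, and that nothing overtakes them). The remaining equality $d_i^*(G)=d_i(G)+1$ expresses a symmetry of the Young diagram of a threshold degree sequence and is read off from the construction by the same two-case bookkeeping -- adjoining an isolated vertex does not change $d_i$ for $1\le i\le n-1$, while adjoining a dominating vertex gives $d_1(G)=n-1$ and $d_i(G)=d_{i-1}(G')+1$ for $2\le i\le n$ -- the one place deserving attention being the behaviour of this identity at the last index when an isolated vertex is adjoined.
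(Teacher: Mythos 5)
The paper itself contains no proof of this lemma: it is quoted from Merris [M1], whose theorem is exactly the first equality, namely that the Laplacian spectrum of a threshold (degree maximal) graph is the conjugate of its degree sequence. Your induction for that part is correct and is essentially the standard argument: the disjoint-union fact, the dominating-vertex fact obtained via complementation and $\mu_i(\overline{F})=|V(F)|-\mu_{|V(F)|-i}(F)$, and the bookkeeping $d_i^*(G'\cup K_1)=d_i^*(G')$, $d_1^*(G'\vee K_1)=n$, $d_i^*(G'\vee K_1)=1+d_{i-1}^*(G')$ do combine to give $\mu_i(G)=d_i^*(G)$ for $1\le i\le n-1$, with the sorting issues handled as you indicate ($\mu_1(G')\le n-1$, and the appended $0$ stays at the bottom).

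The genuine gap is the second equality, which you dismiss as ``the same two-case bookkeeping'': the claim $d_i^*(G)=d_i(G)+1$ for all $1\le i\le n-1$ is false for threshold graphs, so no bookkeeping can establish it. Your own isolated-vertex case already exposes this: there $d_{n-1}^*(G)=0$ while $d_{n-1}(G)+1\ge 1$, so the identity fails at the last index as soon as an isolated vertex is adjoined (e.g. $K_2\cup K_1$ at $i=2$), and the failure persists through later dominating-vertex steps; for the star $K_{1,3}$ one has $\mu_2=d_2^*=1$ but $d_2+1=2$. The correct relation is $d_i^*=d_i+1$ only for the indices $i$ with $d_i\ge i$ (up to the trace of the degree sequence); beyond that range only $\mu_i=d_i^*$ survives. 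So the defect is not in your spectral argument but in treating the lemma's second equality as a routine consequence of the recursion: it is an overstatement in the paper's formulation (the cited source asserts only $\mu_i=d_i^*$), and the spot you flagged as ``deserving attention at the last index'' is precisely where the claim breaks, irreparably.
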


\begin{lemma}{\bf (\cite{YLT})}\label{le2,6} 
Let $\mathcal{T}_n$ the set of trees on $n$ vertices. Then
$$\mu_1(T_n)<\mu_1(S_{3,\, n-5})<\mu_1(T_n^4)<\mu_1(T_n^3)<\mu_1(S_{2,\, n-4})<\mu_1(S_{1,\, n-3})<\mu_1(K_{1,\,n-1})$$
for $T_n\in \mathcal{T}_n\setminus \{K_{1,\,n-1}, S_{1,\, n-3}, S_{2,\, n-4}, T_n^3, T_n^4, S_{3,\, n-5} \}$ and $T_n^i$ $(i=3, 4)$ shown in Fig. 2.1, where $\mu_1(S_{1,\, n-3})$, $\mu_1(S_{2,\, n-4})$, respectively, are the largest root of the following equations:
\begin{eqnarray*}
x^3-(n+2)x^2+(3n-2)x-n & = & 0,\\
x^3-(n+2)x^2+(4n-7)x-n & = & 0.
\end{eqnarray*}
\end{lemma}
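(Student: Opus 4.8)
The plan is to reduce the statement to ordering a short explicit list of trees: first pin down $K_{1,\,n-1}$ via Lemma~\ref{le2,1}, then use grafting transformations to discard all but finitely many competitors, and finally order the survivors through the characteristic polynomials of their equitable quotient matrices. For $K_{1,\,n-1}$: by Lemma~\ref{le2,1}, $\mu_1(T)\le n$ for every tree $T$ on $n$ vertices, with equality iff the complement of $T$ is disconnected, i.e. $T=G_1\vee G_2$ with $|V(G_1)|=p$, $|V(G_2)|=q$; since a tree has only $n-1$ edges, $pq\le p+q-1$, which forces $p=1$ or $q=1$, hence $T=K_{1,\,n-1}$. Thus $\mu_1(T)<n=\mu_1(K_{1,\,n-1})$ for every other tree, and each remaining tree in the statement has $\Delta\le n-2$.

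Next I would classify trees by $\Delta$ and graft toward high-degree vertices. Since a tree is bipartite, $\mu_1(T)$ equals the signless Laplacian spectral radius of $T$, so one may work with the positive Perron vector $x$ of $D(T)+A(T)$; the basic move is that relocating a pendant edge (or an entire pendant branch) across an edge $uv$ toward the endpoint with the larger Perron entry strictly increases $\mu_1$ --- positivity of $x$ makes the sign of the Rayleigh-quotient increment immediate, and a pendant $w$ at $u$ satisfies $x_w=x_u/(\mu_1(T)-1)<x_u$. A tree with $\Delta=n-2$ is necessarily $S_{1,\,n-3}$; a tree with $\Delta=n-3$ is exactly one of three trees, namely $S_{2,\,n-4}$, the tree $T_n^3$ whose maximum-degree vertex has two neighbours each carrying a pendant, and the tree $T_n^4$ whose maximum-degree vertex begins a pendant path of length two (as drawn in Fig.~2.1). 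Iterating the grafting move shows that any tree with $\Delta\le n-4$ can be transformed, with $\mu_1$ increasing at each nontrivial step, into $S_{3,\,n-5}$, so $\mu_1(T)<\mu_1(S_{3,\,n-5})$ for all such trees, with equality only for $S_{3,\,n-5}$; the same idea identifies $S_{2,\,n-4}$ as the $\mu_1$-maximizer among the three trees of maximum degree $n-3$. This leaves only $K_{1,\,n-1}$, $S_{1,\,n-3}$, $S_{2,\,n-4}$, $T_n^3$, $T_n^4$, $S_{3,\,n-5}$ to be ordered.

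For the ordering I would use equitable partitions. For a double star $S_{a,\,b}$ with $a+b=n-2$, partitioning $V(S_{a,\,b})$ into the pendants at one centre, that centre, the other centre, and its pendants is equitable, and the non-zero eigenvalues of the Laplacian quotient matrix are the roots of
$$x^3-(n+2)x^2+\bigl((a+1)(b+2)+n+1-a\bigr)x-n=0;$$
for $a=1,2,3$ this yields the two cubics in the statement together with $x^3-(n+2)x^2+(5n-14)x-n=0$ for $S_{3,\,n-5}$. Evaluating each cubic at $n-3,n-2,n-1,n$ localizes $\mu_1(S_{1,\,n-3})\in(n-1,n)$, $\mu_1(S_{2,\,n-4})\in(n-2,n-1)$ and $\mu_1(S_{3,\,n-5})\in(n-3,n-2)$, while $\mu_1(K_{1,\,n-1})=n$; hence for $n$ large enough these four are strictly decreasing in the stated order, the finitely many small $n$ being checked directly. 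For $T_n^3$ and $T_n^4$ the non-zero eigenvalues of the respective Laplacian quotient matrices are the roots of the cubic $x^3-(n+1)x^2+(3n-5)x-n$ and of the quartic $x^4-(n+3)x^3+(5n-4)x^2-(6n-10)x+n$; both largest roots lie in $(n-2,n-1)$, hence strictly between $\mu_1(S_{3,\,n-5})$ and $\mu_1(S_{2,\,n-4})$, and a short sign analysis of the difference polynomials on $(0,\infty)$ (for instance the cubic of $S_{2,\,n-4}$ minus that of $T_n^3$ equals $x(n-2-x)$) pins down $\mu_1(T_n^4)<\mu_1(T_n^3)<\mu_1(S_{2,\,n-4})$.

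The hard part is the classification-and-grafting step: one must organize the relocation moves so that the full partial order on $\mathcal{T}_n$ collapses to exactly the claimed chain --- in particular, that none of the unboundedly many trees with $\Delta\le n-4$ can overtake $T_n^4$, and that precisely the three trees of maximum degree $n-3$ occupy the three positions between $S_{1,\,n-3}$ and $S_{3,\,n-5}$. Passing to the signless Laplacian keeps the Perron vector positive and the per-move computation routine, but one still has to make sure every tree is driven to the correct member of the six-element list; once that reduction is in place, the polynomial comparisons are elementary.
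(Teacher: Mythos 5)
First, note that the paper does not prove this statement at all: Lemma~\ref{le2,6} is quoted verbatim from the cited reference [YLT] (Yu--Lu--Tian), so your proposal can only be measured against the literature, not against an in-paper argument. Your finite part is in good shape: the identification of $K_{1,\,n-1}$ via Lemma~\ref{le2,1} together with the edge-count argument for the join is correct; the classification of the trees with $\Delta=n-2$ and $\Delta=n-3$ is correct; the double-star cubic $x^3-(n+2)x^2+(ab+2n+1)x-n$ agrees with the paper's own computation in the proof of Theorem~\ref{th5,2} and specializes correctly to the two equations in the statement and to $5n-14$ for $S_{3,\,n-5}$; the cubic you give for $T_n^3$ and the comparison $f_{S_{2,\,n-4}}(x)-f_{T_n^3}(x)=x(n-2-x)$ check out, and the root-localization scheme does order the six named trees for large $n$.

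The genuine gap is the reduction step that you yourself flag as ``the hard part'': the claim that iterating the pendant-relocation move turns every tree with $\Delta\le n-4$ into $S_{3,\,n-5}$ with $\mu_1$ strictly increasing at each step. As stated this does not work. The grafting lemma only increases $\mu_1$ when you move a branch toward the endpoint with the \emph{larger} Perron entry of the current tree, so the admissible direction is dictated by the evolving Perron vector, not chosen by you; generically such chains are driven toward $K_{1,\,n-1}$ (or at least past trees with $\Delta\ge n-3$), which only yields $\mu_1(T)<\mu_1(K_{1,\,n-1})$ and not the needed sharper bound $\mu_1(T)<\mu_1(S_{3,\,n-5})$. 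To conclude the latter you would need a $\mu_1$-increasing chain that \emph{terminates exactly at} $S_{3,\,n-5}$, and nothing in the proposal guarantees such a choreography exists for an arbitrary tree with $\Delta\le n-4$ (consider, e.g., $S_{4,\,n-6}$ or long brooms, where the natural monotone moves immediately create a vertex of degree $n-3$). This is precisely the substantive content of the cited theorem, and in [YLT] it is handled not by a single grafting scheme but by supplementary tools -- upper bounds of the type $\mu_1(T)\le\max_v\{d_v+m_v\}$, characteristic-polynomial comparisons for parametric families, and case analysis on $\Delta$. Until that step is carried out (or replaced by such bounds), the proposal establishes the ordering only among the six listed trees, not the inequality $\mu_1(T_n)<\mu_1(S_{3,\,n-5})$ for all remaining trees, which is the heart of the lemma.
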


\begin{lemma}{\bf (\cite{GZW})}\label{le2,7} 
For any tree $T_n$ with $n\geq 4$ vertices, $S_2(L(T_n))\leq S_2(L(S_{\lceil\frac{n-2}{2}\rceil, \, \lfloor\frac{n-2}{2}\rfloor}))$. The equality holds
if and only if $T_n\cong S_{\lceil\frac{n-2}{2}\rceil, \, \lfloor\frac{n-2}{2}\rfloor}$.
\end{lemma}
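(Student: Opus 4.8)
\proof The plan is to reduce the maximisation of $S_2(L(T_n))$ over all trees on $n$ vertices to a maximisation over double stars, and then to settle the double-star case by computing the relevant Laplacian eigenvalues explicitly. Write a double star on $n$ vertices as $S_{p,q}$ with $p\ge q\ge 1$ and $p+q=n-2$. Using the equitable partition of $V(S_{p,q})$ into the two centres and their two classes of pendant vertices, one sees that $0$ is a simple Laplacian eigenvalue, that $1$ is a Laplacian eigenvalue of multiplicity $n-4$, and that the remaining three eigenvalues $\mu_1(S_{p,q})\ge\mu_2(S_{p,q})\ge\gamma_{p,q}$ are the roots of a cubic $f_{p,q}$. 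I would recover the coefficients of $f_{p,q}$ from $\operatorname{tr}L(S_{p,q})=2(n-1)$, from $\operatorname{tr}\bigl(L(S_{p,q})^{2}\bigr)=Z_1(S_{p,q})+2(n-1)$, and from the fact that the product of the nonzero Laplacian eigenvalues of a tree on $n$ vertices equals $n$ (matrix--tree theorem), obtaining
$$f_{p,q}(x)=x^{3}-(n+2)x^{2}+\bigl(pq+2n+1\bigr)x-n .$$
Since $f_{p,q}(0)=-n<0$, $f_{p,q}(1)=pq>0$, and the three roots sum to $n+2>3$, the smallest root $\gamma_{p,q}$ lies in $(0,1)$ while the other two exceed $1$; hence $\mu_2(S_{p,q})$ is the middle root and $S_2(L(S_{p,q}))=(n+2)-\gamma_{p,q}$.

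Next, since $\partial f_{p,q}(x)/\partial(pq)=x>0$ for $x\in(0,1)$, raising the value of $pq$ lifts $f_{p,q}$ pointwise on $(0,1)$ and so moves its smallest root to the left; thus $\gamma_{p,q}$ strictly decreases, and $S_2(L(S_{p,q}))$ strictly increases, with $pq$. Under $p+q=n-2$ and $p\ge q\ge 1$, the product $pq$ is maximal exactly at $\{p,q\}=\{\lceil\frac{n-2}{2}\rceil,\lfloor\frac{n-2}{2}\rfloor\}$, which singles out the asserted extremal double star; and $\gamma_{p,q}<1$ yields $S_2(L(S_{p,q}))>n+1=S_2(L(K_{1,\,n-1}))$, so the star is strictly dominated.

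It then remains to show that no tree beats the best double star. Let $T$ be a tree on $n\ge 4$ vertices, and let $k$ be the number of its vertices of degree at least $2$. If $k\le 1$ then $T\cong K_{1,\,n-1}$, already handled; if $k=2$ then the two vertices of degree $\ge 2$ are adjacent (any vertex strictly between them on their connecting path would itself have degree $\ge 2$), so $T$ is a double star. In the remaining case $k\ge 3$ it would suffice to produce a tree $T'$ on $n$ vertices with exactly $k-1$ vertices of degree $\ge 2$ and with $S_2(L(T'))\ge S_2(L(T))$: iterating lands on a double star, and the monotonicity in $pq$ finishes the comparison, the equality statement following once this step is shown strict unless $T$ is already a double star. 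The candidate: the subgraph of $T$ induced by the vertices of degree $\ge 2$ is a tree with at least two leaves, so choose such a leaf $v$ (suitably, e.g.\ one with the fewest pendant neighbours), with unique internal neighbour $w$ and pendant neighbours $x_1,\dots,x_t$ ($t\ge 1$); let $T'$ arise from $T$ by deleting $vx_1,\dots,vx_t$ and adding $wx_1,\dots,wx_t$, so that $v$ becomes pendant and $k$ drops by one.

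The hard part will be the inequality $S_2(L(T'))\ge S_2(L(T))$. I would attack it via the Ky Fan maximum principle $S_2(L(H))=\max\{x^{\top}L(H)x+y^{\top}L(H)y:\ x\perp y,\ \|x\|=\|y\|=1\}$ together with the exact identity
$$L(T')-L(T)=\sum_{i=1}^{t}(e_{w}-e_{x_i})(e_{w}-e_{x_i})^{\top}-\sum_{i=1}^{t}(e_{v}-e_{x_i})(e_{v}-e_{x_i})^{\top},$$
feeding a carefully symmetrised modification of an optimal pair for $T$ (altered only on $\{v,w,x_1,\dots,x_t\}$, exploiting that $w$ is the heavier endpoint) into the quadratic form of $T'$. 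The delicate point is that such a move typically \emph{lowers} $\mu_2$ — for instance it sends $P_4$ to $K_{1,3}$, where $S_2$ strictly drops — so the argument must establish that the increase it forces in $\mu_1$ by concentrating degree at $w$ strictly outweighs that loss, which means controlling the two largest eigenvalues jointly rather than only the Perron value, and it is also here that the precise choice of $v$ matters. Should the transformation prove too delicate, a fallback is to bypass it and instead prove a direct upper bound on $S_2(L(T))$ for every tree with $k\ge 3$ internal vertices, and to check that this bound stays strictly below $(n+2)-\gamma^{*}$, where $\gamma^{*}$ is the root produced in the first part. \qed
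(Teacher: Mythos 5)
The paper never proves this lemma at all: it is quoted verbatim from Guan--Zhai--Wu \cite{GZW}, so there is no internal proof to compare with and your argument has to stand on its own. Its first half does: the cubic $x^{3}-(n+2)x^{2}+(pq+2n+1)x-n$ for the three non-trivial Laplacian eigenvalues of the double star $S_{p,q}$ is correct (it agrees with the characteristic polynomial the paper itself records in the proof of Theorem 5.2), the localisation of exactly one root $\gamma_{p,q}\in(0,1)$ is sound, and the monotonicity of $S_2=n+2-\gamma_{p,q}$ in $pq$ correctly identifies the balanced double star as the maximiser among double stars and shows it beats $K_{1,\,n-1}$.

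The gap is that the heart of the lemma --- every tree on $n$ vertices is dominated by a double star, with equality only for the balanced one --- is precisely the step you do not prove. You reduce it to the inequality $S_2(L(T'))\ge S_2(L(T))$ (with strictness needed for the equality characterisation) for your pendant-relocation move on a tree with at least three internal vertices, and then leave that inequality open: you note yourself that such a move can lower $\mu_2$ and even lower $S_2$ in the adjacent situation $P_4\to K_{1,3}$, you do not say how the ``carefully symmetrised'' test pair in the Ky Fan principle is to be constructed, how the choice of $v$ is actually used, or why the gain in $\mu_1$ must outweigh the loss in $\mu_2$; and the fallback you mention (a direct upper bound on $S_2$ for trees with $k\ge 3$ internal vertices beating $(n+2)-\gamma^{*}$) is not carried out either. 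As written, the proposal establishes the statement only within the class of double stars; the reduction from arbitrary trees, which is the actual content of the cited theorem, remains a conjecture inside your proof, so the argument is incomplete.
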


\begin{lemma}{\bf (\cite{LHS})}\label{le2,8} 
Let $G$ be a connected graph with $n\geq 12$. Then $\mu_1(G)+\mu_2(G)\geq 4+2\left(\cos\frac{\pi}{n}+\cos\frac{2\pi}{n}\right)$ with equality holding if and only if $G\cong P_n$.
\end{lemma}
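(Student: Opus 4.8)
The plan is to reduce the inequality to trees and then treat trees by a structural case analysis. For the reduction, observe that for any edge $e$ of $G$ we have $L(G)=L(G-e)+L_e$, where $L_e$ is the Laplacian of the single edge $e$, a rank-one positive semidefinite matrix; by Weyl's inequality $\mu_i(G)\geq\mu_i(G-e)$ for every $i$, so deleting edges down to a spanning tree $T$ of $G$ gives $\mu_1(G)+\mu_2(G)\geq\mu_1(T)+\mu_2(T)$. Since the Laplacian eigenvalues of $P_n$ are $2-2\cos\frac{k\pi}{n}$ for $0\leq k\leq n-1$, we have $\mu_1(P_n)+\mu_2(P_n)=4+2\bigl(\cos\frac{\pi}{n}+\cos\frac{2\pi}{n}\bigr)$. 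Hence it suffices to prove the bound for trees on $n\geq 12$ vertices, with equality only for $P_n$: if a connected non-tree $G$ attained equality, its spanning tree would have to be $P_n$, so $G$ would contain $P_n+e$ for some chord $e$; but either $P_n+e=C_n$, and a direct computation gives $\mu_1(C_n)+\mu_2(C_n)>4+2(\cos\frac{\pi}{n}+\cos\frac{2\pi}{n})$, or $P_n+e$ has a vertex of degree $3$, so Lemma~\ref{le2,1} and Lemma~\ref{le2,3} give $\mu_1(G)\geq 4>\mu_1(P_n)$, contradicting equality.

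Now let $T\neq P_n$ be a tree on $n\geq 12$ vertices, so $\Delta(T)\geq 3$. By Lemma~\ref{le2,1}, $\mu_1(T)\geq\Delta(T)+1\geq 4$, with equality for the connected graph $T$ only if $\Delta(T)=n-1$, i.e. $T=K_{1,n-1}$, where $\mu_1(T)=n>4$; so $\mu_1(T)>4$ always. I would split into subcases according to $\Delta(T)$ and the number of vertices of degree $\geq 3$. If $\Delta(T)\geq 5$, then either $T=K_{1,n-1}$, for which $\mu_1(T)+\mu_2(T)=n+1>8$, or $T$ has a second vertex of degree $\geq 2$, so $\mu_1(T)\geq 6$ and $\mu_2(T)\geq 2$ by Lemma~\ref{le2,4}, giving $\mu_1(T)+\mu_2(T)\geq 8>4+2(\cos\frac{\pi}{n}+\cos\frac{2\pi}{n})$. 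If $T$ has at least two vertices of degree $\geq 3$, then $\mu_2(T)\geq d_2(T)\geq 3$ by Lemma~\ref{le2,4}; moreover $T$ contains either two vertex-disjoint brooms $S(p,1,1)\cup S(q,1,1)$ (when the two branch vertices are non-adjacent), which forces $\mu_1(T),\mu_2(T)\geq 4$ via Lemma~\ref{le2,3}, or the double star $S_{2,2}$ (when they are adjacent), which forces $\mu_1(T)\geq\mu_1(S_{2,2})=\tfrac{5+\sqrt{17}}{2}$, to which one adds the bound on $\mu_2(T)$ furnished by the longest path of $T$ through Lemma~\ref{le2,3}; in every instance $\mu_1(T)+\mu_2(T)>4+2(\cos\frac{\pi}{n}+\cos\frac{2\pi}{n})$, the only borderline trees having small diameter and thus being double stars, which are handled directly by Lemma~\ref{le2,4}.

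The last and hardest case is $\Delta(T)\in\{3,4\}$ with exactly one vertex of degree $\geq 3$, i.e. $T$ is a spider $S(a_1,\dots,a_k)$ with $k\in\{3,4\}$ and legs $a_1\geq\cdots\geq a_k\geq 1$. Here I would bound $\mu_1(T)\geq\mu_1(S(a_k,\dots,a_k))$ by Lemma~\ref{le2,3} (truncating the longer legs) and $\mu_2(T)\geq\mu_2(P_{a_1+a_2+1})$ from the longest path. The crux is that $\mu_1$ of a long spider is a localized eigenvalue lying well above $4$: for $k=3$ it satisfies a transcendental equation of the shape $\tanh\bigl((a_k+\tfrac12)t\bigr)=3\tanh\tfrac{t}{2}$ with $\mu_1=2+2\cosh t$, so $\mu_1$ increases toward $2+\sqrt5\approx 4.236$ as $a_k$ grows, whereas the deficit of $\mu_2(P_{a_1+a_2+1})$ from $4$ is only $\Theta(a_1^{-2})$. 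Turning this into the inequality $\mu_1(T)+\mu_2(T)>4+2(\cos\frac{\pi}{n}+\cos\frac{2\pi}{n})$ for all admissible leg lengths requires a careful quantitative estimate of these spider eigenvalues through their Laplacian characteristic polynomials; this is the main obstacle, and it is precisely here that the hypothesis $n\geq 12$ is used (for $n\leq 11$ certain spiders, and $C_n$, have smaller $\mu_1+\mu_2$ than $P_n$). Once this estimate is in place, assembling the cases proves the inequality and identifies $P_n$ as the unique extremal graph.
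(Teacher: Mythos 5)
You should first be aware that the paper does not prove Lemma~\ref{le2,8} at all: it is quoted from \cite{LHS}, so your attempt has to stand entirely on its own. Its skeleton is sound: $\mu_i(G)\geq\mu_i(G-e)$ does reduce the inequality to spanning trees, $\mu_1(P_n)+\mu_2(P_n)=4+2\bigl(\cos\frac{\pi}{n}+\cos\frac{2\pi}{n}\bigr)$ is correct, the cycle and the large-degree cases are handled correctly, and in fact the four-leg spiders are easy (there $\mu_1\geq\Delta+1=5$ by Lemma~\ref{le2,1}, while the path through the two longest legs has at least $7$ vertices for $n\geq 12$, so $\mu_2\geq\mu_2(P_7)>3$ and the sum already exceeds $8$, whereas the target bound is below $8$). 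The genuine gap is that the case carrying all the content of the lemma --- trees whose $\mu_1$ is only barely above $4$ --- is not proved but postponed: you say yourself that the three-leg spiders ``require a careful quantitative estimate of these spider eigenvalues \dots this is the main obstacle.'' Until that estimate is actually carried out for all admissible leg lengths and all $n\geq 12$, the argument is a plan, not a proof. Moreover the one quantitative claim you do make there is internally inconsistent: your transcendental equation $\tanh\bigl((a_k+\tfrac12)t\bigr)=3\tanh\tfrac t2$ gives, as $a_k\to\infty$, $\tanh\tfrac t2=\tfrac13$, i.e. $e^t=2$ and $\mu_1\to\tfrac92$, not $2+\sqrt5$; the value $2+\sqrt5$ is the limit for the broom $S(a,1,1)$, a different spider shape.

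There is also a concrete flaw in the two-branch-vertex case. Your dichotomy ``non-adjacent branch vertices yield two vertex-disjoint copies of $K_{1,3}$'' is false in a tree: two non-adjacent vertices of degree $3$ may share a common neighbour (e.g.\ both adjacent to a middle vertex $w$, their remaining neighbours starting pendant paths), and then no two disjoint $K_{1,3}$'s exist; Lemma~\ref{le2,4} only gives $\mu_2\geq 3$, $\mu_1$ may be barely above $4$, and since the bound tends to $8$ the crude estimate $\mu_1+\mu_2\geq 7$ does not suffice. The escape clause ``the only borderline trees are double stars'' is asserted rather than proved, and these distance-two double spiders are not double stars. Both this configuration and the three-leg spiders can very likely be settled by exactly the kind of eigenvalue lower bounds you defer --- e.g.\ via Lemma~\ref{le2,3}, $\mu_1\geq\mu_1(S(2,2,1))\approx 4.3$ as soon as two legs have length at least $2$ and $\mu_1\geq\mu_1(S(2,2,2))\approx 4.4$ when all three do, combined with $\mu_2\geq\mu_2(P_{n-a_3})=2+2\cos\frac{2\pi}{n-a_3}$, and the case of shortest leg $1$ needs only $\mu_1>4$ --- but these verifications (including the small values $12\leq n\leq 16$, where the comparison is tight) are precisely what is missing, so the proposal does not yet establish the lemma.
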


\begin{lemma}{\bf (\cite{FXWL})}\label{le2,9} 
Let $T_n$ be a tree with $n\geq 5$ vertices. Then
$$spr(L(P_n))\leq spr(L(T_n))\leq spr(L(K_{1,\, n-1})).$$
The equality in the left hand side holds if and only if $T_n\cong P_n$, and the equality in the right hand side holds if and only if $T_n\cong K_{1,n-1}$.
\end{lemma}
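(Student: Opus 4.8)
The plan is to prove the two inequalities separately. Throughout, write $\overline{T_n}$ for the complement of $T_n$ and recall the standard identity $\mu_i(G)+\mu_{n-i}(\overline G)=n$ for $1\le i\le n-1$, which comes from $L(G)+L(\overline G)=nI_n-J$ ($J$ the all-ones matrix); in particular $\mu_1(G)=n-\mu_{n-1}(\overline G)$, $\mu_{n-1}(G)=n-\mu_1(\overline G)$, and $spr(L(G))=spr(L(\overline G))$.

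\emph{Upper bound.} Since $K_{1,n-1}=K_1\vee\overline{K_{n-1}}$ is a threshold graph with conjugate degree sequence $(n,1,\ldots,1,0)$, Lemma~\ref{le2,5} gives its Laplacian spectrum $\{n,1^{(n-2)},0\}$, so $spr(L(K_{1,n-1}))=n-1$. For an arbitrary tree $T_n$, Lemma~\ref{le2,1} gives $\mu_1(T_n)\le n$; if $\mu_1(T_n)\le n-1$ then, $T_n$ being connected, $\mu_{n-1}(T_n)>0$ and hence $spr(L(T_n))<n-1$. It remains to treat trees with $\mu_1(T_n)>n-1$, which by the Laplacian-spectral-radius ordering of trees (Lemma~\ref{le2,6}) form a short explicit list --- for $n\ge 6$ just $K_{1,n-1}$ and the double star $S_{1,n-3}$, since $\mu_1(S_{2,n-4})<n-1$ there (and for $n=5$ one checks the three trees directly). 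For $K_{1,n-1}$ equality holds; for $S_{1,n-3}$ (and the finitely many exceptions at $n=5$) the three nontrivial Laplacian eigenvalues are roots of an explicit cubic as in Lemma~\ref{le2,6}, and evaluating it at $x=n-1$ together with a root estimate yields $\mu_1-\mu_{n-1}<n-1$. Equivalently, the complement identity recasts $spr(L(T_n))\le n-1$ as $\mu_{n-1}(T_n)+\mu_{n-1}(\overline{T_n})\ge1$, with equality exactly when $\overline{T_n}$ is disconnected, i.e.\ $T_n\cong K_{1,n-1}$.

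\emph{Lower bound.} The Laplacian spectrum of $P_n$ is $\{\,2-2\cos(j\pi/n):0\le j\le n-1\,\}$, so $\mu_1(P_n)=2+2\cos\frac{\pi}{n}$, $\mu_{n-1}(P_n)=2-2\cos\frac{\pi}{n}$, and $spr(L(P_n))=4\cos\frac{\pi}{n}$. Two facts: (a) any non-path tree has $\Delta\ge3$, so by Lemma~\ref{le2,1} $\mu_1(T_n)\ge\Delta+1\ge4$, and since $\mu_1=\Delta+1$ for a connected graph forces $\Delta=n-1$, in fact $\mu_1(T_n)>4>\mu_1(P_n)$ for $n\ge5$; (b) for any tree $\mu_{n-1}(T_n)=n-\mu_1(\overline{T_n})\le n-(\Delta(\overline{T_n})+1)=1$ (using $\delta(T_n)=1$ and Lemma~\ref{le2,1}), with equality only for $K_{1,n-1}$. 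Now let $T_n\not\cong P_n$. If $\mu_{n-1}(T_n)\le 4\bigl(1-\cos\frac{\pi}{n}\bigr)$, then by (a), $spr(L(T_n))=\mu_1(T_n)-\mu_{n-1}(T_n)>4-4\bigl(1-\cos\frac{\pi}{n}\bigr)=4\cos\frac{\pi}{n}$. If $\mu_{n-1}(T_n)> 4\bigl(1-\cos\frac{\pi}{n}\bigr)$ (only possible for $n\ge5$), one must instead show $\mu_1(T_n)\ge 4\cos\frac{\pi}{n}+\mu_{n-1}(T_n)$: this is immediate from $\mu_1\ge5$ and (b) when $\Delta(T_n)\ge4$, while for the ``branchy'' trees with $\Delta(T_n)=3$ it requires a finer estimate showing that the branch vertices force $\mu_1(T_n)$ far enough above $4$.

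The \emph{main obstacle} is the same in both directions: the tree maximising $\mu_1$ (the star) is not the one minimising $\mu_{n-1}$ (the path), so one cannot simply add the two one-sided comparisons and must control $\mu_1$ and $\mu_{n-1}$ jointly. I expect the genuinely hard steps to be (i) proving $\mu_1-\mu_{n-1}<n-1$ for trees with $\mu_1>n-1$ other than $K_{1,n-1}$ --- equivalently $\mu_{n-1}(G)+\mu_{n-1}(\overline G)\ge1$ --- and (ii) proving the strict lower bound for non-path trees whose algebraic connectivity is not small. The equality cases then drop out of the strictness obtained along the way: $spr(L(T_n))=n-1$ forces $\mu_1(T_n)=n$, hence $\overline{T_n}$ disconnected and $T_n\cong K_{1,n-1}$, while $spr(L(T_n))=4\cos\frac{\pi}{n}$ can hold only for $T_n\cong P_n$, since every non-path tree falls into one of the two cases above, in each of which the inequality is strict.
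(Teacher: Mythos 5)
First, note that the paper itself contains no proof of Lemma \ref{le2,9}: it is imported verbatim from \cite{FXWL}, so your argument has to stand on its own. Your upper-bound half essentially does: $spr(L(K_{1,n-1}))=n-1$; any tree with $\mu_1\le n-1$ has spread $<n-1$ because $\mu_{n-1}>0$; and by Lemma \ref{le2,6} together with $n-2<\mu_1(S_{2,n-4})<n-2+\tfrac1n$ the only trees with $\mu_1>n-1$ are $K_{1,n-1}$ and $S_{1,n-3}$, for which the cubic $f_1$ indeed closes the case (one checks $f_1(n-1)=-1$, so $\mu_1(S_{1,n-3})<n-1+\tfrac{1}{n^2-5n+5}$, while $f_1(1/3)<0$ gives $\mu_{n-1}(S_{1,n-3})>1/3$, hence spread $<n-1$). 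One caution: your parenthetical recasting of the bound as $\mu_{n-1}(T_n)+\mu_{n-1}(\overline{T_n})\ge 1$ ``with equality exactly when $\overline{T_n}$ is disconnected'' should not be presented as a known easy equivalence — in general that statement is the deep theorem of \cite{EK} — so the direct computation is the route to keep.

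The genuine gap is in the lower bound. Your case split covers $\mu_{n-1}(T_n)\le 4\bigl(1-\cos\tfrac{\pi}{n}\bigr)$ and $\Delta(T_n)\ge 4$, but for non-path trees with $\Delta(T_n)=3$ and $\mu_{n-1}(T_n)>4\bigl(1-\cos\tfrac{\pi}{n}\bigr)$ you only assert that ``a finer estimate'' is needed, and that case is nonempty and is exactly the hard part of the theorem. For instance, balanced binary-type trees have $\Delta=3$ and algebraic connectivity far above the threshold $4\bigl(1-\cos\tfrac{\pi}{n}\bigr)=\Theta(n^{-2})$ (it is at least of order $1/(n\,\mathrm{diam})$ by standard bounds, i.e.\ $\Omega(1/(n\log n))$ here), yet the only information your tools give is $\mu_1(T_n)>4$ via Lemma \ref{le2,1}, whereas the required inequality $\mu_1(T_n)\ge 4\cos\tfrac{\pi}{n}+\mu_{n-1}(T_n)$ demands $\mu_1(T_n)-4\ge \mu_{n-1}(T_n)-4\bigl(1-\cos\tfrac{\pi}{n}\bigr)>0$, a quantitative gap above $4$ that you never control. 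A diameter bound such as $\mu_{n-1}\le 2\bigl(1-\cos\tfrac{\pi}{d+1}\bigr)$ does not rescue this, since for small-diameter $\Delta=3$ trees that bound vastly exceeds the threshold. Consequently the strict inequality $spr(L(T_n))>spr(L(P_n))$ for $T_n\not\cong P_n$ — and with it the characterization of the left-hand equality case — is not established; this missing step is precisely the substance of \cite{FXWL} that you would need to reprove (e.g.\ via the grafting/comparison arguments used there) or cite.
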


\begin{lemma}{\bf (\cite{G, ZCL})}\label{le2,10} 
Let $\mathcal{U}_n$ the set of unicyclic graphs on $n$ vertices. Then
$$\mu_1(U_n)< \mu_1(G_{n,\,n}) \quad and \quad S_2(U_n)< S_2(G_{n,\,n})$$
for $U_n\in \mathcal{U}_n\setminus \{G_{n,\,n}\}$, $G_{n,\,n}$ shown in Fig. 2.1.
\end{lemma}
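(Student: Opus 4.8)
The plan is to prove the two inequalities separately. First it helps to describe $G_{n,n}$ in the case $m=n$ at hand: it is the star $K_{1,n-1}$ with one additional edge joining two of its pendant vertices, that is, $G_{n,n}=K_1\vee\bigl(K_2\cup (n-3)K_1\bigr)$. Using the Laplacian spectrum of a join, the Laplacian eigenvalues of $G_{n,n}$ are $n$, $3$, the value $1$ with multiplicity $n-3$, and $0$; hence $\mu_1(G_{n,n})=n$, $\mu_2(G_{n,n})=3$ and $S_2(L(G_{n,n}))=n+3$.

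For the bound on $\mu_1$: by Lemma~\ref{le2,1}, $\mu_1(U_n)\le n$ with equality if and only if $\overline{U_n}$ is disconnected, equivalently $U_n$ is a nontrivial join $G_1\vee G_2$ with parts of sizes $a$ and $n-a$. Such a join already contains the $a(n-a)$ edges between its two parts, so a unicyclic $U_n$ forces $a(n-a)\le n$ and therefore, for $n\ge 5$, $\min\{a,n-a\}=1$. Writing $U_n=K_1\vee H$ with $|V(H)|=n-1$ and using $e(U_n)=(n-1)+e(H)=n$ gives $e(H)=1$, so $H=K_2\cup (n-3)K_1$ and $U_n=G_{n,n}$. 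Consequently $\mu_1(U_n)<n=\mu_1(G_{n,n})$ for every $U_n\in\mathcal{U}_n\setminus\{G_{n,n}\}$.

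For the bound on $S_2$: I would start from the known estimate $S_2(L(G))\le e(G)+3$ (the verified $k=2$ case of Brouwer's conjecture on $S_k(L(G))$), so that $S_2(L(U_n))\le n+3=S_2(L(G_{n,n}))$ for unicyclic $U_n$; it then remains to show that within $\mathcal{U}_n$ equality forces $U_n=G_{n,n}$. I would split on whether $U_n$ has a dominating vertex. If it does, the previous paragraph already gives $U_n=G_{n,n}$. If it does not, so $\Delta(U_n)\le n-2$, I would prove the strict inequality $\mu_1(U_n)+\mu_2(U_n)<n+3$ either (a) by combining an upper bound on $\mu_1(U_n)$ that is bounded away from $n$ (exploiting that $\overline{U_n}$ is now connected, together with the degree/submatrix estimates of Lemmas~\ref{le2,2}--\ref{le2,4}) with an upper bound on $\mu_2(U_n)$ obtained from interlacing after deleting a suitable vertex, or (b) by a grafting argument: using Ky Fan's maximum principle $\mu_1(H)+\mu_2(H)=\max\{\operatorname{tr}(S^{\top}L(H)S):S\in\mathbb{R}^{n\times 2},\ S^{\top}S=I_2\}$, show that relocating a pendant edge, or the outer end of a pendant path, onto a maximum-degree vertex (and, if necessary, shortening the unique cycle to a triangle) keeps the graph unicyclic and strictly increases $\mu_1+\mu_2$, so that a finite sequence of such moves terminates at $G_{n,n}$; this variant yields the bound and its equality case simultaneously and avoids external input.

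The step I expect to be the main obstacle is precisely the equality case of the $S_2$ bound — excluding a unicyclic graph with no dominating vertex whose two largest Laplacian eigenvalues already sum to $n+3$ (in approach (a)), or verifying the strict Rayleigh-quotient monotonicity of each grafting move and that the moves really drive the graph to $G_{n,n}$ (in approach (b)). The underlying heuristic — that $\mu_1$ close to $n$ essentially forces a near-dominating vertex, while a second Laplacian eigenvalue as large as $3$ on top of such a vertex would demand more than $n$ edges — is convincing, but making it quantitative, and disposing of the finitely many small $n$ for which the relevant asymptotics are not yet decisive, is where the real work lies. By contrast, the inequality for $\mu_1$ and the non-strict estimate $S_2(L(U_n))\le n+3$ follow almost immediately from Lemma~\ref{le2,1} and the known Brouwer-type bound.
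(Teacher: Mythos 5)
The paper itself offers no proof of this lemma: the $\mu_1$ statement is imported from \cite{G} and the $S_2$ statement from \cite{ZCL}, so the question is whether your argument can stand in for those references. The first half can. Your identification $G_{n,\,n}=K_1\vee\bigl(K_2\cup(n-3)K_1\bigr)$, the join spectrum $\{n,\,3,\,1^{(n-3)},\,0\}$, and the deduction via Lemma~\ref{le2,1} that $\mu_1(U_n)=n$ forces $\overline{U_n}$ to be disconnected, hence $U_n$ a join, hence (by counting the $a(n-a)\le n$ cross edges) $U_n=K_1\vee H$ with $e(H)=1$, i.e. $U_n\cong G_{n,\,n}$, is correct and complete for $n\ge 5$; the restriction is genuinely needed, since for $n=4$ the cycle $C_4=K_{2,2}$ also attains $\mu_1=4$, so the strict inequality in the lemma implicitly assumes $n\ge5$, consistent with how it is used in Section~5.

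The $S_2$ half, however, is not proved. The bound $S_2(L(U_n))\le e(U_n)+3=n+3$ from the $k=2$ case of Brouwer's conjecture \cite{HMT} yields only the non-strict inequality $S_2(L(U_n))\le S_2(L(G_{n,\,n}))$, whereas the lemma asserts strict inequality for every $U_n\not\cong G_{n,\,n}$; that uniqueness analysis is precisely the content of the cited paper \cite{ZCL}, and in your write-up it is replaced by two unexecuted strategy sketches. Neither sketch is routine, and in particular the ``bounded away from $n+3$'' idea in your route (a) cannot be run on crude degree bounds: take a triangle with $a\ge1$ pendant edges at one vertex and $b\ge1$ at another, $a+b=n-3$; it has no dominating vertex, yet Lemmas~\ref{le2,1} and \ref{le2,4} already give $\mu_1+\mu_2\ge d_1+d_2+1=n+2$, so any strict comparison with $S_2(L(G_{n,\,n}))=n+3$ must separate values lying within $1$ of the extremum. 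This is why the statement is a paper-length theorem rather than a corollary of the Brouwer-type bound. To close the gap you must either carry out the grafting/equality analysis in full, invoke a published characterization of equality in $\mu_1+\mu_2\le m+3$ and then note that the unique connected unicyclic equality graph is $G_{n,\,n}$, or simply cite \cite{ZCL} as the paper does; as written, ``equality forces $U_n=G_{n,\,n}$'' is an acknowledged hole, not a proof.
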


\begin{lemma}{\bf (\cite{BTF, YL})}\label{le2,11} 
Let $U_n$ be a unicyclic graph with $n\geq 4$. Then
$$spr(L(C_n))\leq spr(L(U_n))\leq spr(L(G_{n,\, n})).$$
The equality in the left hand side holds if and only if $U_n\cong C_n$, and the equality in the right hand side holds if and only if $U_n\cong G_{n,\, n}$.
\end{lemma}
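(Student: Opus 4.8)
The plan is to prove the two inequalities separately; in each case I would first pin down $\mu_1$ and $\mu_{n-1}$ of the candidate extremal graph and then compare.

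For the upper bound: since $G_{n,\,n}=K_1\vee\bigl(K_2\cup(n-3)K_1\bigr)$ has a dominating vertex, Lemma \ref{le2,1} gives $\mu_1(G_{n,\,n})=n$, and a short equitable-partition computation (the automorphisms permute the $n-3$ pendant vertices and swap the two triangle vertices other than the dominating one) yields the full Laplacian spectrum $\{n,\,3,\,1^{(n-3)},\,0\}$, whence $spr(L(G_{n,\,n}))=n-1$. For arbitrary $U_n\in\mathcal{U}_n$, Lemma \ref{le2,10} already gives $\mu_1(U_n)\le\mu_1(G_{n,\,n})=n$ with equality only for $U_n\cong G_{n,\,n}$ (equivalently: a unicyclic graph has disconnected complement iff it has a vertex of degree $n-1$, which an edge count shows forces $U_n\cong G_{n,\,n}$). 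It then remains to show $\mu_{n-1}(U_n)\ge\mu_1(U_n)-(n-1)$. This is automatic when $\mu_1(U_n)\le n-1$, since $\mu_{n-1}(U_n)\ge0$ (and, $U_n$ being connected, $\mu_{n-1}(U_n)>0$, so in fact $spr(L(U_n))<n-1$); hence the whole content lies in the regime $\mu_1(U_n)>n-1$. A Merris-type bound $\mu_1(U_n)\le\max_v\bigl(d_v+\tfrac1{d_v}\sum_{u\sim v}d_u\bigr)$, combined with $\sum_i d_i(U_n)=2n$, forces $\Delta(U_n)\ge n-2$ in that regime (the finitely many graphs with $n\le 6$ being dealt with by inspection), leaving only the boundedly many unicyclic graphs with $\Delta\in\{n-2,\,n-1\}$; for each such family I would lower-bound $\mu_{n-1}(U_n)$ — via Lemma \ref{le2,3} applied to a spanning star-like subgraph, or via interlacing (Lemma \ref{le2,2}) — and verify the inequality directly, tracing equality back to $U_n\cong G_{n,\,n}$. (An appealing alternative uses the complement identity $\mu_i(G)+\mu_{n-i}(\overline G)=n$, which turns the claim into $\mu_1(U_n)+\mu_1(\overline{U_n})\le 2n-1$.)

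For the lower bound: from $\mu_j(C_n)=2-2\cos\frac{2\pi j}{n}$ one gets $spr(L(C_n))=2+2\cos\frac{2\pi}{n}$ for even $n$ and $spr(L(C_n))=2\cos\frac{\pi}{n}+2\cos\frac{2\pi}{n}$ for odd $n$. If $U_n\not\cong C_n$ then $\Delta(U_n)\ge3$, so Lemma \ref{le2,1} gives $\mu_1(U_n)\ge\Delta(U_n)+1\ge4\ge\mu_1(C_n)$; for $n\ge5$, the inequality $\Delta(U_n)<n-1$ makes the first step strict, so $\mu_1(U_n)>\mu_1(C_n)$. The crux is that this gain in $\mu_1$ must not be cancelled by an increase in $\mu_{n-1}$; to control this I would pass from $U_n$ down to $C_n$ along a sequence of unicyclic graphs obtained by successively "uncoiling" pendant branches onto the cycle, showing by a grafting/perturbation argument that $\mu_1-\mu_{n-1}$ does not increase at any step until $C_n$ is reached, the small orders $n=4,5$ (where $\mu_{n-1}(C_n)$ is sizeable and the paw graph also has $\mu_1=4$) being handled by direct computation.

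The genuine obstacle in both directions is the term $\mu_{n-1}(U_n)=a(U_n)$: unlike $\mu_1$, the algebraic connectivity is not monotone in the obvious structural direction — e.g.\ $a(C_n)\to0$ while $a(G_{n,\,n})=1$ — so $\mu_1$ and $\mu_{n-1}$ cannot be optimized independently, and one must instead control the single quantity $\mu_1-\mu_{n-1}$. This is why a perturbation/edge-grafting argument, or a case analysis confined to the regime in which $\mu_1$ is extremal, appears unavoidable.
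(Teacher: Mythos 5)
Your outline gets the bookkeeping right: the Laplacian spectrum of $G_{n,\,n}=K_1\vee(K_2\cup(n-3)K_1)$ is indeed $\{n,\,3,\,1^{(n-3)},\,0\}$, so $spr(L(G_{n,\,n}))=n-1$; the reduction of the upper bound to the regime $\mu_1(U_n)>n-1$, and the observation that $\mu_1(U_n)=n$ forces a dominating vertex and hence $U_n\cong G_{n,\,n}$, are both correct; and the cycle spread formulas are right. (For calibration: the paper itself offers no proof of this lemma --- it is imported from \cite{BTF, YL}, where the two inequalities are the main theorems of two separate papers.)

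The problem is that in both directions the step that carries the entire mathematical content is announced rather than proved. For the upper bound, after the Merris-type reduction (which itself needs the convexity estimate $d_v+\tfrac{n+1}{d_v}\le n-1$ for $3\le d_v\le n-3$ spelled out, and pushes the ``by inspection'' threshold to about $n\le 7$, not $6$), you are left with the $\Delta=n-2$ families ($C_4$ plus pendants at one vertex; triangle plus a pendant path of length two; triangle plus pendants with one subdivided edge). For each you must produce a \emph{quantitative} lower bound $\mu_{n-1}(U_n)>\mu_1(U_n)-(n-1)$, where $\mu_1$ can be arbitrarily close to $n$, together with a strictness argument that pins equality to $G_{n,\,n}$; ``I would lower-bound $\mu_{n-1}$ via Lemma \ref{le2,3} or interlacing and verify directly'' does not do this, and note that Lemma \ref{le2,3} only transfers a lower bound from a spanning subgraph, whose algebraic connectivity you would still have to compute. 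For the lower bound the situation is worse: the assertion that $\mu_1-\mu_{n-1}$ does not increase under successive ``uncoiling'' of pendant branches onto the cycle is exactly the theorem of \cite{YL}, and you give no argument for it. As you yourself note, $\mu_{n-1}$ is not monotone under such graph perturbations (it can increase or decrease), so there is no soft reason why the spread should behave monotonically along your sequence; establishing this requires the detailed eigenvalue-perturbation/characteristic-polynomial analysis that constitutes that paper. So as it stands the proposal is a plausible roadmap with the two decisive steps missing, not a proof.
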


\begin{lemma}{\bf (\cite{FLT, HSH, ZCLR})}\label{le2,12} 
Let $\mathcal{B}_n$ the set of bicyclic graphs on $n$ vertices. Then

{\normalfont (i)} $\mu_1(B_n)< \mu_1(B_n^{*})$ for $B_n\in \mathcal{B}_n\setminus \{B_n^{*}\}$, where $B_n^{*}$ is obtained from a star of order $n$ by adding two edges.

{\normalfont (ii)} $spr(L(B_n))<  spr(L(B_n^{*}))$ for $B_n\in \mathcal{B}_n\setminus \{B_n^{*}\}$.

{\normalfont (iii)} $S_2(L(B_n))< S_2(L(G_{n+1,\,n}))$
for $B_n\in \mathcal{B}_n\setminus \{G_{n+1,\,n}\}$, $G_{n+1,\,n}$ shown in Fig. 2.1.
\end{lemma}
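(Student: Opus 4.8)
All three parts assert that a specific bicyclic graph maximises a Laplacian invariant over $\mathcal B_n$, so I would use the standard scheme for such extremal problems: reduce, via structural and monotonicity arguments, to a short finite list of candidate extremal graphs --- which will always turn out to be joins $K_1\vee H$ with $H$ a graph on $n-1$ vertices carrying exactly two edges --- and then finish by comparing that list through the transparent Laplacian spectrum of a join (using Lemma~\ref{le2,5} for the threshold graphs among them, and the join formula for the rest).

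For (i), Lemma~\ref{le2,1} already gives $\mu_1(G)\le n$ for every bicyclic $G$, with equality precisely when $\overline G$ is disconnected, i.e.\ when $G$ is a join $G_1\vee G_2$. Counting edges shows that a bicyclic join on $n\ge 6$ vertices must have $G_1=K_1$, so the $\mu_1$-maximisers lie in the finite list $\{K_1\vee H:\ H$ has $n-1$ vertices and exactly two edges$\}$, which contains $B_n^{*}$ and on which $\mu_1=n$; the small-$n$ cases are treated by hand. It remains to separate $B_n^{*}$ from the other members of that list and from all bicyclic graphs with connected complement (where $\mu_1<n$), a finite comparison carried out via $\mu_i(K_1\vee H)=1+\mu_{i-1}(H)$ for $i\ge 2$.

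For (ii), $spr(L(G))=\mu_1(G)-\mu_{n-1}(G)$, and for $B_n^{*}=K_1\vee H$ one has $\mu_1=n$ and $\mu_{n-1}=1+\mu_{n-2}(H)=1$, so $spr(L(B_n^{*}))=n-1$. I would prove maximality by the same reduction: $\mu_1\le n$ together with $\mu_{n-1}(K_1\vee H)\ge 1$ handles every bicyclic join, while for a bicyclic graph with connected complement and $n-1<\mu_1<n$ one must show $\mu_{n-1}\ge\mu_1-(n-1)$. This last point is the delicate one, and I would attack it by enumerating the few near-join bicyclic structures (those with $\Delta=n-2$ and a single missing edge at a dominating vertex) and bounding their algebraic connectivity directly, for instance through the interlacing Lemma~\ref{le2,3}.

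For (iii) the winner is $G_{n+1,n}$, because here the second eigenvalue does the work. The clean route is the bound $S_2(L(G))\le m+3=n+4$, valid for every bicyclic $G$ (the $k=2$ instance of Brouwer's conjecture, which is known), together with the fact that $G_{n+1,n}=K_1\vee\bigl(P_3\cup(n-4)K_1\bigr)$ is a threshold graph with conjugate degree sequence $(n,4,2,1,\dots,1,0)$, so by Lemma~\ref{le2,5} its Laplacian spectrum is $n,4,2,1,\dots,1,0$ and $S_2(L(G_{n+1,n}))=n+4$ attains the bound. For strictness one shows (by a case analysis, or via the equality conditions for the $S_2$-bound) that a maximiser must be a join $K_1\vee H$, and then $S_2(L(K_1\vee H))=n+1+\mu_1(H)=n+4$ forces $\mu_1(H)=3$, i.e.\ $H\supseteq P_3$, whence $G\cong G_{n+1,n}$; the other join candidate $K_1\vee(2K_2\cup(n-5)K_1)$ has Laplacian spectrum $n,3,3,1,\dots,1,0$ and $S_2=n+3<n+4$. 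Throughout, the conceptual steps are short; the real work --- and the main obstacle --- is the strictness bookkeeping: distinguishing several bicyclic graphs that share a degree sequence (done via the explicit join spectra above, or via the characteristic polynomials of the natural equitable quotient matrices) and, for (ii), controlling the algebraic connectivity of the near-join graphs.
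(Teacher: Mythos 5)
First, note that the paper itself does not prove Lemma~\ref{le2,12}: it is quoted verbatim from \cite{FLT, HSH, ZCLR}, so your attempt is competing with the literature rather than with an in-paper argument. Your overall scheme (reduce to joins $K_1\vee H$ with $H$ carrying two edges, then compute via the join/threshold spectra) is the right computational backbone, and your explicit spectra are correct: $K_1\vee(P_3\cup(n-4)K_1)=G_{n+1,\,n}$ has Laplacian spectrum $(n,4,2,1,\dots,1,0)$ and $K_1\vee(2K_2\cup(n-5)K_1)$ has spectrum $(n,3,3,1,\dots,1,0)$. But precisely these computations expose the genuine gap in your plan for (i) and (ii): the ``finite comparison'' you defer cannot deliver the strict inequalities, because the two star-plus-two-edges graphs \emph{tie}. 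Both have a dominating vertex, hence $\mu_1=n$ by Lemma~\ref{le2,1}, and both have $\mu_{n-1}=1$, hence $spr(L(\cdot))=n-1$. So whichever single graph $B_n^{*}$ denotes, the other one is a bicyclic graph in $\mathcal{B}_n\setminus\{B_n^{*}\}$ attaining the same value of $\mu_1$ and of the spread; no argument can separate them strictly, and your claim that the remaining step is a routine comparison via $\mu_i(K_1\vee H)=1+\mu_{i-1}(H)$, $i\ge 2$, is simply not applicable to $\mu_1$ or to $\mu_1-\mu_{n-1}$. A correct write-up must state the extremal family for (i) and (ii) as consisting of both graphs (this is how the cited sources handle it, by listing/ordering the top graphs), or find some other resolution; as a proof of the lemma with a unique $B_n^{*}$ and strict inequality, your outline cannot close. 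Note that for (iii) the tie does break, since $S_2$ distinguishes them ($n+4$ versus $n+3$), which is consistent with your computation.

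Two further steps are asserted rather than proved. In (ii), the non-join case is reduced to ``near-join structures with $\Delta=n-2$'', but you give no argument that a bicyclic graph with $\mu_1>n-1$ must have that form (a bound such as $\mu_1\le\max_{uv\in E}(d_u+d_v)$ only forces some edge with large degree sum, which is compatible with $\Delta\le n-3$), and the required lower bound $\mu_{n-1}\ge\mu_1-(n-1)$ for those graphs is exactly the hard content of \cite{FLT}; interlacing via Lemma~\ref{le2,3} only gives upper bounds on $\mu_{n-1}$-type quantities in the wrong direction, so you would need a genuine algebraic-connectivity estimate here. In (iii), invoking ``the equality conditions for the $S_2$-bound'' $S_2\le m+3$ is close to circular: characterizing equality (in particular, showing a bicyclic maximiser must contain a dominating vertex) is essentially what \cite{ZCLR} proves, so either you carry out that case analysis or you are re-citing the result you set out to prove. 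The upper-bound half of (iii), $S_2(L(B_n))\le n+4$ with the value attained by $G_{n+1,\,n}$, is fine as you present it.
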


\begin{picture}(300,80)
    \put(75,40){\circle*{3}}
    \put(75,40){\line(-5,4){23}}
    \put(75,40){\line(-5,-4){23}}
    \put(75,40){\circle*{3}}
    \put(75,40){\line(5,4){23}}
    \put(75,40){\line(5,-4){23}}
    \put(97,58){\circle*{3}}
    \put(97,58){\line(1,0){30}}
    \put(97,22){\circle*{3}}
    \put(97,22){\line(1,0){30}}
    \put(127,22){\circle*{3}}
    \put(127,58){\circle*{3}}
    \put(53,58){\circle*{3}}
    \put(53,35){\vdots}
    \put(53,22){\circle*{3}}
    \put(68,5){\footnotesize $T_n^3$}

    \put(199,40){\circle*{3}}
    \put(199,40){\line(-5,4){23}}
    \put(199,40){\line(-5,-4){23}}
    \put(199,40){\line(1,0){30}}
    \put(229,40){\circle*{3}}
    \put(229,40){\line(1,0){30}}
    \put(259,40){\circle*{3}}
    \put(259,40){\line(1,0){30}}
    \put(289,40){\circle*{3}}
    \put(177,58){\circle*{3}}
    \put(177,35){\vdots}
    \put(177,22){\circle*{3}}
    \put(242,5){\footnotesize $T_n^4$ }

    \put(351,40){\circle*{3}}
    \put(351,40){\line(-5,4){23}}
    \put(351,40){\line(-5,-4){23}}
    \put(351,40){\line(1,0){40}}
  \put(351,40){\line(1,0){40}}
   \put(371,70){\circle*{3}}
     \put(371,70){\line(-2,-3){20}}
     \put(371,70){\line(2,-3){20}}
      \put(371,50){\circle*{3}}
      \put(371,50){\line(-2,-1){20}}
     \put(371,50){\line(2,-1){20}}
    \put(391,40){\circle*{3}}
    \put(369,55){\vdots}
   \put(329,73){\line(2,-3){22}}
   \put(329,73){\circle*{3}}
    \put(329,58){\circle*{3}}
    \put(329,35){\vdots}
    \put(329,22){\circle*{3}}
\put(358,5){\footnotesize $G_{m\,n}$ }

\put(135,-16){Fig. 2.1 \quad Graphs $T_n^3$,  $T_n^4$ and $G_{m\,n}$. }
\end{picture}

\section{\large  Some properties on $\sigma_1(LI)$, $\sigma_n(LI)$  and $||LI||_{F_2}$ of a graph}

\begin{theorem}\label{th3,1} 
Let $G$ be a graph with $n$ vertices and $m\geq 1$ edges.

{\normalfont (i)} If $m\geq \frac{n^2}{4}$, then $\sigma_1(LI(G))=\frac{2m}{n}$.

{\normalfont (ii)} If $\Delta\geq \frac{4m}{n}-1$, then $\sigma_1(LI(G))=\mu_1(G)-\frac{2m}{n}$.

{\normalfont (iii)} If $G$ is a connected $r$-regular graph, then $\sigma_1(LI(G))=\frac{2m}{n}$.

{\normalfont (iv)} If $G$ is a bipartite graph, then $\sigma_1(LI(G))=\mu_1(G)-\frac{2m}{n}$.

\end{theorem}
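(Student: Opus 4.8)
The plan is to reduce all four claims to elementary inequalities for the Laplacian eigenvalues $\mu_1(G)\ge\cdots\ge\mu_n(G)=0$. Since $LI(G)=L(G)-\frac{2m}{n}I_n$ is real symmetric, its singular values are the absolute values of its eigenvalues, and the eigenvalues of $LI(G)$ are exactly $\mu_i(G)-\frac{2m}{n}$, $1\le i\le n$. Hence $\sigma_1(LI(G))=\max_{1\le i\le n}\bigl|\mu_i(G)-\tfrac{2m}{n}\bigr|$. Because the Laplacian eigenvalues sum to $2m$, their maximum satisfies $\mu_1(G)\ge\frac{2m}{n}$, and since $0=\mu_n(G)\le\mu_i(G)\le\mu_1(G)$ for every $i$, for any $t$ in $[0,\mu_1(G)]$ one has $\bigl|t-\tfrac{2m}{n}\bigr|\le\max\{\mu_1(G)-\tfrac{2m}{n},\ \tfrac{2m}{n}\}$, with both bounds attained at the endpoints $t=\mu_1(G)$ and $t=0$. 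Therefore
$$\sigma_1(LI(G))=\max\Bigl\{\mu_1(G)-\tfrac{2m}{n},\ \tfrac{2m}{n}\Bigr\}.$$
Consequently $\sigma_1(LI(G))=\frac{2m}{n}$ exactly when $\mu_1(G)\le\frac{4m}{n}$, and $\sigma_1(LI(G))=\mu_1(G)-\frac{2m}{n}$ exactly when $\mu_1(G)\ge\frac{4m}{n}$; each of (i)--(iv) follows by checking one of these two inequalities.

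For (i): Lemma \ref{le2,1} gives $\mu_1(G)\le n$, and $m\ge n^2/4$ forces $n\le\frac{4m}{n}$, so $\mu_1(G)\le\frac{4m}{n}$. For (ii): Lemma \ref{le2,1} gives $\mu_1(G)\ge\Delta+1\ge\frac{4m}{n}$. For (iii): if $G$ is $r$-regular then $\frac{2m}{n}=r$ and $L(G)=rI_n-A(G)$; the eigenvalues of $A(G)$ lie in $[-r,r]$, hence those of $L(G)$ lie in $[0,2r]$, giving $\mu_1(G)\le 2r=\frac{4m}{n}$ (connectedness is not actually needed for this estimate, but is harmless).

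For (iv) I would use the Rayleigh quotient $\mu_1(G)=\max_{x\ne 0}\frac{x^{T}L(G)x}{x^{T}x}$ with a sign vector adapted to the bipartition. Fix a bipartition $V(G)=X\cup Y$ with $|X|=p$, $|Y|=q$, $p+q=n$ (both parts nonempty since $m\ge 1$), and put $x_v=1/p$ for $v\in X$ and $x_v=-1/q$ for $v\in Y$. Every edge of $G$ joins $X$ to $Y$, so $x^{T}L(G)x=\sum_{uv\in E(G)}(x_u-x_v)^2=m\bigl(\tfrac1p+\tfrac1q\bigr)^2$, while $x^{T}x=p\cdot\tfrac1{p^2}+q\cdot\tfrac1{q^2}=\tfrac1p+\tfrac1q$. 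Hence $\mu_1(G)\ge m\bigl(\tfrac1p+\tfrac1q\bigr)=\frac{mn}{pq}\ge\frac{mn}{n^2/4}=\frac{4m}{n}$, where the last step uses $pq\le(p+q)^2/4$. This yields $\sigma_1(LI(G))=\mu_1(G)-\frac{2m}{n}$, as claimed.

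The only genuine content lies in part (iv): the bound $\mu_1(G)\ge\frac{4m}{n}$ for bipartite $G$ must be produced by hand (the test-vector computation above), whereas parts (i)--(iii) are immediate consequences of the displayed closed form for $\sigma_1(LI(G))$ together with Lemma \ref{le2,1} and the standard spectral bound for regular graphs. I would present the reduction to the displayed formula once and then dispose of the four items in the order listed.
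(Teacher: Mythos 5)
Your proposal is correct, and for parts (i)--(iii) it runs along the same lines as the paper: you first make explicit the identity $\sigma_1(LI(G))=\max\{\mu_1(G)-\tfrac{2m}{n},\tfrac{2m}{n}\}$ (which the paper uses implicitly), then apply Lemma \ref{le2,1} for (i) and (ii), and for (iii) the bound $\mu_1(G)\le 2r$ via $L(G)=rI_n-A(G)$, which is the same estimate the paper obtains by its contradiction argument with $\lambda_n(G)$; your remark that connectedness is not needed there is accurate. The genuine divergence is in (iv): the paper argues that for bipartite graphs the Laplacian and signless Laplacian spectra coincide and then quotes the known bound $q_1(G)\ge\tfrac{4m}{n}$ from the cited reference [NLL], whereas you prove $\mu_1(G)\ge\tfrac{4m}{n}$ directly by a Rayleigh-quotient computation with the test vector taking value $1/p$ on one side of the bipartition and $-1/q$ on the other, giving $\mu_1(G)\ge\tfrac{mn}{pq}\ge\tfrac{4m}{n}$ since $pq\le n^2/4$; your computation of $x^{T}L(G)x=m(\tfrac1p+\tfrac1q)^2$ and $x^Tx=\tfrac1p+\tfrac1q$ is correct because every edge crosses the bipartition, and $m\ge1$ guarantees both parts are nonempty. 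Your route is self-contained and more elementary, and it even yields the slightly stronger intermediate bound $\mu_1(G)\ge\tfrac{mn}{pq}$; the paper's route is shorter on the page but rests on two external facts (the bipartite Laplacian/signless-Laplacian coincidence and the signless Laplacian bound), so both are valid, with yours trading citation for a short explicit calculation.
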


\begin{proof} (i) If $m\geq \frac{n^2}{4}$, by Lemma \ref{le2,1}, we have $\mu_1(G)\leq n\leq \frac{4m}{n}$. Thus
$$\sigma_1(LI(G))=\max\left\{\mu_1(G)-\frac{2m}{n},\frac{2m}{n}\right\}=\frac{2m}{n}.$$

(ii) If $\Delta\geq \frac{4m}{n}-1$, by Lemma \ref{le2,1}, we have $\mu_1(G)\geq \Delta+1\geq  \frac{4m}{n}$. Thus
$$\sigma_1(LI(G))=\max\left\{\mu_1(G)-\frac{2m}{n},\frac{2m}{n}\right\}=\mu_1(G)-\frac{2m}{n}.$$

(iii) If $G$ is a connected $r$-regular graph, then $\mu_1(G)=r-\lambda_n(G)$, where $\lambda_n(G)$ is the least eigenvalue of the adjacency matrix of $G$. If $\mu_1(G)=r-\lambda_n(G)>\frac{4m}{n}=2r$,
that is $|\lambda_n(G)|>r=\lambda_1(G)$, a contradiction. Thus $\mu_1(G)\leq \frac{4m}{n}$. Further, $\sigma_1(LI(G))=\frac{2m}{n}$.

(iv) It is well known that the spectra of Laplacian matrix and signless Laplacian matrix coincide if and only if the graph $G$ is bipartite. From Lemma 2.1 in \cite{NLL}, we have $\mu_1(G)\geq \frac{4m}{n}$. Thus $\sigma_1(LI(G))=\mu_1(G)-\frac{2m}{n}$.

This completes the proof. $\Box$
\end{proof}

\begin{corollary}\label{cor3,1} 
Let $G$ be a graph with $n$ vertices and $m$ edges. If $\mu_1(G)<\frac{4m}{n}$, then $G$ contains odd cycles.
\end{corollary}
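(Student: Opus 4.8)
The plan is to prove the contrapositive: if $G$ contains no odd cycle, then $\mu_1(G)\geq\frac{4m}{n}$. A finite graph has no odd cycle precisely when it is bipartite, so it suffices to handle the bipartite case, and that is exactly what Theorem \ref{th3,1}(iv) delivers. Indeed, for bipartite $G$ that theorem gives $\sigma_1(LI(G))=\mu_1(G)-\frac{2m}{n}$; on the other hand, by the general formula $\sigma_1(LI(G))=\max\{\mu_1(G)-\frac{2m}{n},\frac{2m}{n}\}$ used throughout Section 3 we always have $\sigma_1(LI(G))\geq\frac{2m}{n}$. Combining these, $\mu_1(G)-\frac{2m}{n}\geq\frac{2m}{n}$, i.e. $\mu_1(G)\geq\frac{4m}{n}$, contradicting the hypothesis $\mu_1(G)<\frac{4m}{n}$. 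Hence $G$ is not bipartite, so it must contain an odd cycle.

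Alternatively, and more self-containedly, one can quote directly the inequality $\mu_1(G)\geq\frac{4m}{n}$ for bipartite $G$, which is precisely the ingredient invoked in the proof of Theorem \ref{th3,1}(iv) (Lemma 2.1 in \cite{NLL}): for a bipartite graph the Laplacian and signless Laplacian spectra coincide, and the signless Laplacian spectral radius is at least $\frac{4m}{n}$ by evaluating the Rayleigh quotient at the all-ones vector (giving average degree $\frac{2m}{n}$) together with the fact that it dominates the average of $2d_i$ over the edges. The contradiction is then immediate.

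Since the statement is a one-line deduction from Theorem \ref{th3,1}(iv), there is essentially no technical obstacle here; the only point that deserves to be written out is the elementary graph-theoretic equivalence "$G$ has no odd cycle $\iff$ $G$ is bipartite,'' which is what converts the spectral hypothesis into the structural conclusion about odd cycles.
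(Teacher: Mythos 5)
Your proposal is correct and follows exactly the route the paper intends: the corollary is the contrapositive of the fact, established in the proof of Theorem \ref{th3,1}(iv), that a bipartite graph (equivalently, a graph with no odd cycle) satisfies $\mu_1(G)\geq\frac{4m}{n}$. Both your derivation via $\sigma_1(LI(G))=\max\{\mu_1(G)-\frac{2m}{n},\frac{2m}{n}\}\geq\frac{2m}{n}$ and your direct appeal to the signless Laplacian bound are sound and match the paper's reasoning.
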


\begin{remark}
There exist non-bipartite graphs for which the equality $\sigma_1(LI(G))=\mu_1(G)-\frac{2m}{n}$ holds for $m<\frac{n^2}{4}$. The odd unicyclic graphs except odd cycle or bicyclic graphs with $n\geq 17$ vertices are examples. However, there also exist non-regular graphs for which the equality $\sigma_1(LI(G))=\frac{2m}{n}$ holds for $m<\frac{n^2}{4}$. The graph $H_n$ with $n$ vertices and $\frac{4}{3}n$ edges, depicted in Fig. 3.1, is an example. By direct calculations, we have $\mu_1(H_n)=5<\frac{16}{3}=\frac{4m}{n}$. Thus
$\sigma_1(LI(G))=\frac{2m}{n}=\frac{8}{3}$. It is interesting to characterize the graphs satisfying $\sigma_1(LI(G))=\frac{2m}{n}$ for $m<\frac{n^2}{4}$.

\begin{picture}(300,65)
    \put(35,40){\circle*{3}}
    \put(35,40){\line(-5,4){23}}
    \put(35,40){\line(-5,-4){23}}
    \put(35,40){\line(1,0){30}}
    \put(65,40){\circle*{3}}
    \put(65,40){\line(5,4){23}}
    \put(65,40){\line(5,-4){23}}
    \put(87,58){\circle*{3}}
    \put(87,58){\line(0,-1){36}}
    \put(87,22){\circle*{3}}
    \put(87,22){\line(2,3){24}}
   \put(111,58){\circle*{3}}
   \put(111,58){\line(2,-3){24}}
   \put(135,22){\circle*{3}}
   \put(135,22){\line(2,3){24}}
    \put(159,58){\circle*{3}}
    \put(159,58){\line(2,-3){24}}
    \put(159,58){\line(-2,0){48}}
    \put(183,22){\circle*{3}}
    \put(183,22){\line(2,3){24}}
    \put(183,22){\line(2,0){48}}
    \put(207,58){\circle*{3}}
    \put(207,58){\line(2,-3){24}}
    \put(231,22){\circle*{3}}
    \put(231,22){\line(2,3){24}}
    \put(255,58){\circle*{3}}
    \put(255,58){\line(2,-3){24}}
    \put(255,58){\line(2,0){48}}
    \put(279,22){\circle*{3}}
    \put(279,22){\line(2,3){24}}
    \put(303,58){\circle*{3}}
    \put(303,58){\line(2,-3){24}}
    \put(327,22){\circle*{3}}
    \put(327,22){\line(2,3){24}}
    \put(351,58){\circle*{3}}
    \put(351,58){\line(2,-3){24}}
    \put(375,22){\circle*{3}}
    \put(327,22){\line(2,0){48}}
    \put(400,22){\ldots}
    \put(375,22){\line(2,3){24}}
    \put(399,58){\circle*{3}}
    \put(13,58){\circle*{3}}
    \put(13,58){\line(0,-1){36}}
    \put(13,58){\line(1,0){74}}
    \put(13,22){\circle*{3}}

\put(40,-10){Fig. 3.1 \quad The graph $H_n$ ($n=3t$, $t\geq 2$ is the number of triangles). }
\end{picture}

\end{remark}

\begin{question}\label{qu3,1} 
Characterize all graphs $G$ satisfying $\sigma_1(LI(G))=\frac{2m}{n}$ for $m<\frac{n^2}{4}$.
\end{question}

\begin{theorem}\label{th3,2} 
Let $G$ be a connected graph on $n\geq 3$ vertices. Then
$$\mu_1(G)+\mu_2(G)> \frac{4m}{n}+1.$$
\end{theorem}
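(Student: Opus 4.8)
The plan is to combine two standard Laplacian lower bounds with an elementary degree‑sequence estimate. Since $G$ is connected on $n\ge 3$ vertices it has at least one edge, so Lemma~\ref{le2,1} gives $\mu_1(G)\ge\Delta(G)+1=d_1(G)+1$, while Lemma~\ref{le2,4} gives $\mu_2(G)\ge d_2(G)$. Adding these,
$$\mu_1(G)+\mu_2(G)\ \ge\ d_1(G)+d_2(G)+1,$$
so it suffices to prove $d_1(G)+d_2(G)\ge\frac{4m}{n}$ and then to upgrade one of the inequalities in this chain to a strict one.

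For the degree estimate I would start from $2m=\sum_{i=1}^n d_i(G)$ and use $\sum_{i=3}^n d_i(G)\le(n-2)d_2(G)$ to get $2m\le d_1(G)+(n-1)d_2(G)$, i.e. $d_2(G)\ge\frac{2m-d_1(G)}{n-1}$. Substituting and simplifying,
$$d_1(G)+d_2(G)\ \ge\ \frac{(n-2)d_1(G)+2m}{n-1}\ \ge\ \frac{4m}{n},$$
where the second inequality, after clearing denominators (all positive since $n\ge 3$), is equivalent to $d_1(G)\ge\frac{2m}{n}$, which holds because the maximum degree is at least the average degree. Equality in this last step forces $d_1(G)=\frac{2m}{n}$, and since $d_1(G)$ is the maximum degree this can happen only when all degrees are equal, i.e. when $G$ is regular.

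It remains to secure strictness. If $G$ is not regular the degree estimate is already strict, hence $\mu_1(G)+\mu_2(G)\ge d_1(G)+d_2(G)+1>\frac{4m}{n}+1$. If $G$ is $r$-regular, then $d_1(G)=d_2(G)=r$ and $\frac{4m}{n}=2r$; here I would distinguish two cases. If $\Delta(G)<n-1$, then by Lemma~\ref{le2,1} the bound $\mu_1(G)\ge\Delta(G)+1$ is strict, so $\mu_1(G)+\mu_2(G)>(r+1)+r=2r+1$. If $\Delta(G)=n-1$, then $G$ is connected and $(n-1)$-regular, so $G\cong K_n$, and then $\mu_2(K_n)=n>n-1=d_2(G)$, so the bound $\mu_2(G)\ge d_2(G)$ is strict and again $\mu_1(G)+\mu_2(G)>(r+1)+r=2r+1$. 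In all cases $\mu_1(G)+\mu_2(G)>\frac{4m}{n}+1$.

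The only point that needs care is the equality analysis of the degree estimate — one must check that equality really characterizes exactly the regular graphs — together with correctly identifying which inequality becomes strict in each regular subcase; beyond that the argument is routine bookkeeping with Lemmas~\ref{le2,1} and~\ref{le2,4}.
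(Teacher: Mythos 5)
Your proof is correct, and it diverges from the paper's argument in the regular case. For non-regular $G$ both you and the paper combine Lemma~\ref{le2,1} and Lemma~\ref{le2,4} to get $\mu_1+\mu_2\ge d_1+d_2+1$; the paper then simply asserts $d_1+d_2>\frac{4m}{n}$, whereas you actually prove it via $2m\le d_1+(n-1)d_2$ and $d_1\ge\frac{2m}{n}$, together with the observation that equality forces regularity — a worthwhile piece of bookkeeping that the paper leaves implicit. For regular $G$ the routes differ: the paper passes to adjacency eigenvalues, writing $\mu_1+\mu_2=2r-\lambda_{n-1}-\lambda_n$ and invoking $\lambda_{n-1}\le 0$ together with Hong's bound $\lambda_n\le-\frac{1+\sqrt{5}}{2}$ for regular graphs other than $K_n$ (treating $K_n$ separately), which yields the stronger estimate $\mu_1+\mu_2\ge 2r+\frac{1+\sqrt{5}}{2}$; you instead stay entirely within the Laplacian lemmas, using the strictness clause of Lemma~\ref{le2,1} ($\mu_1>\Delta+1$ for connected graphs with $\Delta<n-1$) plus $\mu_2\ge d_2$, and dispatching $\Delta=n-1$ by identifying $G\cong K_n$ with $\mu_2=n$. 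Your version is more elementary and self-contained, needing no external result on the least adjacency eigenvalue; the paper's version costs that extra citation but delivers a quantitatively better gap in the regular non-complete case. Both establish the stated strict inequality, and your case analysis (non-regular, regular with $\Delta<n-1$, and $K_n$) is exhaustive and each strictness claim is justified.
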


\begin{proof} Let $(d_1(G), d_2(G), \ldots, d_n(G))$ be the degree sequence of $G$. If $G$ is not a $r$-regular graph, by Lemmas \ref{le2,1} and \ref{le2,4}, we have
$$\mu_1(G)+\mu_2(G)\geq d_1(G)+ d_2(G)+1>\frac{2(d_1(G)+ d_2(G)+\cdots+d_n(G))}{n}+1=\frac{4m}{n}+1.$$

If $G=K_n$, then $\mu_1(G)+\mu_2(G)=2n>\frac{4m}{n}+1$. If $G$ is a $r$-regular graph without $K_{n}$, then
$\mu_1(G)+\mu_2(G)=2r-\lambda_{n-1}(G)-\lambda_{n}(G)$,
where $\lambda_{n-1}(G)$ and $\lambda_{n}(G)$ are the second least eigenvalue and the least eigenvalue of $A(G)$, respectively. Clearly, $\lambda_{n-1}(G)\leq 0$ and $\lambda_{n}(G)\leq -\frac{1+\sqrt{5}}{2}$ (see, e.g. \cite{H}). Thus
$$\mu_1(G)+\mu_2(G)=2r-\lambda_{n-1}(G)-\lambda_{n}(G)\geq 2r+\frac{1+\sqrt{5}}{2}>\frac{4m}{n}+1.$$

From the above arguments, we have the proof. $\Box$
\end{proof}

\begin{corollary}\label{cor3,2} 
Let $G$ be a connected graph on $n\geq 3$ vertices. If $\sigma_{n-1}(LI(G))>\sigma_n(LI(G))$, then $\sigma_n(LI(G))\neq \mu_1(G)-\frac{2m}{n}$.
\end{corollary}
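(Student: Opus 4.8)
The plan is a short proof by contradiction built on Theorem~\ref{th3,2}. First I would record the two structural facts about $LI(G)$ that drive the argument. Since $LI(G)=L(G)-\frac{2m}{n}I_n$ is real symmetric with eigenvalues $\mu_i(G)-\frac{2m}{n}$ for $1\le i\le n$, its singular values are the moduli $\left|\mu_i(G)-\frac{2m}{n}\right|$ written in non-increasing order; in particular $\sigma_n(LI(G))=\min_{1\le i\le n}\left|\mu_i(G)-\frac{2m}{n}\right|$. Moreover $\sum_{i=1}^{n}\mu_i(G)=2m$ (the trace of $L(G)$), so $\mu_1(G)\ge\frac{2m}{n}$ and hence $\mu_1(G)-\frac{2m}{n}=\left|\mu_1(G)-\frac{2m}{n}\right|$ is genuinely one of the $n$ singular values of $LI(G)$.

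Now suppose, for contradiction, that $\sigma_n(LI(G))=\mu_1(G)-\frac{2m}{n}$. Then the minimum of the multiset $\left\{\left|\mu_i(G)-\frac{2m}{n}\right| : 1\le i\le n\right\}$ is attained at $i=1$. The hypothesis $\sigma_{n-1}(LI(G))>\sigma_n(LI(G))$ says that the smallest singular value is strictly smaller than every other one, i.e.\ this minimum is attained by a single index, so it is attained only at $i=1$. Consequently
$$\left|\mu_2(G)-\frac{2m}{n}\right|>\mu_1(G)-\frac{2m}{n}.$$

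To reach a contradiction I would next fix the sign of $\mu_2(G)-\frac{2m}{n}$. Since $\mu_2(G)\le\mu_1(G)$, if $\mu_2(G)\ge\frac{2m}{n}$ then $\left|\mu_2(G)-\frac{2m}{n}\right|=\mu_2(G)-\frac{2m}{n}\le\mu_1(G)-\frac{2m}{n}$, contradicting the displayed inequality. Hence $\mu_2(G)<\frac{2m}{n}$, so $\left|\mu_2(G)-\frac{2m}{n}\right|=\frac{2m}{n}-\mu_2(G)$, and the displayed inequality becomes $\frac{2m}{n}-\mu_2(G)>\mu_1(G)-\frac{2m}{n}$, that is, $\mu_1(G)+\mu_2(G)<\frac{4m}{n}$. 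This contradicts Theorem~\ref{th3,2}, which asserts $\mu_1(G)+\mu_2(G)>\frac{4m}{n}+1$ for every connected graph on $n\ge3$ vertices, and the proof is complete.

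The argument involves no serious obstacle: it is entirely elementary once Theorem~\ref{th3,2} is in hand. The one point needing a little care is the middle step — reading the hypothesis $\sigma_{n-1}(LI(G))>\sigma_n(LI(G))$ as the statement that $\mu_1(G)-\frac{2m}{n}$ is the \emph{strict} minimum among the shifted eigenvalue moduli, and then checking that this forces $\mu_2(G)$ strictly below the average $\frac{2m}{n}$, which is precisely the configuration ruled out by Theorem~\ref{th3,2}.
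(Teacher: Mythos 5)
Your proof is correct and follows essentially the same route as the paper: assume $\sigma_n(LI(G))=\mu_1(G)-\frac{2m}{n}$, deduce from $\sigma_{n-1}(LI(G))>\sigma_n(LI(G))$ that $\left\lvert \mu_2(G)-\frac{2m}{n}\right\rvert>\mu_1(G)-\frac{2m}{n}$, and then split on the sign of $\mu_2(G)-\frac{2m}{n}$, contradicting either $\mu_1(G)\geq\mu_2(G)$ or Theorem~\ref{th3,2}. Your explicit justification that the minimum modulus is attained only at $i=1$ is just a slightly more detailed phrasing of the paper's inequality $\sigma_{n-1}(LI(G))\leq\left\lvert \mu_2(G)-\frac{2m}{n}\right\rvert$.
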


\begin{proof} Suppose that $\sigma_n(LI(G))=\mu_1(G)-\frac{2m}{n}$. Then
$$\sigma_n(LI(G))=\mu_1(G)-\frac{2m}{n}< \sigma_{n-1}(LI(G))\leq \left\lvert \mu_2(G)-\frac{2m}{n}\right\lvert.$$

If $\mu_2(G)\geq \frac{2m}{n}$, then $\mu_1(G)<\mu_2(G)$, a contradiction.
If $\mu_2(G)<\frac{2m}{n}$, then $\mu_1(G)+\mu_2(G)< \frac{4m}{n}$, a contradiction.
Therefore $\sigma_n(LI(G))\neq \mu_1(G)-\frac{2m}{n}$. This completes the proof. $\Box$
\end{proof}

In \cite{GT}, Guo and Tan showed that $2$ is a Laplacian eigenvalue of any tree with perfect matchings. In \cite{AKM}, Akbari et al. provided a necessary and sufficient condition
under which a unicyclic graph with a perfect matching has Laplacian eigenvalue $2$. Thus there exists a tree $G$ or a unicyclic graph $G$ with $n$ vertices such that $\lim\limits_{n\rightarrow \infty}\sigma_{n}(LI(G))=0$.
Further, we propose the following question.

\begin{question}\label{qu3,2} 
Given a set $\mathcal{G}$ of graphs, whether there exists a graph $G\in \mathcal{G}$ such that $\lim\limits_{n\rightarrow \infty}\sigma_{n}(LI(G))=0$ or not.
\end{question}

\begin{theorem}\label{th3,3} 
Let $G$ be a graph with $n$ vertices and $m\geq 1$ edges. For any edge $uv\in E(G)$, we have
$$||LI(G)||_{F_2}\geq \left\lvert \frac{d_u+d_v+\sqrt{(d_u+d_v)^2+4}}{2}-\frac{2m}{n}\right\lvert+\left\lvert \frac{d_u+d_v-\sqrt{(d_u+d_v)^2+4}}{2}-\frac{2m}{n}\right\lvert.$$
\end{theorem}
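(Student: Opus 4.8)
The plan is to exhibit a $2 \times 2$ principal submatrix of $LI(G)$ whose singular values dominate, entrywise in the ordered sense, the right-hand side, and then invoke the interlacing inequality of Lemma~\ref{le2,2}. Fix an edge $uv \in E(G)$ and consider the $2 \times 2$ principal submatrix $B$ of $LI(G)$ indexed by the rows and columns corresponding to $u$ and $v$. Since $LI(G) = L(G) - \frac{2m}{n} I_n$ and $uv$ is an edge, this submatrix is
$$B = \begin{pmatrix} d_u - \frac{2m}{n} & -1 \\ -1 & d_v - \frac{2m}{n} \end{pmatrix}.$$
The first step is therefore to compute the singular values of $B$, equivalently the moduli of its eigenvalues (it is symmetric). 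The eigenvalues of $B$ are $\frac{1}{2}\big(d_u + d_v - \frac{4m}{n}\big) \pm \frac{1}{2}\sqrt{(d_u - d_v)^2 + 4}$, so a direct check is needed that these, shifted back, produce exactly the two terms $\frac{d_u+d_v \pm \sqrt{(d_u+d_v)^2+4}}{2} - \frac{2m}{n}$ claimed in the statement — or rather a lower bound for them. Here I would note that the statement's expression replaces $(d_u-d_v)^2$ by $(d_u+d_v)^2$, which only enlarges the discriminant; so the two quantities $\big|\tfrac{d_u+d_v\pm\sqrt{(d_u+d_v)^2+4}}{2}-\tfrac{2m}{n}\big|$ in fact bound the absolute values of the eigenvalues of the matrix $\left(\begin{smallmatrix} d_u - 2m/n & -1 \\ -1 & d_v - 2m/n\end{smallmatrix}\right)$ from below after one checks the sign bookkeeping. (Alternatively, one compares against the submatrix associated to a star $K_{1,d_u+d_v}$-type gadget, whose Laplacian has eigenvalue $d_u+d_v+1$; I would keep the cleaner $2\times 2$ computation.)

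Next I would set up the interlacing step carefully. Apply Lemma~\ref{le2,2} with $A = LI(G)$ (an $n \times n$ matrix, $m = n$, $n = n$ in the lemma's notation) and $B$ the $2 \times 2$ submatrix just described ($p = q = 2$). The hypotheses $i \le \min\{p,q\}$ and $i \le \min\{p+q-m, p+q-n\}$ are vacuously fine for the left inequality $\alpha_i \ge \beta_i$ when $i = 1, 2$; that gives $\sigma_i(LI(G)) \ge \sigma_i(B)$ for $i=1,2$. Summing over $i = 1, 2$ yields
$$\|LI(G)\|_{F_2} = \sigma_1(LI(G)) + \sigma_2(LI(G)) \ge \sigma_1(B) + \sigma_2(B) = |\lambda_1(B)| + |\lambda_2(B)|,$$
and then the sign-bookkeeping from the first paragraph finishes the proof.

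The main obstacle is the discrepancy between $(d_u - d_v)^2$ (what the actual $2\times 2$ submatrix gives) and $(d_u + d_v)^2$ (what appears in the statement): since $\sqrt{(d_u+d_v)^2+4} \ge \sqrt{(d_u-d_v)^2+4}$, one must verify that enlarging the square root this way still yields a valid \emph{lower} bound for $|\lambda_1(B)| + |\lambda_2(B)|$, which amounts to showing $|x + c| + |x - c|$ is a nondecreasing function of $c \ge 0$ for fixed real $x$ (it equals $\max\{2|x|, 2c\}$). Thus the two terms with the larger discriminant still sum to something $\le |\lambda_1(B)|+|\lambda_2(B)|$, but with the right-hand side of the theorem involving the centered values, one must track that the center $\frac{d_u+d_v}{2}$ versus $\frac{2m}{n}$ comparison does not spoil monotonicity — writing everything as $|X + \sqrt{D+4}\,/2| + |X - \sqrt{D+4}\,/2|$ with $X = \frac{d_u+d_v}{2} - \frac{2m}{n}$ makes this transparent and is the one calculation worth doing in full. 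Everything else is bookkeeping with Lemma~\ref{le2,2}.
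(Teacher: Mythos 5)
Your overall strategy is exactly the paper's: take the $2\times 2$ principal submatrix $B$ of $LI(G)$ indexed by $u,v$, apply Lemma~\ref{le2,2} to get $\sigma_i(LI(G))\ge\sigma_i(B)$ for $i=1,2$, and sum. You also compute correctly that the eigenvalues of $B$ are $\frac{d_u+d_v\pm\sqrt{(d_u-d_v)^2+4}}{2}-\frac{2m}{n}$, i.e.\ the discriminant is $(d_u-d_v)^2+4$, not the $(d_u+d_v)^2+4$ that appears in the statement. Up to that point you match the paper, whose own proof simply asserts that $\sigma_1'+\sigma_2'$ equals the displayed expression; that assertion is wrong as printed and is best read as a typo (the statement should carry $(d_u-d_v)^2+4$), silently propagated from the theorem into the proof.

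The genuine gap is in your bridging step, and it is fatal. Writing $X=\frac{d_u+d_v}{2}-\frac{2m}{n}$ and $c=\frac{1}{2}\sqrt{D+4}$, one has $|X+c|+|X-c|=2\max\{|X|,c\}$, which is \emph{nondecreasing} in $c$; hence enlarging the discriminant from $(d_u-d_v)^2$ to $(d_u+d_v)^2$ makes the two-term sum larger, not smaller, and your conclusion that the enlarged terms ``still sum to something $\le |\lambda_1(B)|+|\lambda_2(B)|$'' reverses the inequality. So interlacing only yields the bound with $(d_u-d_v)^2+4$, and no repair can give the printed bound, because the statement as printed is false: for $P_3$ with $uv$ the centre--leaf edge one has $\|LI(P_3)\|_{F_2}=\frac{5}{3}+\frac{4}{3}=3$ while the right-hand side equals $\sqrt{13}>3$, and for $K_2$ the two sides are $2$ and $2\sqrt{2}$. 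The correct inequality --- which is what your interlacing computation, and the paper's intended argument, actually prove --- is the one with $(d_u-d_v)^2+4$ under the square root; your write-up should state and prove that version rather than attempt the (impossible) monotonicity bridge.
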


\begin{proof}  Let $uv\in E(G)$. By Lemma \ref{le2,2}, we have $\sigma_1(LI(G))\geq \sigma_1'$ and $\sigma_2(LI(G))\geq \sigma_2'$, where $\sigma_1'$, $\sigma_2'$ are the singular values of the matrix
$$\begin{pmatrix}
d_u-\frac{2m}{n} & -1\\
-1 & d_v-\frac{2m}{n}
\end{pmatrix}.$$
Thus
\begin{eqnarray*}
||LI(G)||_{F_2} & = & \sigma_1(LI(G))+\sigma_2(LI(G))\\
& \geq & \sigma_1'+\sigma_2'\\
& = & \left\lvert \frac{d_u+d_v+\sqrt{(d_u+d_v)^2+4}}{2}-\frac{2m}{n}\right\lvert+\left\lvert \frac{d_u+d_v-\sqrt{(d_u+d_v)^2+4}}{2}-\frac{2m}{n}\right\lvert.
\end{eqnarray*}
 This completes the proof. $\Box$
\end{proof}

\begin{theorem}\label{th3,4} 
Let $G$ be a triangle-free graph with $n$ vertices and $m\geq 1$ edges. For any edge $uv\in E(G)$, we have
$$||LI(G)||_{F_2}\geq \sqrt{\Upsilon+2\sqrt{\Psi}},$$
where $\Upsilon=\left(d_u-\frac{2m}{n}\right)^2+\left(d_v-\frac{2m}{n}\right)^2+d_u+d_v$, $\Psi=\left(\left(d_u-\frac{2m}{n}\right)^2+d_u\right)\left(\left(d_v-\frac{2m}{n}\right)^2+d_v\right)-\left(d_u+d_v-\frac{4m}{n}\right)^2$.
\end{theorem}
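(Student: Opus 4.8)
The plan is to mimic the proof of Theorem~\ref{th3,3}, but to extract a larger submatrix of $LI(G)$ that captures not just the edge $uv$ but also the neighbourhoods of $u$ and $v$, using the triangle-free hypothesis to control the structure. Since $G$ is triangle-free, $u$ and $v$ have no common neighbour, so the rows and columns of $LI(G)$ indexed by $\{u,v\}\cup N(u)\cup N(v)$ have a clean block form. First I would consider the $2\times(2+d_u+d_v-2)$ submatrix $B$ of $LI(G)$ consisting of the two rows indexed by $u$ and $v$, restricted to the columns indexed by $u$, $v$, and all other neighbours of $u$ and of $v$ (these column indices are distinct precisely because there are no triangles). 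Concretely $B$ has the diagonal-type entries $d_u-\tfrac{2m}{n}$ and $d_v-\tfrac{2m}{n}$ in the $u,v$ columns, the entry $-1$ linking $u$ and $v$, and a further $d_u-1$ entries equal to $-1$ in row $u$ and $d_v-1$ entries equal to $-1$ in row $v$, sitting in disjoint columns.

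The key computation is then $BB^{T}$, which is a $2\times2$ matrix. Its diagonal entries are $\bigl(d_u-\tfrac{2m}{n}\bigr)^2+1+(d_u-1)=\bigl(d_u-\tfrac{2m}{n}\bigr)^2+d_u$ and similarly $\bigl(d_v-\tfrac{2m}{n}\bigr)^2+d_v$, while the off-diagonal entry comes only from the shared columns $u$ and $v$: it equals $-\bigl(d_u-\tfrac{2m}{n}\bigr)-\bigl(d_v-\tfrac{2m}{n}\bigr)=-\bigl(d_u+d_v-\tfrac{4m}{n}\bigr)$. The squared singular values of $B$ are the eigenvalues of this $2\times2$ matrix, so $\sigma_1'^2+\sigma_2'^2=\operatorname{tr}(BB^T)=\Upsilon$ and $\sigma_1'^2\sigma_2'^2=\det(BB^T)=\Psi$. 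Hence $(\sigma_1'+\sigma_2')^2=\Upsilon+2\sqrt{\Psi}$, i.e. $\sigma_1'+\sigma_2'=\sqrt{\Upsilon+2\sqrt{\Psi}}$.

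To finish, I would invoke Lemma~\ref{le2,2} (the interlacing inequality for singular values of submatrices) to get $\sigma_1(LI(G))\geq\sigma_1'$ and $\sigma_2(LI(G))\geq\sigma_2'$; one must check the index condition of that lemma, which holds here since $p=2$ and we only need $i=1,2$. Therefore $\|LI(G)\|_{F_2}=\sigma_1(LI(G))+\sigma_2(LI(G))\geq\sigma_1'+\sigma_2'=\sqrt{\Upsilon+2\sqrt{\Psi}}$, as claimed. The main obstacle is purely bookkeeping: one must be careful that the columns for $N(u)\setminus\{v\}$ and $N(v)\setminus\{u\}$ are genuinely disjoint (this is exactly where triangle-freeness is used) and that no column is double-counted, since otherwise the off-diagonal and diagonal entries of $BB^T$ would pick up extra cross terms and the stated $\Upsilon,\Psi$ would be wrong. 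A secondary point to verify is that $\Psi\geq0$ so that the square roots are real, which follows from the Cauchy–Schwarz-type positivity of $\det(BB^T)$ for a real matrix $B$.
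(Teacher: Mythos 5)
Your proposal is essentially the paper's proof: the paper takes the two rows of $LI(G)$ indexed by $u$ and $v$ as a $2\times n$ submatrix, uses triangle-freeness to compute the same $BB^T$ with trace $\Upsilon$ and determinant $\Psi$, and applies Lemma \ref{le2,2} to get $\sigma_1(LI(G))+\sigma_2(LI(G))\geq \sigma_1'+\sigma_2'=\sqrt{\Upsilon+2\sqrt{\Psi}}$, exactly as you do. The only difference is your restriction to the $d_u+d_v$ columns indexed by $\{u,v\}\cup N(u)\cup N(v)$; with that smaller $q$ the index condition $i\leq\min\{p+q-m,\,p+q-n\}$ of Lemma \ref{le2,2} as stated would require $d_u+d_v\geq n$ for $i=2$ (so your claim that it holds "since $p=2$" is not quite right), but this is harmless because the omitted columns are zero in rows $u$ and $v$, so keeping all $n$ columns as the paper does gives the same $BB^T$ and the same nonzero singular values while satisfying the condition.
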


\begin{proof} Let $uv\in E(G)$. By Lemma \ref{le2,2}, we have $\sigma_1(LI(G))\geq \sigma_1'$ and $\sigma_2(LI(G))\geq \sigma_2'$, where $\sigma_1'$, $\sigma_2'$ are the singular values of the matrix
$$B=\begin{pmatrix}
d_u-\frac{2m}{n} & -1 & * & \cdots & *\\
-1 & d_v-\frac{2m}{n} & * & \cdots & *
\end{pmatrix}_{2\times n}.$$
Since $G$ is a triangle-free graph, we have
$$BB^T=\begin{pmatrix}
(d_u-\frac{2m}{n})^2+d_u & \frac{4m}{n}-d_u-d_v\\
\frac{4m}{n}-d_u-d_v& (d_v-\frac{2m}{n})^2+d_v
\end{pmatrix}.$$
Thus the eigenvalues $x_1$ and $x_2$ of $BB^T$ are the roots of the following equations:
$$x^2-\left(\left(d_u-\frac{2m}{n}\right)^2+\left(d_v-\frac{2m}{n}\right)^2+d_u+d_v\right)x+~~~~~~~~~~~~~~~~~~~~~~~~~~~$$
$$\left(\left(d_u-\frac{2m}{n}\right)^2+d_u\right)\left(\left(d_v-\frac{2m}{n}\right)^2+d_v\right)-\left(d_u+d_v-\frac{4m}{n}\right)^2=0.$$
By Lemma \ref{le2,2}, we have
\begin{eqnarray*}
||LI(G)||_{F_2} & = & \sigma_1(LI(G))+\sigma_2(LI(G))\\
& \geq & \sigma_1'+\sigma_2'\\
& = & \sqrt{x_1}+\sqrt{x_2}\\
& = & \sqrt{x_1+x_2+2\sqrt{x_1x_2}}\\
& = & \sqrt{\Upsilon+2\sqrt{\Psi}},
\end{eqnarray*}
where $\Upsilon=\left(d_u-\frac{2m}{n}\right)^2+\left(d_v-\frac{2m}{n}\right)^2+d_u+d_v$, $\Psi=\left(\left(d_u-\frac{2m}{n}\right)^2+d_u\right)\left(\left(d_v-\frac{2m}{n}\right)^2+d_v\right)-\left(d_u+d_v-\frac{4m}{n}\right)^2$.
 This completes the proof. $\Box$
\end{proof}

\section{\large  Bounds on  $||LI||_{F_k}$  of a graph}

\begin{theorem}\label{th4,1} 
Let $G$ be a connected graph on $n\geq 3$ vertices and $m$ edges. If at most two of the $k$ vertices with the degree greater than or equal to $\frac{4m}{n}$ are  adjacent, then
$$||LI(G)||_{F_k}=S_k(L(G))-\frac{2km}{n}.$$
\end{theorem}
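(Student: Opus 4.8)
The plan is to understand $\|LI(G)\|_{F_k}$ as the sum of the $k$ largest values among the numbers $\{|\mu_i(G)-\tfrac{2m}{n}|\}_{i=1}^n$, since $LI(G)=L(G)-\tfrac{2m}{n}I_n$ is real symmetric and its singular values are the absolute values of its eigenvalues $\mu_i(G)-\tfrac{2m}{n}$. Writing $\bar\mu=\tfrac{2m}{n}$ (the average Laplacian eigenvalue), the eigenvalues split into those with $\mu_i\ge\bar\mu$ contributing $\mu_i-\bar\mu$ and those with $\mu_i<\bar\mu$ contributing $\bar\mu-\mu_i$. The claimed identity $\|LI(G)\|_{F_k}=S_k(L(G))-\tfrac{2km}{n}=\sum_{i=1}^k(\mu_i(G)-\bar\mu)$ holds precisely when the $k$ largest singular values are exactly $\mu_1-\bar\mu,\dots,\mu_k-\bar\mu$; equivalently, when $\mu_k(G)\ge\bar\mu$ (so all $k$ terms are genuinely nonnegative) and when no ``small'' eigenvalue $\mu_j$ with $j>k$ has $\bar\mu-\mu_j > \mu_k-\bar\mu$, i.e. $\bar\mu-\mu_n(G)=\bar\mu\le\mu_k(G)-\bar\mu$, that is $\mu_k(G)\ge 2\bar\mu=\tfrac{4m}{n}$. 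So the whole theorem reduces to the single inequality
$$\mu_k(G)\ge\frac{4m}{n}.$$

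First I would reduce to this inequality carefully: if $\mu_k(G)\ge\tfrac{4m}{n}$, then for every $i\le k$ we have $|\mu_i-\bar\mu|=\mu_i-\bar\mu\ge\bar\mu$, while for every $j$ the singular value $|\mu_j-\bar\mu|\le\max\{\mu_1-\bar\mu,\bar\mu-0\}\le\mu_k-\bar\mu$ whenever $j>k$ (using $\mu_j\le\mu_k$ on one side and $\bar\mu\le\mu_k-\bar\mu$ on the other). Hence the top $k$ singular values are the first $k$ ``large'' contributions and the identity follows. The converse direction is not needed for the stated equality, so I will not pursue it. This step is routine.

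The main work — and the main obstacle — is proving $\mu_k(G)\ge\tfrac{4m}{n}$ from the hypothesis that at most two of the $k$ vertices of degree $\ge\tfrac{4m}{n}$ are adjacent. The idea is to exhibit an explicit subgraph $H$ of $G$ on these $k$ high-degree vertices (plus their neighbors) whose $k$-th Laplacian eigenvalue is already $\ge\tfrac{4m}{n}$, and invoke Lemma \ref{le2,3}. Let $v_1,\dots,v_k$ be the vertices with $d_{v_i}(G)\ge\tfrac{4m}{n}$ (if fewer than $k$ such vertices exist, then $\mu_k(G)<\tfrac{4m}{n}$ can fail and one must argue separately, or the hypothesis must be read as guaranteeing $k$ such vertices). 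If the $v_i$ are pairwise non-adjacent, take $H$ to be the disjoint union of $k$ stars, the $i$-th a star centered at $v_i$ with its $d_{v_i}$ leaves; then $H$ has $k$ Laplacian eigenvalues equal to $d_{v_i}+1>\tfrac{4m}{n}$ — actually $\ge\tfrac{4m}{n}$, and strictness handled by noting $d_{v_i}\ge\lceil\tfrac{4m}{n}\rceil$ when $\tfrac{4m}{n}$ is not an integer, or directly $d_{v_i}+1>d_{v_i}\ge\tfrac{4m}{n}$. If exactly one adjacent pair exists, say $v_1v_2\in E(G)$, replace the two corresponding stars by a double star $S_{d_{v_1}-1,\,d_{v_2}-1}$ on the edge $v_1v_2$; its two largest Laplacian eigenvalues are the roots of a quadratic, and one checks the larger root exceeds $\max\{d_{v_1},d_{v_2}\}+1$ and the smaller exceeds $\min\{d_{v_1},d_{v_2}\}\ge\tfrac{4m}{n}$ (here Lemma \ref{le2,4} applied to the double star, giving $\mu_2\ge$ its second-largest degree, is the clean tool). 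Either way $H$ is a (not necessarily induced) subgraph of $G$ with $\mu_k(H)\ge\tfrac{4m}{n}$, so $\mu_k(G)\ge\mu_k(H)\ge\tfrac{4m}{n}$ by Lemma \ref{le2,3}, completing the proof.

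The delicate point I expect to need the most care is the bookkeeping when the high-degree vertices share leaves: the stars/double-star I described must be realized as \emph{edge-disjoint} subgraphs of $G$ so that their union embeds in $G$, but two high-degree non-adjacent vertices could share a common neighbor, which is fine (the union of stars is still a subgraph, just with that common neighbor having degree $2$ in $H$), yet Lemma \ref{le2,3}'s ``$p$ vertices'' accounting and the eigenvalue interlacing still go through since we only need $H$ as an abstract subgraph with the right $\mu_k$. I would double-check that the disjoint-union spectrum argument survives when the components overlap in vertices — in that case $H$ is connected through shared leaves and one instead argues via Lemma \ref{le2,4} or a direct quotient-matrix / equitable-partition bound that $\mu_k(H)\ge\tfrac{4m}{n}$ still holds because each $v_i$ retains full degree $d_{v_i}$ in $H$. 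Resolving this overlap case cleanly is the real content; the rest is assembling Lemmas \ref{le2,1}, \ref{le2,3}, \ref{le2,4} and the singular-value/eigenvalue dictionary.
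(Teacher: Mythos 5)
Your proposal follows essentially the same route as the paper's proof: both reduce the identity to showing $\mu_k(G)\ge \frac{4m}{n}$ and then derive this from the hypothesis by embedding stars (pairwise non-adjacent case) and a double star (single adjacent pair) as subgraphs of $G$, invoking Lemma \ref{le2,3} together with Lemma \ref{le2,4}. The overlapping-leaves subtlety you flag at the end is not addressed in the paper either --- its proof simply takes the stars/double star as subgraphs and interlaces --- so your attempt is at least as complete as the published argument.
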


\begin{proof} Let $(d_1(G), d_2(G), \ldots, d_n(G))$ be the degree sequence of $G$. If $k$ vertices with the degree greater than or equal to $\frac{4m}{n}-1$ are not adjacent, then the star $K_{1, s}$ centered on $k$ vertices are subgraphs of $G$, where $s\geq \frac{4m}{n}-1$. By Lemma \ref{le2,3}, we have $\mu_k(G)\geq \mu_1(K_{1,\, s})= \frac{4m}{n}$. Thus we have
$$||LI(G)||_{F_k} = \sum\limits_{i=1}^{k}\sigma_i(LI(G))=\sum\limits_{i=1}^{k}\left\lvert \mu_k(G)-\frac{2m}{n}\right\lvert=S_k(L(G))-\frac{2km}{n}.$$

If two of the $k$ vertices with the degree greater than or equal to $\frac{4m}{n}$ are adjacent, then the star $K_{1, s}$ or the double star $S_{a,\,b}$ are subgraphs of $G$, where $s\geq \frac{4m}{n}-1$ and $a, b \geq \frac{4m}{n}-1$. By Lemmas \ref{le2,3} and \ref{le2,4}, we have $\mu_k(G)\geq \mu_1(K_{1,\, s})= \frac{4m}{n}$ or $\mu_k(G)\geq \mu_2(S_{a,\,b})\geq \frac{4m}{n}$. Thus we have
$$||LI(G)||_{F_k} = \sum\limits_{i=1}^{k}\sigma_i(LI(G))=\sum\limits_{i=1}^{k}\left\lvert \mu_k(G)-\frac{2m}{n}\right\lvert=S_k(L(G))-\frac{2km}{n}.$$

Combining the above arguments, we have the proof. $\Box$
\end{proof}

\begin{theorem}\label{th4,2} 
Let $G$ be a graph on $n$ vertices and $m$ edges, with vertex degrees $d_1(G)\geq d_2(G)\geq \cdots \geq d_n(G)$. If the subgraph $H$ of $G$ is a threshold graph with $d_k(H)\geq \frac{4m}{n}-1$, then
$$||LI(G)||_{F_k}=S_k(L(G))-\frac{2km}{n}.$$
\end{theorem}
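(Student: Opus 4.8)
The plan is to reduce the whole statement to the single spectral inequality $\mu_k(G)\ge \frac{4m}{n}$; once this is established, the evaluation of $||LI(G)||_{F_k}$ is entirely mechanical and proceeds exactly as in the proof of Theorem \ref{th4,1}. Thus I would split the argument into two stages: first use the threshold subgraph $H$ to lower-bound $\mu_k(G)$, and then carry out the singular-value bookkeeping for $LI(G)=L(G)-\frac{2m}{n}I_n$.

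For the first stage, I would apply Lemma \ref{le2,5} to $H$: being a threshold graph, $H$ satisfies $\mu_i(H)=d_i(H)+1$ for $1\le i\le |V(H)|-1$, so taking $i=k$ gives $\mu_k(H)=d_k(H)+1\ge \frac{4m}{n}$ by hypothesis. (Here $|V(H)|\ge k$ since $d_k(H)$ is assumed defined; in the boundary case $|V(H)|=k$ one first adjoins to $H$ an isolated vertex of $G$ lying outside $V(H)$, which keeps $H$ a threshold subgraph of $G$ and does not change $d_k$, so that the index range of Lemma \ref{le2,5} applies.) Then Lemma \ref{le2,3}, the monotonicity of Laplacian eigenvalues under passing to a (not necessarily induced) subgraph on at least $k$ vertices, yields $\mu_k(G)\ge \mu_k(H)\ge \frac{4m}{n}$.

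For the second stage, note that $LI(G)$ is real symmetric with eigenvalues $\mu_i(G)-\frac{2m}{n}$, so $\sigma_i(LI(G))$ is the $i$-th largest element of $\{\,|\mu_j(G)-\frac{2m}{n}|:1\le j\le n\,\}$. Since $\mu_1(G)\ge\cdots\ge\mu_k(G)\ge \frac{4m}{n}$, for $1\le i\le k$ we have $\mu_i(G)-\frac{2m}{n}\ge \frac{2m}{n}>0$, while for $j>k$ we have $0\le \mu_j(G)\le \mu_k(G)$ and hence $|\mu_j(G)-\frac{2m}{n}|\le \max\{\frac{2m}{n},\,\mu_k(G)-\frac{2m}{n}\}=\mu_k(G)-\frac{2m}{n}$. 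Consequently the $k$ largest singular values of $LI(G)$ are exactly $\mu_1(G)-\frac{2m}{n},\dots,\mu_k(G)-\frac{2m}{n}$, and summing them gives $||LI(G)||_{F_k}=\sum_{i=1}^{k}\left(\mu_i(G)-\frac{2m}{n}\right)=S_k(L(G))-\frac{2km}{n}$.

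I do not expect a genuine obstacle here: the entire content is the transition from the degree condition on $H$ to $\mu_k(G)\ge\frac{4m}{n}$, which is precisely what Lemmas \ref{le2,5} and \ref{le2,3} are designed to furnish. The only points requiring a little care are the boundary case $|V(H)|=k$ (handled by padding $H$ with an isolated vertex, or by observing it forces a degenerate configuration) and the fact that the estimate $|\mu_j(G)-\frac{2m}{n}|\le \mu_k(G)-\frac{2m}{n}$ for $j>k$ genuinely uses $\mu_k(G)\ge\frac{4m}{n}$ rather than merely $\mu_k(G)>\frac{2m}{n}$.
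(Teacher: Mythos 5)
Your proof follows the paper's argument exactly: Lemma \ref{le2,5} gives $\mu_k(H)=d_k(H)+1\ge\frac{4m}{n}$, Lemma \ref{le2,3} lifts this to $\mu_k(G)\ge\mu_k(H)\ge\frac{4m}{n}$, and the singular-value bookkeeping for $LI(G)=L(G)-\frac{2m}{n}I_n$ then yields $||LI(G)||_{F_k}=S_k(L(G))-\frac{2km}{n}$. Your added care about the index range of Lemma \ref{le2,5} when $|V(H)|=k$ and about why the $k$ largest singular values are precisely $\mu_i(G)-\frac{2m}{n}$ for $1\le i\le k$ makes explicit what the paper passes over in one line, but the route is the same.
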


\begin{proof} If $d_k(H)\geq \frac{4m}{n}-1$, by Lemmas \ref{le2,3} and \ref{le2,5}, then $\mu_k(G)\geq \mu_k(H)\geq \frac{4m}{n}$. Thus we have
$$||LI(G)||_{F_k} = \sum\limits_{i=1}^{k}\sigma_i(LI(G))=\sum\limits_{i=1}^{k}\left\lvert \mu_k(G)-\frac{2m}{n}\right\lvert=S_k(L(G))-\frac{2km}{n}.$$
This completes the proof. $\Box$
\end{proof}

\begin{theorem}\label{th4,3} 
Let $G$ be a graph with $n$ vertices and $m$ edges. Then
$$||LI(G)||_{F_k}\leq S_{k}(L(\overline{G}))+\frac{2m}{n}+(k-1)(n-\frac{2m}{n}).$$
The equality holds if $G=K_n$.
\end{theorem}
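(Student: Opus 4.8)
The plan is to pass to the complement. Write $d:=\frac{2m}{n}$ and let $J_n$ be the all‑ones matrix. From the identity $L(G)+L(\overline G)=L(K_n)=nI_n-J_n$, together with the fact that $L(\overline G)$ and $J_n$ share the eigenvector $\mathbf 1$, one sees that on $\mathbf 1^{\perp}$ the matrix $L(G)$ acts as $nI_n-L(\overline G)$ while $L(G)\mathbf 1=0$; equivalently $\mu_j(G)=n-\mu_{n-j}(\overline G)$ for $1\le j\le n-1$ and $\mu_n(G)=0$. Hence the eigenvalues of $LI(G)=L(G)-dI_n$ are $-d$ together with $(n-d)-\mu_i(\overline G)$ for $1\le i\le n-1$, so the multiset of singular values of $LI(G)$ is
$$\{\,d\,\}\;\cup\;\bigl\{\,\lvert(n-d)-\mu_i(\overline G)\rvert:1\le i\le n-1\,\bigr\}.$$

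Next I would identify the $k$ largest of these. Since $\mu_1(\overline G)\ge\cdots\ge\mu_{n-1}(\overline G)$, the numbers $\mu_i(\overline G)-(n-d)$ (over the indices with $\mu_i(\overline G)\ge n-d$) form a non‑increasing list, and the numbers $(n-d)-\mu_i(\overline G)$ (over the indices with $\mu_i(\overline G)<n-d$) are non‑increasing as $i$ runs from $n-1$ downwards. A merge argument then shows that, for suitable integers $a,c\ge 0$ and $\varepsilon\in\{0,1\}$ with $a+c+\varepsilon=k$ (here $\varepsilon$ records whether $d$ is among the top $k$),
$$||LI(G)||_{F_k}=\sum_{i=1}^{a}\bigl(\mu_i(\overline G)-(n-d)\bigr)+\sum_{i=1}^{c}\bigl((n-d)-\mu_{n-i}(\overline G)\bigr)+\varepsilon d.$$
Using $\sum_{i=1}^{a}\bigl(\mu_i(\overline G)-(n-d)\bigr)=S_a(L(\overline G))-a(n-d)$ and $\sum_{i=1}^{c}\mu_{n-i}(\overline G)=2|E(\overline G)|-S_{n-1-c}(L(\overline G))\ge 0$ (because $\sum_{i=1}^{n-1}\mu_i(\overline G)=2|E(\overline G)|$), this reduces to
$$||LI(G)||_{F_k}\le S_a(L(\overline G))+(c-a)(n-d)+\varepsilon d.$$

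The rest is case analysis, using $S_a(L(\overline G))\le S_k(L(\overline G))$ (valid since $a\le k$) and $n-d\ge 0$. If $\varepsilon=1$ then $c-a\le c\le k-1$, so the bound is $\le S_k(L(\overline G))+(k-1)(n-d)+d$, as wanted. If $\varepsilon=0$ and $a\ge 1$ then $c-a=k-2a$ and $(1-2a)(n-d)\le 0\le d$, giving $(k-2a)(n-d)\le(k-1)(n-d)+d$, again as wanted. The delicate case is $\varepsilon=0$, $a=0$: there the bound reads $||LI(G)||_{F_k}\le k(n-d)$, and it suffices to prove $n-2d\le S_k(L(\overline G))$. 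If $m\ge n^2/4$ the left side is $\le 0$ and we are done; if $m<n^2/4$ then $G\ne K_n$, so $\overline G$ has an edge and Lemma~\ref{le2,1} yields $\mu_1(\overline G)\ge\Delta(\overline G)+1=n-\delta(G)\ge n-\frac{2m}{n}\ge n-2d$, whence $S_k(L(\overline G))\ge\mu_1(\overline G)\ge n-2d$. For $G=K_n$ the complement is edgeless, so $S_k(L(\overline G))=0$ and $d=n-1$, while the singular values of $LI(K_n)$ are $n-1$ once and $1$ with multiplicity $n-1$; hence $||LI(K_n)||_{F_k}=(n-1)+(k-1)=n+k-2$, which equals the right‑hand side, so the inequality is sharp at $K_n$.

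The main obstacle I expect is precisely this last case ($d$ not among the top $k$ and no ``large'' eigenvalue of $\overline G$ used): the crude estimate there loses a term of size $n-2d$, and recovering it needs the lower bound $\mu_1(\overline G)\ge n-d$ from Lemma~\ref{le2,1}, which applies exactly because this case forces $m<n^2/4$ and hence $G\ne K_n$. A secondary point needing care is to justify that the decomposition of the top $k$ singular values in the second paragraph is legitimate even when some of the listed quantities coincide or when one of the two monotone blocks is short; in every such situation any admissible triple $(a,c,\varepsilon)$ works and the displayed inequality still holds.
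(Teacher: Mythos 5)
Your proposal is correct, but it takes a genuinely different (and more laborious) route than the paper. The paper's proof is a one-liner: writing $-L(\overline{G})=LI(G)+W_n$ with $W_n=(\frac{2m}{n}-n)I_n+J_n$, it invokes the triangle inequality for the Ky Fan $k$-norm, $||LI(G)||_{F_k}\le ||L(\overline{G})||_{F_k}+||W_n||_{F_k}$, uses $||L(\overline{G})||_{F_k}=S_k(L(\overline{G}))$, and evaluates $||W_n||_{F_k}$ as $\frac{2m}{n}+(k-1)(n-\frac{2m}{n})$ from the singular values $\frac{2m}{n}$ (once) and $n-\frac{2m}{n}$ (with multiplicity $n-1$). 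You instead transfer the whole spectrum to the complement via $\mu_j(G)=n-\mu_{n-j}(\overline{G})$, sort the singular values of $LI(G)$ into two monotone blocks plus the value $\frac{2m}{n}$, and run a case analysis on $(a,c,\varepsilon)$. What your route buys is precisely the regime $m<\frac{n^2}{4}$ with $k\le n-1$: there $n-\frac{2m}{n}>\frac{2m}{n}$, so in fact $||W_n||_{F_k}=k(n-\frac{2m}{n})$, and the paper's evaluation of $||W_n||_{F_k}$ --- hence its one-line derivation of the stated bound --- only applies when $m\ge\frac{n^2}{4}$ or $k=n$; in the remaining regime the triangle inequality alone yields only the weaker bound $S_k(L(\overline{G}))+k(n-\frac{2m}{n})$. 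Your extra estimate $S_k(L(\overline{G}))\ge\mu_1(\overline{G})\ge n-\delta(G)\ge n-\frac{2m}{n}$ (Lemma \ref{le2,1} applied to $\overline{G}$, which has an edge because $m<\binom{n}{2}$) is exactly what closes that case, and your computation at $K_n$ confirms the equality claim, so your argument is in this sense more complete, while the paper's is shorter and cleaner where it applies. The only point to tighten in a final write-up is the ``merge'' step: state explicitly that, after a suitable tie-breaking, the top $k$ singular values may be taken as prefixes of the two monotone blocks together with possibly $\frac{2m}{n}$ --- a detail you already flag and which is routine.
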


\begin{proof} Let $J_n$ be the all ones matrix of size $n$. Then $-L(\overline{G})=LI(G)+W_n$ and $W_n=(\frac{2m}{n}-n)I_n+J_n$.
Hence, the triangle inequality implies that
$$ ||LI(G)||_{F_k}\leq  ||L(\overline{G})||_{F_k}+||W_n||_{F_k},$$
that is,
$$||LI(G)||_{F_k}\leq S_{k}(L(\overline{G}))+\frac{2m}{n}+(k-1)(n-\frac{2m}{n}).$$
This completes the proof. $\Box$
\end{proof}

\begin{theorem}\label{th4,4} 
Let $G\neq K_n$ be a graph on $n$ vertices and $m\geq 1$ edges. If $m\geq \frac{n^2}{4}$ and $k\geq 2$, then
$$||LI(G)||_{F_k}\leq \frac{2m}{n}+(k-1)spr(L(G))$$
with equality if and only if $G=G_1\vee (K_1\cup G_2)$, $|V(G_2)|=n-\frac{2m}{n}-1$, $\mu_{n-1}(G_1)\geq \frac{4m}{n}-n$ and $\sigma_2(LI(G))=\cdots=\sigma_k(LI(G))=\mu_1(G)-\frac{2m}{n}$.
\end{theorem}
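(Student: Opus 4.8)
The plan is to work throughout with the fact that the singular values of $LI(G)$ are the numbers $|\mu_i(G)-\tfrac{2m}{n}|$, reindexed in decreasing order. Two preliminary observations do most of the work. Since $m\ge \tfrac{n^2}{4}$, Lemma~\ref{le2,1} gives $0\le \mu_i(G)\le n\le \tfrac{4m}{n}$, hence $|\mu_i(G)-\tfrac{2m}{n}|\le \tfrac{2m}{n}$ for every $i$, with equality for $\mu_n(G)=0$; so $\sigma_1(LI(G))=\tfrac{2m}{n}$ (this is Theorem~\ref{th3,1}(i)). Next, and this is where $G\ne K_n$ enters, the complement $\overline G$ has $\overline m=\binom n2-m\ge 1$ edges, so $n-\mu_{n-1}(G)=\mu_1(\overline G)\ge \Delta(\overline G)+1\ge \tfrac{2\overline m}{n}+1=n-\tfrac{2m}{n}$ by Lemma~\ref{le2,1}; thus $\mu_{n-1}(G)\le \tfrac{2m}{n}$. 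Also $\mu_1(G)\ge \Delta(G)+1>\tfrac{2m}{n}$ since $\Delta(G)\ge \tfrac{2m}{n}$.

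For the inequality, note that since $i\mapsto \mu_i(G)$ is nonincreasing and $\mu_{n-1}(G)\le \tfrac{2m}{n}\le \mu_1(G)$, among the numbers $|\mu_i(G)-\tfrac{2m}{n}|$ the largest is $\tfrac{2m}{n}$ (from $\mu_n=0$) and the second largest is $\max\{\mu_1(G)-\tfrac{2m}{n},\,\tfrac{2m}{n}-\mu_{n-1}(G)\}$, attained at $i=1$ or $i=n-1$. Hence for every $i\ge 2$,
$$\sigma_i(LI(G))\le \sigma_2(LI(G))=\max\Big\{\mu_1(G)-\tfrac{2m}{n},\ \tfrac{2m}{n}-\mu_{n-1}(G)\Big\}\le \mu_1(G)-\mu_{n-1}(G)=spr(L(G)),$$
the last inequality using both facts of the previous paragraph. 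Summing over $i=1,\dots,k$ gives $||LI(G)||_{F_k}=\tfrac{2m}{n}+\sum_{i=2}^{k}\sigma_i(LI(G))\le \tfrac{2m}{n}+(k-1)\,spr(L(G))$.

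For equality one needs $\sigma_i(LI(G))=spr(L(G))$ for all $2\le i\le k$. In the identity $\max\{a,b\}=a+b$ with $a=\mu_1(G)-\tfrac{2m}{n}>0$ and $b=\tfrac{2m}{n}-\mu_{n-1}(G)\ge 0$ this forces $b=0$, i.e.\ $\mu_{n-1}(G)=\tfrac{2m}{n}$. Given this, $\mu_1(G),\dots,\mu_{n-1}(G)\ge \tfrac{2m}{n}$, so the singular values of $LI(G)$ listed in order are $\tfrac{2m}{n},\,\mu_1(G)-\tfrac{2m}{n},\,\mu_2(G)-\tfrac{2m}{n},\dots$, whence $\sigma_i(LI(G))=\mu_{i-1}(G)-\tfrac{2m}{n}$ for $i\ge 2$, and the conditions $\sigma_2(LI(G))=\cdots=\sigma_k(LI(G))=\mu_1(G)-\tfrac{2m}{n}$ become $\mu_1(G)=\cdots=\mu_{k-1}(G)$. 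It then remains to identify the graphs with $\mu_{n-1}(G)=\tfrac{2m}{n}$. If $\mu_{n-1}(G)=\tfrac{2m}{n}>0$ then $G$ is connected, and Fiedler's inequality $\mu_{n-1}(G)\le \kappa(G)\le \delta(G)$ (valid for $G\ne K_n$) together with $\delta(G)\le \tfrac{2m}{n}$ forces $\delta(G)=\tfrac{2m}{n}$, equal to the average degree, so $G$ is $\tfrac{2m}{n}$-regular; for a connected regular graph $\mu_{n-1}(G)$ equals the degree exactly when the second largest adjacency eigenvalue is $0$, i.e.\ (as $G\ne K_n$) exactly when $G$ is a balanced complete multipartite graph $K_{t\times s}$ with $s\ge 2$, and $K_{t\times s}=K_{(t-1)\times s}\vee(K_1\cup \overline{K_{s-1}})$ has the required shape with $|V(\overline{K_{s-1}})|=s-1=n-\tfrac{2m}{n}-1$ and $\mu_{n-1}(K_{(t-1)\times s})=(t-2)s=\tfrac{4m}{n}-n$. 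Conversely, in $G=G_1\vee(K_1\cup G_2)$ the size condition forces $|V(G_1)|=n_1=\tfrac{2m}{n}$, and the Laplacian spectrum of the join $G_1\vee H$ consists of $0$, $n$, the numbers $|V(H)|+\mu_i(G_1)$ ($1\le i\le n_1-1$) and $|V(G_1)|+\mu_j(H)$ ($1\le j\le |V(H)|-1$); with $H=K_1\cup G_2$ this list is $0$, $n$, $(|V(G_2)|+1)+\mu_i(G_1)$, $n_1$, and $n_1+\mu_j(G_2)$, whose second smallest member is $\min\{n_1,\ (|V(G_2)|+1)+\mu_{n_1-1}(G_1)\}$, and this equals $n_1=\tfrac{2m}{n}$ precisely when $\mu_{n_1-1}(G_1)\ge n_1-1-|V(G_2)|=\tfrac{4m}{n}-n$. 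Combining this equivalence with the equality analysis yields the stated characterization.

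I expect the main difficulty to be this last step, translating the spectral condition $\mu_{n-1}(G)=\tfrac{2m}{n}$ into the join description: it requires Fiedler's bound, the classification of regular graphs whose second adjacency eigenvalue vanishes, and a careful reading of the join Laplacian spectrum, together with disposing of the degenerate cases (for instance $m=\tfrac{n^2}{4}$, or $n_1$ or $|V(G_2)|$ too small, in which the purported extremal graph collapses to $K_n$ and is excluded). The inequality itself, by contrast, is short once $\mu_{n-1}(G)\le \tfrac{2m}{n}$ is in hand.
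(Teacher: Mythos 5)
Your proposal is correct, and it diverges from the paper in an instructive way. For the inequality itself the two arguments share the same skeleton: both start from $\sigma_1(LI(G))=\frac{2m}{n}$ (Theorem~\ref{th3,1}(i)) and bound the remaining singular values via the two quantities $\mu_1(G)-\frac{2m}{n}$ and $\frac{2m}{n}-\mu_{n-1}(G)$; but where the paper runs these through a Cauchy--Schwarz step with parameters $x+y=k-1$, you bound each $\sigma_i$ ($i\ge 2$) directly by $\sigma_2=\max\{\mu_1-\frac{2m}{n},\,\frac{2m}{n}-\mu_{n-1}\}\le spr(L(G))$, which is cleaner, and -- importantly -- you actually prove the non-negativity $\mu_{n-1}(G)\le\frac{2m}{n}$ (via the complement and Lemma~\ref{le2,1}, or via Fiedler), which is exactly where $G\ne K_n$ enters and which the paper's chain needs but never states. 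The real difference is in the equality case: the paper simply cites Theorem 1 of \cite{LL} for the characterization of graphs with $\mu_{n-1}(G)=\frac{2m}{n}$, whereas you re-derive it from classical facts: Fiedler's bound $\mu_{n-1}\le\kappa\le\delta$ forces $\delta=\frac{2m}{n}$ and hence regularity, Smith's classification of connected graphs with second adjacency eigenvalue at most $0$ then gives balanced complete multipartite graphs $K_{t\times s}$ with $s\ge 2$, and your join-spectrum computation confirms these are exactly the graphs of the form $G_1\vee(K_1\cup G_2)$ with $|V(G_2)|=n-\frac{2m}{n}-1$ and $\mu_{n_1-1}(G_1)\ge\frac{4m}{n}-n$. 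This costs you two classical external results (which you should cite explicitly: Fiedler 1973 for the vertex-connectivity bound, Smith 1970 for the $\lambda_2\le 0$ classification) but buys a self-contained proof that does not depend on the less accessible reference \cite{LL}, and in fact yields a sharper statement of the extremal family: for $G\ne K_n$ the equality graphs are precisely the balanced complete multipartite graphs, with the additional condition $\sigma_2=\cdots=\sigma_k=\mu_1-\frac{2m}{n}$ amounting to $\mu_1=\cdots=\mu_{k-1}$. Your handling of the degenerate cases ($|V(G_2)|=0$ collapsing to the excluded $K_n$) is also consistent with the theorem's hypotheses.
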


\begin{proof} Since $m\geq \frac{n^2}{4}$, by Theorem \ref{th3,1}, we have $\sigma_1(LI(G))=\frac{2m}{n}$. Let $x+y=k-1$, $0\leq x, y\leq k-1$. By the Cauchy-Schwarz inequality, we have
\begin{eqnarray*}
||LI(G)||_{F_k} & = & \sum\limits_{i=1}^{k}\sigma_i(LI(G))\\
& \leq &\frac{2m}{n}+x(\mu_1(G)-\frac{2m}{n})+y(\frac{2m}{n}-\mu_{n-1}(G))\\
& \leq & \frac{2m}{n} +\sqrt{x^2+y^2}\sqrt{(\mu_1(G)-\frac{2m}{n})^2+(\frac{2m}{n}-\mu_{n-1}(G))^2}\\
& \leq & \frac{2m}{n}+(k-1)spr(L(G))
\end{eqnarray*}
with equality if and only if $\sigma_2(LI(G))=\cdots=\sigma_k(LI(G))=\mu_1(G)-\frac{2m}{n}$, $x=k-1$ and $\mu_{n-1}(G)=\frac{2m}{n}$. From Theorem 1 in \cite{LL}, we have $\mu_{n-1}(G)=\frac{2m}{n}$ if and only if $G=G_1\vee (K_1\cup G_2)$, $|V(G_2)|=n-\frac{2m}{n}-1$ and $\mu_{n-1}(G_1)\geq \frac{4m}{n}-n$.
This completes the proof. $\Box$
\end{proof}

\begin{remark}
If $G=C_4\vee (K_1\cup K_1)$, then
$||LI(G)||_{F_k}= \frac{2m}{n}+(k-1)spr(L(G))$
for $k=1, 2, 3$.
\end{remark}

\begin{theorem}\label{th4,5} 
Let $G$ be a graph with $n$ vertices and $m$ edges. If $k\geq 2$ and $m\geq \frac{n^2}{4}$, then
$$||LI(G)||_{F_k}\leq \frac{2m}{n}+\sqrt{(k-1)\left(2m+Z_1-\frac{4m^2}{n}-\frac{4m^2}{n^2}\right)}. \eqno{(4.2)} $$
The equality holds in $(4.2)$ if and only if $G$ is a graph satisfying
$$\left\{
\begin{array}{llll}
\sigma_2(LI(G))=\sigma_3(LI(G))=\cdots=\sigma_k(LI(G)) ,\\
\sigma_{k+1}(LI(G))=\sigma_{k+2}(LI(G))=\cdots=\sigma_n(LI(G))=0.\\
\end{array}
\right.
$$
\end{theorem}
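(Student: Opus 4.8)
The plan is to bound $\|LI(G)\|_{F_k}$ by combining the fact that $\sigma_1(LI(G))=\frac{2m}{n}$ (Theorem~\ref{th3,1}, using $m\geq \frac{n^2}{4}$) with a Cauchy--Schwarz estimate on the remaining $k-1$ singular values, controlled by the Frobenius norm of $LI(G)$. First I would write
$$\|LI(G)\|_{F_k}=\sigma_1(LI(G))+\sum_{i=2}^{k}\sigma_i(LI(G))=\frac{2m}{n}+\sum_{i=2}^{k}\sigma_i(LI(G)).$$
Applying Cauchy--Schwarz to the $k-1$ terms $\sigma_2,\dots,\sigma_k$ gives
$$\sum_{i=2}^{k}\sigma_i(LI(G))\leq \sqrt{k-1}\,\Bigl(\sum_{i=2}^{k}\sigma_i(LI(G))^2\Bigr)^{1/2}\leq \sqrt{k-1}\,\Bigl(\sum_{i=2}^{n}\sigma_i(LI(G))^2\Bigr)^{1/2}.$$

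The key computational step is to evaluate $\sum_{i=1}^n \sigma_i(LI(G))^2=\mathrm{tr}\bigl(LI(G)^2\bigr)$ and subtract off $\sigma_1(LI(G))^2=\frac{4m^2}{n^2}$. Since $LI(G)=L(G)-\frac{2m}{n}I_n$, I would expand
$$\mathrm{tr}\bigl(LI(G)^2\bigr)=\mathrm{tr}\bigl(L(G)^2\bigr)-\frac{4m}{n}\,\mathrm{tr}\bigl(L(G)\bigr)+\frac{4m^2}{n^2}\,n.$$
Using the standard identities $\mathrm{tr}(L(G))=\sum_i d_i=2m$ and $\mathrm{tr}(L(G)^2)=\sum_i d_i^2+\sum_i d_i=Z_1+2m$, this becomes
$$\mathrm{tr}\bigl(LI(G)^2\bigr)=Z_1+2m-\frac{4m}{n}\cdot 2m+\frac{4m^2}{n}=Z_1+2m-\frac{8m^2}{n}+\frac{4m^2}{n}=2m+Z_1-\frac{4m^2}{n}.$$
Therefore $\sum_{i=2}^{n}\sigma_i(LI(G))^2=2m+Z_1-\frac{4m^2}{n}-\frac{4m^2}{n^2}$, and substituting into the Cauchy--Schwarz bound yields exactly $(4.2)$.

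For the equality characterization I would trace the two inequalities used. The first Cauchy--Schwarz step $\sum_{i=2}^k \sigma_i\leq\sqrt{k-1}(\sum_{i=2}^k\sigma_i^2)^{1/2}$ is an equality if and only if $\sigma_2(LI(G))=\sigma_3(LI(G))=\cdots=\sigma_k(LI(G))$. The second step, where I enlarged $\sum_{i=2}^{k}\sigma_i^2$ to $\sum_{i=2}^{n}\sigma_i^2$, is an equality if and only if $\sigma_{k+1}(LI(G))=\cdots=\sigma_n(LI(G))=0$. Combining these two conditions gives precisely the stated system. The main obstacle I anticipate is purely bookkeeping: keeping the $\frac{2m}{n}$ versus $\frac{4m^2}{n}$ versus $\frac{4m^2}{n^2}$ terms straight when expanding the trace and subtracting $\sigma_1^2$, and making sure the hypothesis $m\geq\frac{n^2}{4}$ is invoked (via Theorem~\ref{th3,1}) exactly where it is needed — namely to guarantee $\sigma_1(LI(G))=\frac{2m}{n}$ so that the leading term splits off cleanly. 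No deeper structural input is required, since both equality conditions are read directly off the two elementary inequalities.
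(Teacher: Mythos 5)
Your proposal is correct and follows essentially the same route as the paper: split off $\sigma_1(LI(G))=\frac{2m}{n}$ via Theorem~\ref{th3,1}, apply Cauchy--Schwarz to $\sigma_2,\dots,\sigma_k$, enlarge to the full tail using $\sum_{i=1}^{n}\sigma_i^2(LI(G))=2m+Z_1-\frac{4m^2}{n}$, and read the equality conditions off the two inequalities. The only difference is that you derive the trace identity explicitly (correctly), whereas the paper simply asserts it.
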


\begin{proof} Since $m\geq \frac{n^2}{4}$, by Theorem \ref{th3,1}, we have $\sigma_1(LI(G))=\frac{2m}{n}$. Since $\sum_{i=1}^{n}\sigma_i^2(LI(G))=2m+Z_1-\frac{4m^2}{n}$,
by the Cauchy-Schwarz inequality, we have
\begin{eqnarray*}
||LI(G)||_{F_k} & = & \sum\limits_{i=1}^{k}\sigma_i(LI(G))\\
& \leq &\frac{2m}{n}+\sqrt{(k-1)\sum\limits_{i=2}^{k}\sigma_i^2(LI(G))}\\
& \leq & \frac{2m}{n}+\sqrt{(k-1)\sum\limits_{i=2}^{n}\sigma_i^2(LI(G))}\\
& = & \frac{2m}{n}+\sqrt{(k-1)\left(2m+Z_1-\frac{4m^2}{n}-\frac{4m^2}{n^2}\right)}.
\end{eqnarray*}
Hence the equality holds in (4.2) if and only if $G$ is a graph satisfying
$$\left\{
\begin{array}{llll}
\sigma_2(LI(G))=\sigma_3(LI(G))=\cdots=\sigma_k(LI(G)) ,\\
\sigma_{k+1}(LI(G))=\sigma_{k+2}(LI(G))=\cdots=\sigma_n(LI(G))=0.\\
\end{array}
\right.
$$
The proof is completed. $\Box$
\end{proof}

\begin{remark}
For a fixed $k$, we can use complete regular $r$-partite graphs or threshold graphs to construct graphs such that the equality holds in (4.2). However, it is an open problem to find all the graphs such that the equality holds in (4.2).
\end{remark}

\begin{theorem}\label{th4,6} 
Let $G$ be a graph with $n$ vertices and $m>1$ edges, and let $d_2$ be the second largest degree of $G$. If $d_2\geq \frac{4m}{n}$, then
$$||LI(G)||_{F_k}\leq kn-\frac{2km}{n}.\eqno{(4.3)}$$
\end{theorem}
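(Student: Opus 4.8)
The plan is to prove the bound termwise, reducing it to the elementary fact that, under the hypothesis, every singular value of $LI(G)$ is at most $n-\frac{2m}{n}$. First I would record that the eigenvalues of $LI(G)$ are exactly $\mu_1(G)-\frac{2m}{n},\dots,\mu_n(G)-\frac{2m}{n}$, so that $\sigma_1(LI(G)),\dots,\sigma_n(LI(G))$ are the numbers $\bigl|\mu_1(G)-\frac{2m}{n}\bigr|,\dots,\bigl|\mu_n(G)-\frac{2m}{n}\bigr|$ written in decreasing order.

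Next I would extract the restriction on $m$ that is implicit in the hypothesis. For any simple graph on $n$ vertices one has $d_2\le n-1$, so $d_2\ge\frac{4m}{n}$ forces $\frac{4m}{n}\le n-1<n$, i.e. $m<\frac{n^2}{4}$, equivalently $\frac{2m}{n}<n-\frac{2m}{n}$. This is essentially the only role the assumption on $d_2$ plays in the inequality itself: it rules out the dense regime $m\ge\frac{n^2}{4}$, in which $\sigma_1(LI(G))=\frac{2m}{n}\ge n-\frac{2m}{n}$ by Theorem \ref{th3,1}(i), so that the claimed bound would already fail for $k=1$.

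The inequality then follows at once. Since $0\le\mu_i(G)\le n$ for every $i$ (Lemma \ref{le2,1} together with the positive semidefiniteness of $L(G)$), each number $\mu_i(G)-\frac{2m}{n}$ lies in $\bigl[-\frac{2m}{n},\,n-\frac{2m}{n}\bigr]$, and hence $\sigma_i(LI(G))\le\max\bigl\{\frac{2m}{n},\,n-\frac{2m}{n}\bigr\}=n-\frac{2m}{n}$ by the previous step. Adding up the $k$ largest singular values gives
$$||LI(G)||_{F_k}=\sum_{i=1}^{k}\sigma_i(LI(G))\le k\Bigl(n-\frac{2m}{n}\Bigr)=kn-\frac{2km}{n}.$$

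I do not expect a genuine obstacle in the inequality; the only point needing care is the passage to $m<\frac{n^2}{4}$ via $d_2\le n-1$, which makes the relevant maximum equal to $n-\frac{2m}{n}$. The substantive part, which I would take up afterwards, is the equality analysis: equality forces $\bigl|\mu_i(G)-\frac{2m}{n}\bigr|=n-\frac{2m}{n}$ for $i=1,\dots,k$, and since the alternative value $\mu_i(G)=\frac{4m}{n}-n$ is negative it must be that $\mu_1(G)=\cdots=\mu_k(G)=n$; this means $\overline{G}$ has at least $k+1$ connected components, i.e. $G$ is a join of at least $k+1$ graphs, and one then has to reconcile this with $d_2\ge\frac{4m}{n}$ to pin down the extremal graphs — that is where the real work lies.
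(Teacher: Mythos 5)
Your proof is correct, and it takes a genuinely different (and more elementary) route than the paper. The paper invokes Lemma \ref{le2,4} to get $\mu_2(G)\geq d_2\geq \frac{4m}{n}$, concludes that $\sigma_1(LI(G))=\mu_1(G)-\frac{2m}{n}$ and $\sigma_i(LI(G))\leq \mu_2(G)-\frac{2m}{n}$ for $i\geq 2$, and then applies $\mu_1(G),\mu_2(G)\leq n$ from Lemma \ref{le2,1}; this yields the sharper intermediate estimate $||LI(G)||_{F_k}\leq \mu_1(G)+(k-1)\mu_2(G)-\frac{2km}{n}$ before relaxing to $kn-\frac{2km}{n}$. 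You instead bound every singular value uniformly: from $0\leq \mu_i(G)\leq n$ each $\sigma_i(LI(G))\leq \max\{\frac{2m}{n},\,n-\frac{2m}{n}\}$, and the hypothesis enters only through $d_2\leq n-1$, which forces $m<\frac{n^2}{4}$ and hence makes this maximum equal to $n-\frac{2m}{n}$. Your argument buys two things: it does not need Lemma \ref{le2,4}, which is stated only for connected graphs on $n\geq 3$ vertices (an assumption the theorem does not make, so the paper's proof tacitly relies on it), and it actually establishes the bound under the weaker hypothesis $m\leq \frac{n^2}{4}$. What it gives up is the finer eigenvalue-level inequality $\mu_1+(k-1)\mu_2-\frac{2km}{n}$, which is slightly stronger than $(4.3)$ itself. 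Your closing equality analysis ($\mu_1=\cdots=\mu_k=n$, so $\overline{G}$ has at least $k+1$ components) is not required by the statement, but it is sound and consistent with the paper's remark that complete split graphs $K_k\vee (n-k)K_1$ attain equality.
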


\begin{proof} Since $d_2\geq \frac{4m}{n}$, by Lemma \ref{le2,4}, we have $|\mu_2-\frac{2m}{n}|\geq |\mu_i-\frac{2m}{n}|$ for $i=3, 4, \ldots, n$.
By Lemma \ref{le2,1}, we have
\begin{eqnarray*}
||LI(G)||_{F_k} & = & \sum\limits_{i=1}^{k}\sigma_i(LI(G))\\
& \leq & \mu_1(G)-\frac{2m}{n}+(k-1)\left(\mu_2(G)-\frac{2m}{n}\right)\\
& = & \mu_1(G)+(k-1)\mu_2(G)-\frac{2km}{n}\\
& \leq & kn-\frac{2km}{n}.
\end{eqnarray*}
This completes the proof. $\Box$
\end{proof}

\begin{remark}
If $G$ is a complete split graph $K_k\vee (n-k)K_1$ with $k\leq \lfloor\frac{2n-1-\sqrt{2n^2-2n+1}}{2}\rfloor$, then the equality in $(4.3)$ holds.
\end{remark}

Let $x=(x_1, x_2, \ldots, x_n)$ and $y=(y_1, y_2, \ldots, y_n)$ be two
non-increasing sequences of real numbers. If
$\sum_{i=1}^{j}x_i\leq \sum_{i=1}^{j}y_i$ for $j=1, 2, \ldots, n$,
then we say that $x$ is weakly majorized by $y$ and denote $x\prec_{w} y$. If in addition
to $x\prec_{w} y$, $\sum_{i=1}^{n}x_i=\sum_{i=1}^{n}y_i$ holds, then we say that $x$ is majorized by $y$
and denote $x\prec y$. From \cite{MO}, if $f(t)$ is a convex function, then $x\prec y$ implies $(f(x_1), f(x_2), \ldots, f(x_n)) \prec_{w} (f(y_1), f(y_2), \ldots, f(y_n))$.

\begin{theorem}\label{th4,7} 
Let $G$ be a graph with $n$ vertices and $m$ edges. If $ m\geq\frac{n^2}{4}$ and there is $\alpha$ such that $\mu_1(G)\geq \alpha\geq \frac{2m}{n-1}$, then
$$||LI(G)||_{F_k} \geq \frac{2m}{n}+\left\lvert \alpha-\frac{2m}{n}\right\lvert+(k-2)\left\lvert\frac{2m-\alpha}{n-2}-\frac{2m}{n}\right\lvert$$
for $k\geq 2$.
\end{theorem}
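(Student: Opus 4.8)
The plan is to compare the Laplacian spectrum of $G$ with an explicit ``flattest'' spectrum having the same trace, and then transport this comparison through the convex function $t\mapsto\left|t-\frac{2m}{n}\right|$ by means of the majorization inequality from \cite{MO} quoted above.

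Write $q=\frac{2m}{n}$. Since $LI(G)=L(G)-qI_n$ is real symmetric, its singular values are the numbers $\left|\mu_i(G)-q\right|$, so $(\sigma_1(LI(G)),\dots,\sigma_n(LI(G)))$ is the non-increasing rearrangement of $(\left|\mu_1(G)-q\right|,\dots,\left|\mu_n(G)-q\right|)$. Put $\mu=(\mu_1(G),\dots,\mu_n(G))$ and introduce the auxiliary vector $\nu=(\nu_1,\dots,\nu_n)$ with $\nu_1=\alpha$, $\nu_i=\frac{2m-\alpha}{n-2}$ for $2\le i\le n-1$, and $\nu_n=0$. First I would record that $\sum_i\nu_i=2m=\sum_i\mu_i$, and that the hypotheses $\frac{2m}{n-1}\le\alpha\le\mu_1(G)$ together with $\mu_1(G)\le n\le\frac{4m}{n}$ (Lemma \ref{le2,1} and $m\ge\frac{n^2}{4}$) give $0\le\frac{2m-\alpha}{n-2}\le\frac{2m}{n-1}\le\alpha$, so that $\nu$ is already written in non-increasing order (and $\left|\alpha-q\right|=\alpha-q$ since $\alpha\ge\frac{2m}{n-1}>q$).

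The core step is to show $\nu\prec\mu$. The partial-sum inequalities at $j=1$ and $j=n$ are immediate ($\mu_1(G)\ge\alpha=\nu_1$ and $\sum_i\mu_i=\sum_i\nu_i$), and the case $j=n-1$ follows from $\mu_n(G)=0=\nu_n$. For $2\le j\le n-2$ I would use that $\mu_2(G)+\cdots+\mu_j(G)$, being the sum of the $j-1$ largest among the $n-2$ numbers $\mu_2(G),\dots,\mu_{n-1}(G)$, is at least $\frac{j-1}{n-2}\bigl(\mu_2(G)+\cdots+\mu_{n-1}(G)\bigr)=\frac{j-1}{n-2}\bigl(2m-\mu_1(G)\bigr)$; hence $\sum_{i=1}^{j}\mu_i(G)\ge\mu_1(G)\cdot\frac{n-1-j}{n-2}+\frac{(j-1)2m}{n-2}$, and since $\frac{n-1-j}{n-2}\ge0$ the right-hand side is non-decreasing in $\mu_1(G)$, so it is at least its value at $\mu_1(G)=\alpha$, which is precisely $\sum_{i=1}^{j}\nu_i$. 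This verification is the step I expect to be the main obstacle: the trick is to bound $\sum_{i=1}^{j}\mu_i(G)$ as an increasing function of $\mu_1(G)$ \emph{before} invoking $\mu_1(G)\ge\alpha$, rather than trying to use $\mu_1(G)\ge\alpha$ directly (which pushes the estimate in the wrong direction).

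Once $\nu\prec\mu$ is in hand, I would apply the quoted consequence of \cite{MO} to the convex function $f(t)=\left|t-q\right|$: it yields $(f(\nu_1),\dots,f(\nu_n))\prec_{w}(f(\mu_1),\dots,f(\mu_n))$, meaning that for every $k$ the sum of the $k$ largest of $f(\mu_1),\dots,f(\mu_n)$ — which is exactly $||LI(G)||_{F_k}$ — is at least the sum of the $k$ largest of $f(\nu_1),\dots,f(\nu_n)$. It then remains only to bound this latter quantity from below: the numbers $f(\nu_i)$ are $q$, $\left|\alpha-q\right|=\alpha-q$, and $n-2$ copies of $c:=\left|\frac{2m-\alpha}{n-2}-q\right|$, and since the sum of the $k$ largest members of a multiset is at least the sum of \emph{any} $k$ of its members, selecting $q$, $\alpha-q$ and $k-2$ copies of $c$ (possible because $2\le k\le n$) gives $q+(\alpha-q)+(k-2)c$, which is exactly the asserted lower bound for $||LI(G)||_{F_k}$. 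This closing computation is routine and needs no case analysis on the relative sizes of $q$, $\alpha-q$ and $c$.
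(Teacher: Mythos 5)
Your proof is correct, and its skeleton is the same as the paper's: you build the identical comparison vector $\bigl(\alpha,\tfrac{2m-\alpha}{n-2},\dots,\tfrac{2m-\alpha}{n-2},0\bigr)$ and invoke the same majorization-plus-convexity principle from \cite{MO}. Where you genuinely diverge is in the execution, and in two respects your version is the more complete one. First, you actually verify the majorization $\nu\prec\mu$ (via the observation that the top $j-1$ of $\mu_2(G),\dots,\mu_{n-1}(G)$ dominate their average, and then monotonicity in $\mu_1(G)$ before using $\mu_1(G)\ge\alpha$), whereas the paper simply asserts $x\prec y$ without proof. Second, the paper peels off $\sigma_1(LI(G))=\tfrac{2m}{n}$ via Theorem \ref{th3,1}(i) and then compares the index-specific sum $\lvert\mu_1(G)-\tfrac{2m}{n}\rvert+\cdots+\lvert\mu_{k-1}(G)-\tfrac{2m}{n}\rvert$ with the target; since these need not be the $k-1$ largest among the values $\lvert\mu_i(G)-\tfrac{2m}{n}\rvert$, that step does not follow verbatim from weak majorization and requires a little extra care. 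Your route --- apply $\prec_w$ to the full multisets, note that the sum of the $k$ largest $f(\mu_i)$ is by definition $\lVert LI(G)\rVert_{F_k}$, and then lower-bound the sum of the $k$ largest $f(\nu_i)$ by the specific selection $q$, $\alpha-q$, and $k-2$ copies of $c$ --- sidesteps that issue cleanly and, as a bonus, never needs Theorem \ref{th3,1}(i) at all (you use $m\ge\tfrac{n^2}{4}$ only to ensure $\alpha\le 2m$, hence the nonnegativity and ordering of $\nu$). So: same key lemma and same decomposition as the paper, but a tighter and more self-contained finish.
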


\begin{proof} Let $x=(\alpha, \frac{2m-\alpha}{n-2}, \ldots, \frac{2m-\alpha}{n-2},0)$, $y=(\mu_1(G), \mu_2(G), \ldots, \mu_{n-1}(G),0)\in \mathbb{R}^{n}$. Then $x\prec y$. By Theorem \ref{th3,1}, we have $\sigma_1(LI(G))= \frac{2m}{n}$ for $ m\geq\frac{n^2}{4}$. Since $f(t)=|t-\frac{2m}{n}|$ is the convex function, we have
\begin{eqnarray*}
||LI(G)||_{F_k} & = & \sigma_1(LI(G))+\sigma_2(LI(G))+\cdots+\sigma_k(LI(G))\\
& \geq & \frac{2m}{n}+\left\lvert \mu_1(G)-\frac{2m}{n}\right\lvert+\cdots+\left\lvert \mu_{k-1}(G)-\frac{2m}{n}\right\lvert\\
& \geq & \frac{2m}{n}+\left\lvert \alpha-\frac{2m}{n}\right\lvert+(k-2)\left\lvert\frac{2m-\alpha}{n-2}-\frac{2m}{n}\right\lvert
\end{eqnarray*}
for $k\geq 2$. This completes the proof. $\Box$
\end{proof}

\begin{corollary}\label{cor4,1} 
Let $G$ be a graph with $n$ vertices and $m$ edges. If $ m\geq\frac{n^2}{4}$, then
$$||LI(G)||_{F_k}\geq  \Delta+1+(k-2)\left\lvert\frac{2m-\Delta-1}{n-2}-\frac{2m}{n}\right\lvert,$$
for $k\geq 2$.
\end{corollary}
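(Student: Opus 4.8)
The plan is to derive this statement as an immediate specialization of Theorem~\ref{th4,7}, taking the free parameter $\alpha$ to be $\Delta+1$. All that must be done is to check that this choice satisfies the two hypotheses $\mu_1(G)\ge\alpha$ and $\alpha\ge\frac{2m}{n-1}$ that Theorem~\ref{th4,7} requires, and then to simplify the resulting expression.

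First I would observe that since $m\ge\frac{n^2}{4}\ge 1$ the graph $G$ has at least one edge, so Lemma~\ref{le2,1} gives $\mu_1(G)\ge\Delta+1$; this is the first hypothesis with $\alpha=\Delta+1$. For the second hypothesis I would use the elementary fact that the maximum degree is at least the average degree, $\Delta\ge\frac{2m}{n}$, together with the bound $m\le\binom{n}{2}$, i.e.\ $\frac{2m}{n(n-1)}\le 1$. Combining these,
\[
\Delta+1\ge\frac{2m}{n}+1\ge\frac{2m}{n}+\frac{2m}{n(n-1)}=\frac{2m}{n-1},
\]
so $\alpha=\Delta+1$ is an admissible choice in Theorem~\ref{th4,7}.

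Applying Theorem~\ref{th4,7} with $\alpha=\Delta+1$ then yields
\[
||LI(G)||_{F_k}\ge\frac{2m}{n}+\left\lvert\Delta+1-\frac{2m}{n}\right\rvert+(k-2)\left\lvert\frac{2m-\Delta-1}{n-2}-\frac{2m}{n}\right\rvert .
\]
Since $\Delta\ge\frac{2m}{n}$, the quantity $\Delta+1-\frac{2m}{n}\ge 1>0$, so the first modulus opens with a plus sign and $\frac{2m}{n}+\bigl\lvert\Delta+1-\frac{2m}{n}\bigr\rvert=\Delta+1$; this is precisely the claimed bound.

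I do not expect a genuine obstacle: this is a routine corollary. The only step demanding a little care is verifying $\Delta+1\ge\frac{2m}{n-1}$, which rests on the two observations $\Delta\ge\frac{2m}{n}$ and $m\le\binom{n}{2}$; everything else (the majorization argument, the convexity of $t\mapsto|t-\frac{2m}{n}|$, the identity $\sigma_1(LI(G))=\frac{2m}{n}$ for $m\ge\frac{n^2}{4}$) is already packaged inside Theorem~\ref{th4,7}.
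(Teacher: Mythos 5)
Your proposal is correct and follows exactly the paper's route: the published proof simply cites Lemma~\ref{le2,1} and Theorem~\ref{th4,7}, i.e.\ takes $\alpha=\Delta+1$. Your extra step checking the admissibility condition $\Delta+1\geq\frac{2m}{n-1}$ (via $\Delta\geq\frac{2m}{n}$ and $m\leq\binom{n}{2}$) and the sign of $\Delta+1-\frac{2m}{n}$ just makes explicit what the paper leaves implicit.
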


\begin{proof}
By Lemma \ref{le2,1} and Theorem \ref{th4,7}, we have the proof. $\Box$
\end{proof}

\section{\large On the $||LI||_{F_k}$ of $c$-cyclic graphs}

\begin{theorem}\label{th5,1} 
Let $T_n$ be a tree with $n$ vertices. Then
\begin{eqnarray*}
\sigma_1(LI(P_n))& < & \sigma_1(LI(T_n))<\sigma_1(LI(S_{3,\, n-5}))< \sigma_1(LI(T_n^4))<\sigma_1(LI(T_n^3))\\
& < & \sigma_1(LI(S_{2,\, n-4}))<\sigma_1(LI(S_{1,\, n-3}))<\sigma_1(LI(K_{1,\,n-1}))
\end{eqnarray*}
for $T_n\in \mathcal{T}_n\setminus \{K_{1,\,n-1}, S_{1,\, n-3}, S_{2,\, n-4}, T_n^3, T_n^4, S_{3,\, n-5}, P_n \}$.
\end{theorem}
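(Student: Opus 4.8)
The plan is to reduce the ordering of $\sigma_1(LI(\cdot))$ over trees to the known ordering of Laplacian spectral radii in Lemma~\ref{le2,6}. For any tree $T_n$ we have $m=n-1$, so $\frac{2m}{n}=\frac{2(n-1)}{n}<2$. Since $\sigma_1(LI(T_n))=\max\{\mu_1(T_n)-\frac{2m}{n},\,\frac{2m}{n}\}$ by Theorem~\ref{th3,1}, the first task is to show that for \emph{every} tree on $n$ vertices (with $n$ not too small), the first term dominates, i.e. $\mu_1(T_n)\ge \frac{4m}{n}=\frac{4(n-1)}{n}$. This is immediate from Theorem~\ref{th3,1}(iv): trees are bipartite, so $\sigma_1(LI(T_n))=\mu_1(T_n)-\frac{2m}{n}$ unconditionally. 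Hence $\sigma_1(LI(T_n))=\mu_1(T_n)-\frac{2(n-1)}{n}$ for all trees $T_n$, and the constant $\frac{2(n-1)}{n}$ is the same for all of them.

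Once this reduction is in place, the chain of strict inequalities is just the chain in Lemma~\ref{le2,6} with the same constant subtracted from each term: $\sigma_1(LI(T_n))<\sigma_1(LI(S_{3,n-5}))$ is equivalent to $\mu_1(T_n)<\mu_1(S_{3,n-5})$, and so on up to $\mu_1(S_{1,n-3})<\mu_1(K_{1,n-1})$. The only genuinely new endpoint is the left-hand inequality $\sigma_1(LI(P_n))<\sigma_1(LI(T_n))$ for $T_n\neq P_n$; this is equivalent to $\mu_1(P_n)<\mu_1(T_n)$, i.e. that the path uniquely minimizes the Laplacian spectral radius among trees on $n$ vertices. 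This is a classical fact (it follows, for instance, from Lemma~\ref{le2,3} applied to $P_n$ as a subgraph together with the observation that any non-path tree contains a vertex of degree $\ge 3$ and hence a proper subgraph with strictly larger $\mu_1$, or it can simply be cited). So I would state the path-minimality of $\mu_1$ among trees as the input for the left end, cite Lemma~\ref{le2,6} for the rest, subtract $\frac{2(n-1)}{n}$ throughout, and invoke Theorem~\ref{th3,1}(iv) to pass from $\mu_1$ to $\sigma_1(LI)$.

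**Main obstacle.**
The argument is essentially bookkeeping, so the only subtlety is ensuring the hypotheses of Lemma~\ref{le2,6} are met and that $n$ is large enough for the named trees $S_{3,n-5},T_n^4,T_n^3,S_{2,n-4},S_{1,n-3}$ to exist and be distinct (e.g. $n\ge 6$ or so); for small $n$ the chain degenerates and must either be excluded or checked by hand. I would therefore add the implicit assumption that $n$ is large enough for all six special trees in Lemma~\ref{le2,6} to be well-defined, note that Theorem~\ref{th3,1}(iv) removes any case analysis about whether $\mu_1-\frac{2m}{n}$ or $\frac{2m}{n}$ is larger, and then the proof is a two-line deduction. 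No hard estimate is needed; the work was already done in Lemma~\ref{le2,6} and Theorem~\ref{th3,1}.
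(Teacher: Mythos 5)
Your proposal is correct and follows essentially the same route as the paper: the paper also invokes Theorem \ref{th3,1} (trees are bipartite) to write $\sigma_1(LI(T_n))=\mu_1(T_n)-2+\frac{2}{n}$, cites the classical fact that $P_n$ minimizes $\mu_1$ among trees, and then reads the whole chain off Lemma \ref{le2,6} after subtracting the common constant. One small caution: your parenthetical alternative derivation of path-minimality via Lemma \ref{le2,3} with ``$P_n$ as a subgraph'' does not work as stated, since a non-path tree on $n$ vertices does not contain $P_n$ as a subgraph, so keep the citation route, exactly as the paper does.
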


\begin{proof} By Theorems \ref{th3,1}, we have $||LI(T_n)||_{F_1}=||QI(T_n)||_{F_1}=\mu_1(T_n)-2+\frac{2}{n}$.
It is well known that $\mu_1(P_n)\leq \mu_1(T_n)$ for all trees with $n$ vertices (see, e.g. \cite{PG}). By Lemma \ref{le2,6}, we have
\begin{eqnarray*}
\sigma_1(LI(P_n))& < & \sigma_1(LI(T_n))<\sigma_1(LI(S_{3,\, n-5}))< \sigma_1(LI(T_n^4))<\sigma_1(LI(T_n^3))\\
& < & \sigma_1(LI(S_{2,\, n-4}))<\sigma_1(LI(S_{1,\, n-3}))<\sigma_1(LI(K_{1,\,n-1}))
\end{eqnarray*}
for $T_n\in \mathcal{T}_n\setminus \{K_{1,\,n-1}, S_{1,\, n-3}, S_{2,\, n-4}, T_n^3, T_n^4, S_{3,\, n-5}, P_n \}$.
This completes the proof. $\Box$
  \end{proof}

\begin{theorem}\label{th5,2} 
Let $T_n$ be a tree with $n\geq 12$ vertices. Then
$$||LI(P_n)||_{F_2}< ||LI(T_n)||_{F_2}<||LI(S_{\lceil\frac{n-2}{2}\rceil, \, \lfloor\frac{n-2}{2}\rfloor})||_{F_2}<||LI(S_{1,\,n-3})||_{F_2}<||LI(K_{1,\,n-1})||_{F_2}$$
for $T_n\in \mathcal{T}_n\setminus \{K_{1,\,n-1}, S_{1,\, n-3}, S_{\lceil\frac{n-2}{2}\rceil, \, \lfloor\frac{n-2}{2}\rfloor}, P_n \}$.
\end{theorem}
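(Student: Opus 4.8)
The plan is to reduce the four inequalities to known facts about $S_2(L(\cdot))$ and $\mathrm{spr}(L(\cdot))$ by using the structure of $\|LI(T_n)\|_{F_2}$ for trees. First I would observe that for any tree $T_n$ we have $2m/n = 2(n-1)/n = 2 - 2/n < 2$. Now the Ky Fan $2$-norm is $\sigma_1(LI(T_n)) + \sigma_2(LI(T_n))$, and since trees are bipartite, Theorem \ref{th3,1}(iv) gives $\sigma_1(LI(T_n)) = \mu_1(T_n) - 2m/n$. The key structural point is to identify $\sigma_2(LI(T_n))$: since $2m/n < 2 \le \mu_2(T_n)$ would require $d_2 \ge 2$ (true for any tree that is not a star, by Lemma \ref{le2,4}), one expects $\sigma_2(LI(T_n)) = \mu_2(T_n) - 2m/n$ for non-stars, so that
$$\|LI(T_n)\|_{F_2} = \mu_1(T_n) + \mu_2(T_n) - \tfrac{4m}{n} = S_2(L(T_n)) - 2 + \tfrac{2}{n}.$$
For the star $K_{1,n-1}$ itself one checks directly ($\mu_1 = n$, all other nonzero eigenvalues equal $1$, so $\sigma_2 = 2m/n - 1 = 1 - 2/n$) that $\|LI(K_{1,n-1})\|_{F_2} = n - 1 + (1 - 2/n) = S_2(L(K_{1,n-1})) - 2 + 2/n$ as well, since $S_2(L(K_{1,n-1})) = n+1$; so the clean formula $\|LI(T_n)\|_{F_2} = S_2(L(T_n)) - 2 + 2/n$ in fact holds for all trees on $n \ge 12$ vertices. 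One must be slightly careful here with the double star $S_{1,n-3}$ and similar small-degree cases, verifying $d_2 \ge 2$ and hence $\mu_2 \ge 2 > 2m/n$; for $n \ge 12$ every tree other than the star has $d_2 \ge 2$, and the star is handled separately, so the identity is uniform.

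Granting the identity $\|LI(T_n)\|_{F_2} = S_2(L(T_n)) - 2 + 2/n$, all four inequalities become inequalities for $S_2(L(T_n))$ among trees, which I would assemble from the quoted lemmas. The rightmost two, $\|LI(S_{\lceil (n-2)/2\rceil,\,\lfloor(n-2)/2\rfloor})\|_{F_2} < \|LI(S_{1,n-3})\|_{F_2} < \|LI(K_{1,n-1})\|_{F_2}$, need $S_2(L)$ to be strictly larger for $S_{1,n-3}$ than for the balanced double star and strictly larger still for the star; note this is \emph{opposite} to the ordering in Lemma \ref{le2,7}, so I would instead compute $S_2(L)$ directly for these three explicit trees (each has an easily described Laplacian spectrum or characteristic polynomial) and compare. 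The leftmost inequality $\|LI(P_n)\|_{F_2} \le \|LI(T_n)\|_{F_2}$, i.e. $S_2(L(P_n)) \le S_2(L(T_n))$ with equality iff $T_n \cong P_n$, is exactly Lemma \ref{le2,8} (which gives $\mu_1(P_n) + \mu_2(P_n) = 4 + 2(\cos\frac{\pi}{n} + \cos\frac{2\pi}{n})$ as the minimum of $\mu_1 + \mu_2$ over connected graphs, with equality iff $P_n$). The middle inequality $\|LI(T_n)\|_{F_2} < \|LI(S_{\lceil(n-2)/2\rceil,\,\lfloor(n-2)/2\rfloor})\|_{F_2}$ for $T_n$ outside the listed exceptions is precisely Lemma \ref{le2,7}, the maximality of $S_2(L)$ at the balanced double star — so the only trees not covered by "$<$" there are $S_{1,n-3}$ and the two stars handled by the right-hand inequalities.

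The main obstacle I anticipate is the reconciliation between the $S_2(L)$-ordering and the stated $\sigma$-ordering at the top end: Lemma \ref{le2,7} says the balanced double star \emph{maximizes} $S_2(L)$, yet the theorem places $S_{1,n-3}$ and $K_{1,n-1}$ strictly \emph{above} it in $\|\cdot\|_{F_2}$. This forces $\sigma_2$ for those graphs to \emph{not} equal $\mu_2 - 2m/n$ — indeed for $K_{1,n-1}$, $\mu_2 = 1 < 2m/n$, so $\sigma_2 = 2m/n - \mu_2$, and the "clean identity" above only survives because $\mu_2(K_{1,n-1}) = 1$ makes $|\mu_2 - 2m/n| = \mu_2 - 1$... no: one must recompute. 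The real resolution is that $\|LI(K_{1,n-1})\|_{F_2} = (\mu_1 - 2m/n) + (2m/n - \mu_{n-1})$ where $\mu_{n-1} = \mu_2 = 1$, giving $n - 2m/n + 2m/n - 1 = n-1$, whereas $S_2(L(K_{1,n-1})) - 2 + 2/n = (n+1) - 2 + 2/n = n - 1 + 2/n > n-1$. So the clean identity actually \emph{fails} for the star, and likewise must be checked for $S_{1,n-3}$ (where $\mu_2$ may also dip below $2m/n$). Thus the careful bookkeeping of exactly when $\sigma_2 = \mu_2 - 2m/n$ versus $\sigma_2 = 2m/n - \mu_2$ versus $\sigma_2 = 2m/n - \mu_{n-1}$, and the separate direct computation of $\|LI\|_{F_2}$ for the three near-star exceptional trees, is the delicate part; once those values are in hand, all four comparisons follow by elementary estimates and the cited lemmas for $\mu_1 + \mu_2$ on the low end.
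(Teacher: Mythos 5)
Your central structural claim fails, and it is not just the stars that break it: you have overlooked that $\mu_n(T_n)=0$ is always a Laplacian eigenvalue, so $LI(T_n)$ always has the singular value $\left\lvert 0-\frac{2m}{n}\right\rvert=2-\frac{2}{n}$. Since trees are bipartite, $\sigma_1(LI(T_n))=\mu_1(T_n)-2+\frac{2}{n}$, but then
$\sigma_2(LI(T_n))=\max\left\{\mu_2(T_n)-2+\frac{2}{n},\,2-\frac{2}{n}\right\}$, so the correct identity is
$\|LI(T_n)\|_{F_2}=\max\left\{S_2(L(T_n))-4+\frac{4}{n},\,\mu_1(T_n)\right\}$
(note also your constant is off: $2\cdot\frac{2m}{n}=4-\frac{4}{n}$, not $2-\frac{2}{n}$). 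The condition $d_2\geq 2$, hence $\mu_2>\frac{2m}{n}$, does \emph{not} give $\sigma_2=\mu_2-\frac{2m}{n}$; that needs $\mu_2\geq 4-\frac{4}{n}$, which fails for many non-star trees (e.g.\ $S_{2,n-4}$, $T_n^3$, $T_n^4$, $S_{3,n-5}$), for which $\|LI\|_{F_2}=\mu_1$. Your explicit values are wrong for the same reason: $\sigma_2(LI(K_{1,n-1}))=2-\frac{2}{n}$ (from the eigenvalue $0$, not from $\mu_{n-1}=1$), so $\|LI(K_{1,n-1})\|_{F_2}=\mu_1(K_{1,n-1})=n$, not $n-1$, and likewise $\|LI(S_{1,n-3})\|_{F_2}=\mu_1(S_{1,n-3})$. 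You sensed the tension with Lemma \ref{le2,7} at the end, but your "resolution" via $\frac{2m}{n}-\mu_{n-1}$ still misses the zero eigenvalue and so does not repair the argument.

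Because of this, the bulk of the actual proof is absent from your plan. The paper splits into the two cases above: when $\sigma_2=\mu_2-2+\frac{2}{n}$, Lemma \ref{le2,7} indeed bounds $\|LI(T_n)\|_{F_2}$ by $S_2(L(S_{\lceil\frac{n-2}{2}\rceil,\lfloor\frac{n-2}{2}\rfloor}))-4+\frac{4}{n}$; but when $\sigma_2=2-\frac{2}{n}$ one has $\|LI(T_n)\|_{F_2}=\mu_1(T_n)$, one must show $d_2(T_n)\leq 3$ (via Lemma \ref{le2,4}), invoke the $\mu_1$-ordering of Lemma \ref{le2,6} with the root estimates $n-1<\mu_1(S_{1,n-3})<n$ and $n-2<\mu_1(S_{2,n-4})<n-2+\frac{1}{n}$, and compute $\|LI(S_{\lceil\frac{n-2}{2}\rceil,\lfloor\frac{n-2}{2}\rfloor})\|_{F_2}$ explicitly (via its Laplacian characteristic polynomial and $\mu_1+\mu_2=\mu_1(\overline{S_{\lceil\frac{n-2}{2}\rceil,\lfloor\frac{n-2}{2}\rfloor}})+2$) to sandwich it strictly between $\mu_1(S_{2,n-4})$ and $\mu_1(S_{1,n-3})$; this is exactly how the balanced double star can lose to $S_{1,n-3}$ and $K_{1,n-1}$ despite maximizing $S_2$. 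Finally, for the left inequality the max-formula forces you to use \emph{both} Lemma \ref{le2,8} (minimality of $S_2$ at $P_n$) and $\mu_1(P_n)\leq\mu_1(T_n)$, not the $S_2$ comparison alone. Without these ingredients the middle and right inequalities, and the case analysis they require, are not established by your proposal.
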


\begin{proof} For the upper bounds, we consider two cases depending on $\sigma_2(LI(T_n))$.

{\bf Case 1.} $\sigma_2(LI(T_n))=\mu_2(T_n)-2+\frac{2}{n}$. Then $||LI(T_n)||_{F_2}=\mu_1(T_n)+\mu_2(T_n)-4+\frac{4}{n}$. By Lemma \ref{le2,7}, we have
$$||LI(T_n)||_{F_2}\leq S_2(L(S_{\lceil\frac{n-2}{2}\rceil, \, \lfloor\frac{n-2}{2}\rfloor}))-4+\frac{4}{n}.$$

{\bf Case 2.} $\sigma_2(LI(T_n))=2-\frac{2}{n}$. Then $||LI(T_n)||_{F_2}= \mu_1(T_n)$. Let $d_2(T_n)$ be the second largest degree of $T_n$. If $d_2(T_n)\geq 4$, by Lemma \ref{le2,4}, then $\left\lvert\mu_2(T_n)-\frac{2m}{n}\right\lvert\geq \left\lvert\mu_i(T_n)-\frac{2m}{n}\right\lvert$ for $i=3,4,\ldots, n$, that is, $\sigma_2(LI(T_n))=\mu_2(T_n)-2+\frac{2}{n}$, a contradiction. Thus $d_2(T_n)\leq 3$. By Lemma \ref{le2,6}, we have $$\mu_1(T_n)<\mu_1(S_{3,\, n-5})<\mu_1(T_n^4)<\mu_1(T_n^3)<\mu_1(S_{2,\, n-4})<\mu_1(S_{1,\, n-3})<\mu_1(K_{1,\,n-1})$$
for $T_n\in \mathcal{T}_n\setminus \{K_{1,\,n-1}, S_{1,\, n-3}, S_{2,\, n-4}, T_n^3, T_n^4, S_{3,\, n-5} \}$ and $T_n^i$ $(i=3, 4)$ shown in Fig. 2.1, where $\mu_1(S_{1,\, n-3})$, $\mu_1(S_{2,\, n-4})$, respectively, are the largest root of the following polynomials:
\begin{eqnarray*}
f_1(x) & = & x^3-(n+2)x^2+(3n-2)x-n,\\
f_2(x) & = & x^3-(n+2)x^2+(4n-7)x-n.
\end{eqnarray*}

By derivative, we know that $f_1'(x)>0$ for $x\in (n-1, +\infty)$. Therefore $f_1(x)$ is strictly increasing on $(n-1, +\infty)$.
Since $f_1(n-1)=-1<0$ and $f_1(n)=n(n-3)>0$ for $n\geq 10$, we have $n-1<\mu_1(S_{1,\,n-3})<n$.

By derivative, we know that $f'_2(x)>0$ for $x\in (n-2, +\infty)$. Therefore $f_2(x)$ is strictly increasing on $(n-2, +\infty)$.
Since $f_2(n-2)=-2<0$ and $f_2(n-2+\frac{1}{n})=\frac{1}{n^3}(n^4-10n^3+15n^2-8n+1)>0$ for $n\geq 10$, we have $n-2<\mu_1(S_{2,\,n-4})<n-2+\frac{1}{n}$.

By direct calculation, the Laplacian characteristic polynomial of $S_{\lceil\frac{n-2}{2}\rceil, \, \lfloor\frac{n-2}{2}\rfloor}$ is
$$\phi(x)=x(x-1)^{n-4}[x^3-(n+2)x^2+(2n+\lceil\frac{n-2}{2}\rceil \lfloor\frac{n-2}{2}\rfloor+1)x-n].$$
It is well known that  $\mu_{n-1}(T_n)\leq \delta(T_n)=1$ (see, e.g. \cite{F}).  By Lemma \ref{le2,3}, we have $\mu_{2}(S_{\lceil\frac{n-2}{2}\rceil, \, \lfloor\frac{n-2}{2}\rfloor})\geq 2$. Thus $\mu_1(S_{\lceil\frac{n-2}{2}\rceil, \, \lfloor\frac{n-2}{2}\rfloor})$, $\mu_{2}(S_{\lceil\frac{n-2}{2}\rceil, \, \lfloor\frac{n-2}{2}\rfloor})$ and $\mu_{n-1}(S_{\lceil\frac{n-2}{2}\rceil, \, \lfloor\frac{n-2}{2}\rfloor})$ are roots of the following polynomial
$$g(x)= x^3-(n+2)x^2+(2n+\lceil\frac{n-2}{2}\rceil \lfloor\frac{n-2}{2}\rfloor+1)x-n.$$
Since $\mu_{n-1}(G)=n-\mu_1(\overline{G})$ (see, e.g. \cite{M}), by the Vieta Theorem, we have
$$\mu_1(S_{\lceil\frac{n-2}{2}\rceil, \, \lfloor\frac{n-2}{2}\rfloor})+\mu_{2}(S_{\lceil\frac{n-2}{2}\rceil, \, \lfloor\frac{n-2}{2}\rfloor})=n+2- \mu_{n-1}(S_{\lceil\frac{n-2}{2}\rceil, \, \lfloor\frac{n-2}{2}\rfloor})=\mu_1(\overline{S_{\lceil\frac{n-2}{2}\rceil, \, \lfloor\frac{n-2}{2}\rfloor}})+2.$$
Thus
$$||LI(S_{\lceil\frac{n-2}{2}\rceil, \, \lfloor\frac{n-2}{2}\rfloor})||_{F_2}=\mu_1(\overline{S_{\lceil\frac{n-2}{2}\rceil, \, \lfloor\frac{n-2}{2}\rfloor}})-2+\frac{4}{n}<n-2+\frac{4}{n}<\mu_1(S_{1,\,n-3}).$$

If $n$ is an odd number, by direct calculation, we have that the Laplacian characteristic polynomial of $\overline{S_{\frac{n-1}{2}, \, \frac{n-3}{2}}}$ is
$$\varphi(x)=\frac{1}{4}x(x-n+1)^{n-4}[4x^3-8(n-1)x^2+(5n^2-12n+7)x-n^3+4n^2-3n].$$
It follows that $\mu_1(\overline{S_{\frac{n-1}{2}, \, \frac{n-3}{2}}})$ is the largest root of the following polynomial
$$h_1(x)=4x^3-8(n-1)x^2+(5n^2-12n+7)x-n^3+4n^2-3n.$$
Thus $||LI(S_{\frac{n-1}{2}, \, \frac{n-3}{2}})||_{F_2}$ is the largest root of the polynomial $h_1(x+2-\frac{4}{n})$. Noting that $f_2(x)$ and $h_1(x+2-\frac{4}{n})$ are strictly increasing on $(n-2, +\infty)$. Since
\begin{eqnarray*}
h_1(x+2-\frac{4}{n})-f_2(x) & = & 3x^3-(7n+\frac{48}{n}-34)x^2+(5n^2-48n-\frac{256}{n}\\
& & +\frac{192}{n^2}+158)x-n^3+14n^2-78n-\frac{476}{n}\\
& & +\frac{512}{n^2}-\frac{256}{n^3}+254\\
& < & 0
\end{eqnarray*}
for $x\in (n-2, n-2+1/n)$, we have $||LI(S_{\frac{n-1}{2}, \, \frac{n-3}{2}})||_{F_2}>\mu_1(S_{2,\,n-4})$.

If $n$ is an even number, by a similar reasoning as the above, we can conclude that
$||LI(S_{\frac{n-2}{2}, \, \frac{n-2}{2}})||_{F_2}>\mu_1(S_{2,\,n-4})$. Therefore, $||LI(S_{\lceil\frac{n-2}{2}\rceil, \, \lfloor\frac{n-2}{2}\rfloor})||_{F_2}>\mu_1(S_{2,\,n-4})$.

It is easy to see that $||LI(K_{1,\,n-1})||_{F_2}=\mu_1(K_{1,\,n-1})$, $||LI(S_{1,\,n-3})||_{F_2}=\mu_1(S_{1,\,n-3})$ and $||LI(S_{2,\,n-4})||_{F_2}=\mu_1(S_{2,\,n-4})$. Combining the above arguments, we have
$$||LI(T_n)||_{F_2}<||LI(S_{\lceil\frac{n-2}{2}\rceil, \, \lfloor\frac{n-2}{2}\rfloor})||_{F_2}<||LI(S_{1,\,n-3})||_{F_2}<||LI(K_{1,\,n-1})||_{F_2}$$
for $T_n\in \mathcal{T}_n\setminus \{K_{1,\,n-1}, S_{1,\, n-3}, S_{\lceil\frac{n-2}{2}\rceil, \, \lfloor\frac{n-2}{2}\rfloor}\}$.

Now we will show that $||LI(P_n)||_{F_2}< ||LI(T_n)||_{F_2}$ for $T_n\in \mathcal{T}_n\setminus \{P_n\}$. By Lemma \ref{le2,8}, we have $S_2(L(P_n))\leq S_2(L(T_n))$. Since $\mu_1(P_n)\leq \mu_1(T_n)$, we have
\begin{eqnarray*}
||LI(T_n)||_{F_2} & = & \sigma_1(LI(T_n))+\sigma_2(LI(T_n))\\
& = & \max\{\mu_1(T_n)+\mu_2(T_n)-4+\frac{4}{n}, \mu_1(T_n)\}\\
& = & \max\{S_2(L(T_n))-4+\frac{4}{n}, \mu_1(T_n)\}\\
& \geq & ||LI(P_n)||_{F_2}
\end{eqnarray*}
with equality if and only if $T_n\cong P_n$. This completes the proof. $\Box$
\end{proof}

\begin{theorem}\label{th5,3} 
Let $T_n$ be a tree with $n\geq 5$ vertices. Then
$$||LI(T_n)||_{F_3}\leq n+1-\frac{2}{n}$$
with equality if and only if $T_n\cong K_{1,\,n-1}$.
\end{theorem}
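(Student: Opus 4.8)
The plan is to write $c=\frac{2m}{n}=2-\frac{2}{n}$ (recall $m=n-1$ for a tree) and analyse the three largest numbers among $\{\,|\mu_i(T_n)-c|:1\le i\le n\,\}$. I would first record two facts valid for every tree on $n\ge 5$ vertices. (a) $\sigma_1(LI(T_n))=\mu_1(T_n)-c$, because $\mu_1(T_n)\ge 2c$: if $\Delta\ge 3$ then Lemma \ref{le2,1} gives $\mu_1\ge\Delta+1\ge4>2c$, while if $\Delta=2$ then $T_n=P_n$ and $\mu_1(P_n)=2+2\cos\frac{\pi}{n}\ge 2c$ for $n\ge3$. (b) It is well known that $\mu_{n-1}(T_n)\le\delta(T_n)=1<c$, so the two ``negative-side'' deviations of largest modulus are $c$ (from $\mu_n=0$) and $c-\mu_{n-1}$, both at most $c$.

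Next, by sorting the list of deviations I would split into the cases $\mu_2(T_n)<2c$ and $\mu_2(T_n)\ge 2c$. In the first case one checks directly that $\sigma_2=c$ and $\sigma_3=\max\{\mu_2-c,\ c-\mu_{n-1}\}$, so that $\|LI(T_n)\|_{F_3}=\max\{S_2(L(T_n))-c,\ spr(L(T_n))+c\}$; in particular $K_{1,n-1}$ falls here, with $\sigma_1=n-c$, $\sigma_2=c$, $\sigma_3=c-1$, giving $\|LI(K_{1,n-1})\|_{F_3}=n+1-\frac{2}{n}$. In the second case $\sigma_2=\mu_2-c$ and $\sigma_3=\max\{\mu_3-c,\ c\}$, so $\|LI(T_n)\|_{F_3}=\max\{S_3(L(T_n))-3c,\ S_2(L(T_n))-c\}$. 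Hence in all cases
$$\|LI(T_n)\|_{F_3}\le\max\Big\{S_3(L(T_n))-3c,\ S_2(L(T_n))-c,\ spr(L(T_n))+c\Big\}.$$

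It then remains to bound each of the three quantities by $n+1-\frac{2}{n}$, strictly away from the star. For the spread term, Lemma \ref{le2,9} gives $spr(L(T_n))\le spr(L(K_{1,n-1}))=n-1$, hence $spr(L(T_n))+c\le n+1-\frac{2}{n}$ with equality iff $T_n\cong K_{1,n-1}$. For the $S_2$ term, Lemma \ref{le2,7} together with the Vieta computation on the cubic factor of the Laplacian characteristic polynomial of the balanced double star carried out in the proof of Theorem \ref{th5,2} (where the third root is $\mu_{n-1}>0$) gives $S_2(L(T_n))<n+2$, so $S_2(L(T_n))-c<n+\frac{2}{n}<n+1-\frac{2}{n}$ for $n\ge5$. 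For the $S_3$ term I would invoke the Brouwer-type bound $S_3(L(T_n))\le m+\binom{4}{2}=n+5$, which is known for trees, so $S_3(L(T_n))-3c\le n-1+\frac{6}{n}<n+1-\frac{2}{n}$ for $n\ge5$. Combining these with the direct computation $\|LI(K_{1,n-1})\|_{F_3}=n+1-\frac{2}{n}$ completes the proof, the equality case being forced by the spread estimate.

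The step I expect to be the crux is the case $\mu_2(T_n)\ge 2c$, where the $S_3$-term genuinely enters: one must be certain that $S_3(L(T_n))-3c$ never exceeds $n+1-\frac{2}{n}$. The crude Grone--Merris--Bai estimate $S_3(L(T_n))\le 2n-2$ is only sufficient for $n\le 8$, so the argument really needs the sharper bound $S_3(L(T_n))\le n+5$ for trees (alternatively, a self-contained argument exploiting that $\mu_2,\mu_3\ge 2c$ pins $\mu_1+\mu_2$ far below its maximum $n+2$ and hence keeps $S_3$ of constant order). Everything else reduces to the case bookkeeping above and to estimates already available from Lemmas \ref{le2,7} and \ref{le2,9} and the proof of Theorem \ref{th5,2}.
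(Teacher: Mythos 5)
Your proposal is correct and takes essentially the same route as the paper: both reduce $||LI(T_n)||_{F_3}$ to $\max\{S_2(L(T_n))-c,\ S_3(L(T_n))-3c,\ spr(L(T_n))+c\}$ with $c=2-\frac{2}{n}$ (your case analysis just makes this step explicit) and then let the spread bound of Lemma \ref{le2,9} force the star in the equality case. The only immaterial difference is the source of the $S_2$ and $S_3$ estimates: the paper cites Theorem 1.1 of \cite{FHRT} ($S_2(L(T_n))<n+2-\frac{2}{n}$ and $S_3(L(T_n))<n+4-\frac{4}{n}$), whereas you use Lemma \ref{le2,7} together with the Vieta computation from the proof of Theorem \ref{th5,2} and the Brouwer-type bound $S_3(L(T_n))\leq n+5$ for trees (available via \cite{HMT}), which are weaker but still suffice for $n\geq 5$.
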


\begin{proof} From Theorem 1.1 in \cite{FHRT}, we have $S_2(L(T_n))< n+2-\frac{2}{n}$ and $S_3(L(T_n))< n+4-\frac{4}{n}$. By Lemma \ref{le2,9}, we have $spr(L(T_n))<spr(L(K_{1,\,n-1}))=n-1$ for $T_n\in \mathcal{T}_n\setminus \{K_{1,\,n-1}\}$. Thus
\begin{eqnarray*}
||LI(T_n)||_{F_3} & = & \sigma_1(LI(T_n))+\sigma_2(LI(T_n))+\sigma_3(LI(T_n))\\
& = &  \max\{S_2(L(T_n))-2+\frac{2}{n}, S_3(L(T_n))-6+\frac{6}{n}, spr(L(T_n))+2-\frac{2}{n}\}\\
& \leq & \max\{n, n-2+\frac{2}{n}, spr(L(T_n))+2-\frac{2}{n}\}\\
& \leq & ||LI(K_{1,\,n-1})||_{F_3}\\
& = & n+1-\frac{2}{n}
\end{eqnarray*}
with equality if and only if $T_n\cong K_{1,\,n-1}$.
This completes the proof. $\Box$
\end{proof}

\begin{conjecture}
Let $T_n$ be a tree with $n\geq 12$ vertices. Then
$$||LI(P_n)||_{F_k}\leq ||LI(T_n)||_{F_k}\leq ||LI(K_{1, \, n-1})||_{F_k}.$$
The equality in the left hand side holds if and only if $T_n\cong P_n$, and the equality in the right hand side holds if and only if $T_n\cong K_{1,\,n-1}$.
\end{conjecture}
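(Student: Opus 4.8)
The plan is to reduce the conjecture to Laplacian-eigenvalue estimates for trees by classifying the $k$ chosen singular values according to whether they come from Laplacian eigenvalues above or below the mean $\bar\mu:=\tfrac{2m}{n}$. For a tree $m=n-1$, so $\bar\mu=2-\tfrac2n$ and $\mu_n(T_n)=0$; using the elementary identity that for reals $a_1\ge\cdots\ge a_n$ the sum of the $k$ largest of $|a_1|,\dots,|a_n|$ equals $\max_{0\le j\le k}\big(\sum_{i=1}^{j}a_i-\sum_{i=n-k+j+1}^{n}a_i\big)$, applied with $a_i=\mu_i(T_n)-\bar\mu$, we get
$$||LI(T_n)||_{F_k}=\max_{0\le j\le k}\Big[\,S_j(L(T_n))-S^{-}_{k-j}(L(T_n))+(k-2j)\bar\mu\,\Big],$$
where $S^{-}_l(L(G)):=\sum_{i=n-l+1}^{n}\mu_i(G)$ is the sum of the $l$ smallest Laplacian eigenvalues and $S_0=S^{-}_0=0$. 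The same identity holds for $P_n$ (spectrum $\{2+2\cos\tfrac{i\pi}{n}:1\le i\le n-1\}\cup\{0\}$) and for $K_{1,n-1}$ (spectrum $\{n,1^{(n-2)},0\}$), and a direct computation gives $||LI(K_{1,n-1})||_{F_k}=n+k-2-\tfrac{2(k-2)}{n}$ for $2\le k\le n$, attained at $j=1$. Hence the upper bound reduces to proving $S_j(L(T_n))-S^{-}_{k-j}(L(T_n))+(k-2j)\bar\mu\le n+k-2-\tfrac{2(k-2)}{n}$ for \emph{every} $j$, and the lower bound to exhibiting, for each tree, \emph{one} $j$ for which the same expression is at least $||LI(P_n)||_{F_k}$; the equality statements are then read off from the bounds used (the cases $k=1,2,3$ being Theorems \ref{th5,1}--\ref{th5,3}).

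For the upper bound I would feed in the sharp tree bound $S_j(L(T_n))\le n+2(j-1)-\tfrac{2(j-1)}{n}$ of \cite{FHRT} together with lower bounds on $S^{-}_l(L(T_n))$. Neither ingredient alone is enough: substituting $S_j$ and $S^{-}_{k-j}\ge0$ at $j=1$ already falls short for $k\ge3$, and the gap can only be closed by a lower bound on $S^{-}_{k-1}(L(T_n))$ that is \emph{coupled} to $\mu_1(T_n)$. The coupling is that $\mu_1(T_n)$ close to $n$ forces $T_n$ to be nearly a star, so its $k-1$ smallest Laplacian eigenvalues lie near $\{0,1,\dots,1\}$ and $S^{-}_{k-1}(L(T_n))$ is near $k-2$, whereas if $T_n$ is far from a star then $\mu_1(T_n)$ is well below $n$ and the $j=1$ value is far below the target. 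Making this dichotomy quantitative---splitting on $\mu_1(T_n)\ge n-c$ and using the equality analysis of Lemma \ref{le2,1}---and then checking each $j$ should give the bound, with equality forced to $K_{1,n-1}$ by the strictness in Lemma \ref{le2,1}.

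For the lower bound $||LI(P_n)||_{F_k}\le||LI(T_n)||_{F_k}$ the aim is to show that it suffices to test the value of $j$ that is optimal for $P_n$ (roughly $k/2$, since the top and bottom deviations of $P_n$ are nearly symmetric about $\bar\mu$, differing only by $\tfrac4n$). For an arbitrary tree one would combine: (i) $\mu_n(T_n)=0$ always yields a deviation $\bar\mu$, which matches or beats the corresponding one for $P_n$; (ii) $\mu_1(T_n)\ge\mu_1(P_n)$, since $P_n$ minimizes the Laplacian spectral radius among trees, so the top deviation dominates; and (iii) a higher-order analogue of Lemma \ref{le2,8}: for the small indices $j$ active in $P_n$'s split, $S_j(L(T_n))\ge S_j(L(P_n))$, while $S^{-}_{k-j}(L(T_n))$ exceeds $S^{-}_{k-j}(L(P_n))$ by little. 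A cleaner but more ambitious route is the pair of weak majorizations $\tau(P_n)\prec_w\tau(T_n)\prec_w\tau(K_{1,n-1})$, where $\tau(G)$ is the decreasing rearrangement of $\big(|\mu_i(G)-\bar\mu|\big)_i$, which would dispatch all $k$ at once; for the right-hand relation, majorization theorems for Laplacian spectra of trees (Grone--Merris--Bai and its dual for the smallest eigenvalues) are the natural tools.

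The step I expect to be the main obstacle is the lower bound. Unlike the $k=2$ case, one cannot merely compare partial sums, because $S_j(L(T_n))\ge S_j(L(P_n))$ \emph{fails} for $j$ near $n-2$ (there $K_{1,n-1}$ has the smaller value), so the mixed term $S^{-}_{k-j}$ must be retained and controlled precisely over exactly the indices active for $P_n$. Pinning that down, and on the upper side the coupled lower bound on $S^{-}_{k-1}(L(T_n))$, appears to require spectral input beyond Lemmas \ref{le2,6}--\ref{le2,9}, which is presumably why the statement is only conjectured; the hypothesis $n\ge12$ plays exactly the role it does in Lemma \ref{le2,8}, ensuring that $P_n$ really is the extremal tree at the relevant orders.
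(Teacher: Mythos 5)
What you have written is a research plan, not a proof, and it is important to note that the paper itself offers no proof to compare against: the statement is posed there as an open conjecture, and the authors only establish the cases $k=1,2,3$ (Theorems~\ref{th5,1}--\ref{th5,3}) by ad hoc arguments built on Lemmas~\ref{le2,6}--\ref{le2,9}. Your framework is sound as far as it goes: the identity $||LI(T_n)||_{F_k}=\max_{0\le j\le k}\bigl[S_j(L(T_n))-S^{-}_{k-j}(L(T_n))+(k-2j)\bar\mu\bigr]$ is correct, and your computed value $||LI(K_{1,n-1})||_{F_k}=n+k-2-\tfrac{2(k-2)}{n}$ agrees with the paper's $k=3$ value. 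But every step that would actually close either inequality is left as an unproved claim, as you yourself concede. For the upper bound, the needed ingredient is a lower bound on $S^{-}_{k-1}(L(T_n))$ coupled to $\mu_1(T_n)$; your "near-star dichotomy" is only described qualitatively, and the equality analysis of Lemma~\ref{le2,1} (which characterizes when $\mu_1=\Delta+1$ or $\mu_1=n$) does not by itself quantify how far the small Laplacian eigenvalues of a non-star tree sit above $\{0,1,\dots,1\}$, nor does it handle the intermediate values of $j$, where the bound $S_j(L(T_n))\le n+2(j-1)-\tfrac{2(j-1)}{n}$ must again be combined with a nontrivial estimate on $S^{-}_{k-j}$. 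The claimed equality characterizations would also not "read off" from such inequalities without a separate rigidity argument.

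For the lower bound the gaps are more serious. Step (iii) asserts $S_j(L(T_n))\ge S_j(L(P_n))$ for the indices $j$ active in the path's optimal split together with tight control of $S^{-}_{k-j}(L(T_n))-S^{-}_{k-j}(L(P_n))$; neither statement is in the literature you cite (Lemma~\ref{le2,8} is exactly the case $j=2$, and even its proof in the paper needs $n\ge 12$), and the second is precisely where a tree with many eigenvalues near $\bar\mu=2-\tfrac2n$ could in principle lose to the path, so it cannot be waved through as "exceeds by little." The alternative route via weak majorization $\tau(P_n)\prec_w\tau(T_n)\prec_w\tau(K_{1,n-1})$ of the deviation sequences is not a tool but a restatement: it is equivalent to the full family of Ky Fan inequalities you are trying to prove, and Grone--Merris--Bai-type majorization compares a Laplacian spectrum with a conjugate degree sequence, not the absolute deviations of two different trees, so it does not transfer. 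In short, the proposal identifies the right reduction and correctly locates the difficulties, but it proves neither inequality for general $k$; the conjecture remains open after your argument, exactly as it is in the paper.
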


\begin{theorem}\label{th5,4} 
Let $U_n$ be a unicyclic graph with $n\geq 5$ vertices. Then
$$\sigma_1(LI(C_n))\leq \sigma_1(LI(U_n))\leq n-2.$$
The equality in the left hand side holds if and only if $U_n\cong C_n$, and the equality in the right hand side holds if and only if $U_n\cong G_{n,\,n}$.
\end{theorem}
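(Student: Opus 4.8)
The plan is to reduce the whole statement to the explicit description of the spectral norm of the $LI$-matrix already used in Section 3. Since a unicyclic graph has $m=n$, we have $\tfrac{2m}{n}=2$; as $LI(U_n)=L(U_n)-2I_n$ is real symmetric with eigenvalues $\mu_i(U_n)-2$ and $\mu_n(U_n)=0$, its largest singular value is $\sigma_1(LI(U_n))=\max\{\mu_1(U_n)-2,\,2\}$ (the maximum of $|\mu_i(U_n)-2|$ is attained at an extreme eigenvalue). Consequently $\sigma_1(LI(U_n))\ge 2$ always, $\sigma_1(LI(U_n))=2$ exactly when $\mu_1(U_n)\le 4$, and $\sigma_1(LI(U_n))=\mu_1(U_n)-2$ whenever $\mu_1(U_n)\ge 4$. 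So everything becomes a statement about $\mu_1(U_n)$.

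For the lower bound I would first apply Theorem \ref{th3,1}(iii): $C_n$ is connected and $2$-regular, so $\sigma_1(LI(C_n))=\tfrac{2m}{n}=2$, and therefore $\sigma_1(LI(C_n))\le\sigma_1(LI(U_n))$ for every unicyclic $U_n$. For the equality case I would show that $C_n$ is the only unicyclic graph on $n\ge 5$ vertices with $\mu_1\le 4$: if $U_n\not\cong C_n$, then since $U_n$ is connected with degree sum $2n$ it has a vertex of degree at least $3$, so $\Delta(U_n)\ge 3$, and Lemma \ref{le2,1} gives $\mu_1(U_n)\ge\Delta(U_n)+1\ge 4$; were equality to hold throughout, the sharp case of Lemma \ref{le2,1} would force $\Delta(U_n)=n-1$, hence $n=4$, a contradiction, so $\mu_1(U_n)>4$ and $\sigma_1(LI(U_n))=\mu_1(U_n)-2>2$. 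Since the Laplacian eigenvalues of $C_n$ are $2-2\cos\tfrac{2\pi j}{n}\le 4$, we have $\mu_1(C_n)\le 4$, so the left inequality is an equality precisely for $U_n\cong C_n$.

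For the upper bound I would use $\mu_1(U_n)\le n$ from Lemma \ref{le2,1}, which gives $\sigma_1(LI(U_n))=\max\{\mu_1(U_n)-2,\,2\}\le\max\{n-2,\,2\}=n-2$ because $n\ge 5$. Equality forces $\mu_1(U_n)=n$ (the value $2$ is excluded as $n-2\ge 3$). Then I would observe that $G_{n,n}$ has a dominating vertex — the endpoint of the triangle's common edge that also carries the $n-3$ pendant edges has degree $n-1$ — so the complement of $G_{n,n}$ is disconnected and Lemma \ref{le2,1} yields $\mu_1(G_{n,n})=n$, whence $\sigma_1(LI(G_{n,n}))=n-2$. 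Conversely, Lemma \ref{le2,10} gives $\mu_1(U_n)<\mu_1(G_{n,n})=n$ for $U_n\not\cong G_{n,n}$, so $\sigma_1(LI(U_n))<n-2$. Thus the right inequality is an equality precisely for $U_n\cong G_{n,n}$.

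The argument is short once the formula for $\sigma_1(LI(U_n))$ is in place; the only point that needs real care is the equality case of the lower bound, where one must rule out non-cycle unicyclic graphs with $\mu_1=4$, and this is handled by the sharp case of Lemma \ref{le2,1} together with $\Delta\ge 3$. I do not expect any genuine obstacle beyond verifying the two standard spectral facts $\mu_1(C_n)\le 4$ and $\mu_1(G_{n,n})=n$.
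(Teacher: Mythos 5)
Your proof is correct, and it diverges from the paper's in the part that needs the most care. Both arguments reduce everything to $\sigma_1(LI(U_n))=\max\{\mu_1(U_n)-2,\,2\}$ and split on whether $\Delta(U_n)\geq 3$, and both use Lemma \ref{le2,10} to pin down $G_{n,\,n}$ as the unique maximizer. The difference is in the strict inequality $\sigma_1(LI(U_n))>2$ for $U_n\not\cong C_n$: the paper invokes the ordering result of Shen--Shao--Guo \cite{SSG}, namely that $U_n^2$ has the smallest Laplacian spectral radius among unicyclic graphs with $\Delta\geq 3$, together with interlacing (Lemma \ref{le2,3}) and the numerical estimate $\mu_1(U_5^2)>4.17$, to get the quantitative bound $\sigma_1(LI(U_n))>2.17$; you instead observe that $\Delta\geq 3$ forces $\mu_1\geq\Delta+1\geq 4$, and that equality $\mu_1=\Delta+1$ in Lemma \ref{le2,1} would force $\Delta=n-1$, i.e.\ $n=4$, contradicting $n\geq 5$, so $\mu_1>4$ and $\sigma_1>2$. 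Your route is more self-contained (no external ordering theorem, no numerics) and in fact yields the clean statement $\mu_1(U_n)>4$ for every unicyclic $U_n\not\cong C_n$ on $n\geq 5$ vertices, while the paper's route buys a stronger explicit gap below the extremal value, which is not needed for the theorem. Your upper bound is also derived slightly more directly ($\mu_1\leq n$ from Lemma \ref{le2,1}, plus the explicit check $\mu_1(G_{n,\,n})=n$ via its dominating vertex), whereas the paper passes entirely through $\sigma_1(LI(U_n))\leq\sigma_1(LI(G_{n,\,n}))$ and leaves the value $n-2$ implicit; the substance is the same.
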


\begin{proof} If $\Delta(U_n)\geq 3$, by Lemma \ref{le2,1}, we have $\mu_1(U_n)\geq \Delta(U_n)+1\geq 4$. Thus $\sigma_1(LI(U_n))=\mu_1(U_n)-2$. By Lemma \ref{le2,10}, we have $\sigma_1(LI(U_n))\leq \sigma_1(LI(G_{n,\,n}))$ with equality if and only if $U_n\cong G_{n,\,n}$. From \cite{SSG}, it follows that $U_n^2$, shown in Fig. 5.1, is the smallest Laplacian spectral radii among all unicyclic graphs with $\Delta(U_n)\geq 3$. Hence $\sigma_1(LI(U_n))\geq \sigma_1(LI(U_n^2))$. By Lemma \ref{le2,3}, we have $\sigma_1(LI(U_n^2))=\mu_1(U_n^2)-2\geq \mu_1(U_5^2)-2> 2.17008$. Thus $\sigma_1(LI(U_n))> 2.17008$.

If $\Delta(U_n)=2$, then $U_n=C_n$. Thus $\sigma_1(LI(U_n))=\sigma_1(LI(C_n))=2$.

Combining the above arguments, we have the proof. $\Box$

\end{proof}

\begin{theorem}\label{th5,5} 
Let $U_n$ be a unicyclic graph with $n\geq 12$ vertices. Then
$$||LI(C_n)||_{F_2} \leq ||LI(U_n)||_{F_2}\leq n.$$
The equality in the left hand side holds if and only if $U_n\cong C_n$, and the equality in the right hand side holds if and only if $U_n\cong G_{n,\,n}$.
\end{theorem}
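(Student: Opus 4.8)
The plan is to prove the two-sided bound on $\|LI(U_n)\|_{F_2}$ by the same case analysis used in Theorem~\ref{th5,4}, distinguishing $\Delta(U_n)\ge 3$ from $\Delta(U_n)=2$. First I would observe that, exactly as in Case~1/Case~2 of Theorem~\ref{th5,2}, for any unicyclic graph we have
\[
\|LI(U_n)\|_{F_2}=\max\Bigl\{\mu_1(U_n)+\mu_2(U_n)-4+\tfrac4n,\ \mu_1(U_n)\Bigr\},
\]
since the two largest singular values of $LI(U_n)$ are either $\mu_1-\frac{2m}{n}$ and $\mu_2-\frac{2m}{n}$, or $\mu_1-\frac{2m}{n}$ and $\frac{2m}{n}=2-\frac2n$, according to whether $\mu_2(U_n)\ge 2$ or not. (Here $m=n$, so $\frac{2m}{n}=2$.)

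For the \emph{upper bound}, when $\Delta(U_n)\ge 3$ Lemma~\ref{le2,1} gives $\mu_1(U_n)\ge\Delta+1\ge 4$, hence $\sigma_1(LI(U_n))=\mu_1(U_n)-2$. Lemma~\ref{le2,10} then supplies both $\mu_1(U_n)\le\mu_1(G_{n,n})$ and $S_2(L(U_n))\le S_2(L(G_{n,n}))$, with equality only at $G_{n,n}$; combining these with the displayed formula yields $\|LI(U_n)\|_{F_2}\le\|LI(G_{n,n})\|_{F_2}$. It then remains to evaluate $\|LI(G_{n,n})\|_{F_2}$. Using the explicit Laplacian spectrum of $G_{n,n}$ (it is a threshold-like graph — $G_{n,n}$ is the graph with one triangle and $n-3$ pendant edges at a common vertex, whose Laplacian eigenvalues include $n$ and small eigenvalues), I would compute $\mu_1(G_{n,n})+\mu_2(G_{n,n})$ via the complement, as was done for $S_{\lceil(n-2)/2\rceil,\lfloor(n-2)/2\rfloor}$ in Theorem~\ref{th5,2}, and check that it equals $n+4-\frac4n$, so that $\|LI(G_{n,n})\|_{F_2}=n$; one also checks $\mu_1(G_{n,n})\le n$ so the maximum is attained by the first term. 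The case $\Delta(U_n)=2$ forces $U_n=C_n$, where the Laplacian eigenvalues are $2-2\cos(2\pi j/n)$, all in $[0,4]$, so $\sigma_i(LI(C_n))=|2-2\cos(2\pi j/n)-2|=2|\cos(2\pi j/n)|\le 2<n$ for $n\ge 12$; this handles the upper bound in the remaining case.

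For the \emph{lower bound}, Lemma~\ref{le2,11} gives $spr(L(U_n))\ge spr(L(C_n))$ with equality only at $C_n$. Since $\sigma_1(LI(U_n))+\sigma_2(LI(U_n))\ge \sigma_1(LI(U_n))+\bigl|\mu_2(U_n)-2\bigr|$ and the extremal case $C_n$ gives $\|LI(C_n)\|_{F_2}=2(\cos\frac{2\pi}{n}+\cos\frac{4\pi}{n})$ or a similar trigonometric sum, I would instead argue directly: $\|LI(U_n)\|_{F_2}=\sigma_1+\sigma_2$ is bounded below by the sum of the two largest values of $|\mu_i(U_n)-2|$, which for $U_n=C_n$ equals $2\cos\frac{2\pi}{n}+2\cos\frac{4\pi}{n}$; then for $U_n\ne C_n$ one uses $\mu_1(U_n)\ge 4$ (from $\Delta\ge 3$) to get $\sigma_1(LI(U_n))=\mu_1(U_n)-2\ge 2$, while $\|LI(C_n)\|_{F_2}<4$, and a short estimate of $\sigma_2(LI(U_n))$ closes the gap — alternatively one invokes Lemma~\ref{le2,11} together with the elementary inequality $\sigma_1+\sigma_2\ge \frac12 spr(L(U_n))+\text{(something)}$; I expect the cleanest route is to show $\|LI(U_n)\|_{F_2}\ge \|LI(C_n)\|_{F_2}$ by noting both $\mu_1$ and $S_2(L(\cdot))$ are minimized at $C_n$ among unicyclic graphs (the former is classical, the latter follows since $S_2(L(C_n))=2\cos\frac{2\pi}{n}+2\cos\frac{4\pi}{n}+\cdots$ wait — more simply $S_2(L(C_n))\le S_2(L(U_n))$ because $C_n$ is a subgraph-minimal structure and $\mu_2$ is monotone under edge addition within the unicyclic class).

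The main obstacle I anticipate is the lower-bound direction: unlike $\mu_1$, the quantity $\min_{U_n}S_2(L(U_n))$ over unicyclic graphs is not as immediately citable, so I would either need a direct comparison argument (perturbing $C_n$ to any other unicyclic graph and tracking $\sigma_1+\sigma_2$) or a reference giving $S_2(L(C_n))\le S_2(L(U_n))$. The secondary technical point is the exact computation of $\|LI(G_{n,n})\|_{F_2}=n$: this requires pinning down $\mu_2(G_{n,n})$ precisely (not just $\mu_1$), which I would do through the characteristic polynomial of $\overline{G_{n,n}}$ and Vieta, mirroring the treatment of the double star in Theorem~\ref{th5,2}. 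Everything else is bookkeeping within the $\max\{\cdot,\cdot\}$ formula and the cited lemmas.
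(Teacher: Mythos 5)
Your upper-bound argument is essentially the paper's: write $||LI(U_n)||_{F_2}=\max\{\mu_1(U_n),\,S_2(L(U_n))-4\}$ and apply Lemma \ref{le2,10} to both entries of the maximum, then evaluate the extremal graph. Two slips here are harmless but should be fixed: since $m=n$ for a unicyclic graph, the $\tfrac4n$ in your displayed formula should not appear; and for $G_{n,n}=K_1\vee\bigl(K_2\cup(n-3)K_1\bigr)$ the Laplacian spectrum is $n,\,3,\,1$ (with multiplicity $n-3$), $0$, so $S_2(L(G_{n,n}))=n+3$ (not $n+4-\tfrac4n$) and $||LI(G_{n,n})||_{F_2}=\max\{n,\,n-1\}=n$ is attained by the $\mu_1$ term, not the $S_2$ term.

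The genuine gap is the lower bound, exactly where you flag it. Your ``cleanest route'' rests on the claim that $S_2(L(C_n))\le S_2(L(U_n))$ because ``$C_n$ is a subgraph-minimal structure and $\mu_2$ is monotone under edge addition within the unicyclic class''; this is not an argument: all unicyclic graphs on $n$ vertices have the same number of edges, $C_n$ is not a subgraph of any other unicyclic graph on $n$ vertices (its cycle is too long), so Lemma \ref{le2,3} gives nothing here and no edge-addition step is available inside the class. The paper supplies the missing ingredient differently: $S_2(L(C_n))=6+2\cos\tfrac{2\pi}{n}$ or $4+4\cos\tfrac{\pi}{n}$, hence $S_2(L(C_n))<8$, while for $U_n\ne C_n$ one passes to a spanning tree and quotes (from the proof of Lemma 4.4 in \cite{LHS}) that $\mu_1(T_n)+\mu_2(T_n)=q_1(T_n)+q_2(T_n)\ge 8$ for $n\ge 12$, so $S_2(L(U_n))\ge 8>S_2(L(C_n))$ by Lemma \ref{le2,3}; combined with $\mu_1(U_n)>\mu_1(C_n)$ from the proof of Theorem \ref{th5,4}, both entries of the maximum are strictly larger, which gives the inequality together with its equality case. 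Incidentally, your alternative sketch can be completed and is even shorter: for $U_n\ne C_n$ we have $\Delta\ge 3$ and $\Delta\ne n-1$, so $\mu_1(U_n)>4$ by Lemma \ref{le2,1}, while $\sigma_2(LI(U_n))\ge |\mu_n(U_n)-2|=2$, hence $||LI(U_n)||_{F_2}\ge \mu_1(U_n)>4\ge ||LI(C_n)||_{F_2}$ (note $||LI(C_n)||_{F_2}=4$ for even $n$, so your claim $||LI(C_n)||_{F_2}<4$ must be weakened to $\le 4$); but as written you left the ``short estimate of $\sigma_2$'' unspecified and leaned on the false monotonicity claim instead, so the lower-bound direction is not proved in your proposal.
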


\begin{proof}
Since $||LI(U_n)||_{F_2}=\max\{\mu_1(U_n), S_2(L(U_n))-4 \}$, by Lemma \ref{le2,10}, we have $||LI(U_n)||_{F_2}\leq ||LI(G_{n,\,n})||_{F_2}$ with equality if and only if $U_n\cong G_{n,\,n}$. It is well known that $S_2(L(C_n))=6+2\cos\frac{2\pi}{n}$ for even cycle, and $S_2(L(C_n))=4+4\cos\frac{\pi}{n}$ for odd cycle. Thus $S_2(L(C_n))<8$. From the proof of Lemma 4.4 in \cite{LHS}, it follows that $\mu_1(T_n)+\mu_2(T_n)=q_1(T_n)+q_2(T_n)\geq 8$ for $n\geq 12$. By Lemma \ref{le2,3}, we have $S_2(L(U_n))=\mu_1(U_n)+\mu_2(U_n)\geq 8$ for $n\geq 12$. Hence $S_2(L(U_n))>S_2(L(C_n))$ for $\mathcal{U}_n\setminus \{C_n\}$. By the proof of Theorem \ref{th5,4}, we have $\mu_1(U_n)>\mu_1(C_n)$ for $\mathcal{U}_n\setminus \{C_n\}$. Therefore, $||LI(U_n)||_{F_2}=\max\{\mu_1(U_n), S_2(L(U_n))-4 \} \geq ||LI(C_n)||_{F_2}$ with equality if and only if $U_n\cong C_n$. This completes the proof. $\Box$
\end{proof}

\begin{theorem}\label{th5,6} 
Let $U_n$ be a unicyclic graph with $n\geq 12$ vertices. Then
$$ ||LI(U_n)||_{F_3}\leq n+1$$
with equality if and only if $U_n\cong G_{n,\,n}$.
\end{theorem}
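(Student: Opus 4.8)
The plan is to mirror the structure of the proof of Theorem~\ref{th5,3}, replacing the tree bounds on $S_2,S_3$ and the Laplacian spread by the corresponding unicyclic bounds. First I would note that, since $U_n$ has $n$ vertices and $n$ edges, $\frac{2m}{n}=2$, so $\sigma_i(LI(U_n))=|\mu_i(U_n)-2|$ for each $i$, and $\sigma_1(LI(U_n))=\max\{\mu_1(U_n)-2,\,2\}$ (using Theorem~\ref{th3,1}, or simply $\mu_1(U_n)\le n$). Consequently the only three candidates for the largest three singular values are $\mu_1(U_n)-2$, $\mu_2(U_n)-2$, $\mu_3(U_n)-2$, or possibly the value $2$ arising from an eigenvalue near $0$. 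As in Theorem~\ref{th5,3}, I would write
$$||LI(U_n)||_{F_3}=\max\Bigl\{S_3(L(U_n))-6,\ S_2(L(U_n))-4+2,\ spr(L(U_n))+2+2\Bigr\}$$
— more precisely, the three cases are $\sigma_3=\mu_3-2$ (giving $S_3(L(U_n))-6$), $\sigma_3=2-\mu_n$ with $\sigma_2=\mu_2-2$ (giving $S_2(L(U_n))-4 + (2-\mu_n)\le S_2(L(U_n))-4+2$ since $\mu_n\ge 0$), and $\sigma_2=\sigma_3=2$ with $\sigma_1=\mu_1-2$ (giving $spr(L(U_n))+2$ after noting $\sigma_1+\sigma_2+\sigma_3 = \mu_1-2+4$), together with the degenerate subcases in which fewer than three terms are of the "$\mu_i-2$" type.

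Next I would bound each of the three quantities by $n+1$. For the first, I need $S_3(L(U_n))\le n+7$, i.e. the sum of the three largest Laplacian eigenvalues of a unicyclic graph is at most $n+7$; the natural source is the literature bound $S_k(L(U_n))\le m+\binom{k+1}{2}+c_k$ type estimates, and in fact the extremal graph $G_{n,n}$ should satisfy $S_3(L(G_{n,n}))=n+7$ exactly (its degree sequence and the fact that $G_{n,n}$ has two dominating-type vertices plus a triangle make $\mu_1,\mu_2,\mu_3$ explicitly computable, analogous to Lemma~\ref{le2,5} for threshold-like graphs). For the second, $S_2(L(U_n))\le n+3$ would suffice; this follows from Lemma~\ref{le2,12}(iii)-style results for unicyclic graphs (the analogue $S_2(L(U_n))\le S_2(L(G_{n,n}))$, which is exactly Lemma~\ref{le2,10}), combined with $S_2(L(G_{n,n}))=n+3$. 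For the third, $spr(L(U_n))\le n-2$, which is immediate from Lemma~\ref{le2,11} since $spr(L(G_{n,n}))=n-2$. In each case $n+1$ is reached, so $||LI(U_n)||_{F_3}\le n+1$.

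For the equality characterization I would argue that equality forces us into the case where the maximum is attained by the first branch with $S_3(L(U_n))=n+7$, and then invoke the uniqueness of the extremal unicyclic graph: by Lemma~\ref{le2,10} the strict inequality $S_2(L(U_n))<S_2(L(G_{n,n}))$ holds for $U_n\not\cong G_{n,n}$, and an analogous strict bound for $S_3$ (which I would either cite or derive from $S_3\le S_2 + \mu_3$ together with $\mu_3(U_n)\le \mu_3(G_{n,n})$ via interlacing/Lemma~\ref{le2,3}) pins down $U_n\cong G_{n,n}$. Finally I would verify directly that $||LI(G_{n,n})||_{F_3}=n+1$ by computing its first three Laplacian eigenvalues. \textbf{The main obstacle} I anticipate is not the case analysis but securing a clean, citable (or short self-contained) bound $S_3(L(U_n))\le n+7$ with equality iff $U_n\cong G_{n,n}$: the sum $S_k$ of the $k$ largest Laplacian eigenvalues of unicyclic graphs is less standard than $S_2$, and I may need to combine Lemma~\ref{le2,10} with an interlacing estimate $\mu_3(U_n)\le 4$ (true because deleting a suitable edge leaves a forest, or because $U_n$ has at most... ) to get the extra $\mu_3$ term under control; handling the borderline graphs where $\mu_3$ is close to $4$ — small-diameter unicyclic graphs with two high-degree vertices — is where the argument will need the most care.
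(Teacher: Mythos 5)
Your decomposition $||LI(U_n)||_{F_3}=\max\{S_3(L(U_n))-6,\ S_2(L(U_n))-2,\ spr(L(U_n))+2\}$ (using $2m/n=2$ and $\mu_n=0$) is exactly the paper's, and your handling of the $S_2$-branch via Lemma \ref{le2,10} with $S_2(L(G_{n,\,n}))=n+3$ matches the paper. However, there is a genuine gap: your numerical data for the extremal graph are wrong, and the equality characterization you build on them collapses. Since $G_{n,\,n}$ is a threshold graph (a triangle with $n-3$ pendant edges at one vertex of the common edge), Lemma \ref{le2,5} gives its Laplacian spectrum as $n,\,3,\,1^{(n-3)},\,0$; hence $S_3(L(G_{n,\,n}))=n+4$, not $n+7$, and $spr(L(G_{n,\,n}))=n-1$, not $n-2$ (the paper states $n-1$ explicitly). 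Consequently equality at $G_{n,\,n}$ is attained through the $S_2$-branch and the $spr$-branch — the three largest singular values of $LI(G_{n,\,n})$ are $n-2,\,2,\,1$, the value $2$ coming from the eigenvalue $0$ — and never through the $S_3$-branch. Your plan ``equality forces $S_3(L(U_n))=n+7$, then use uniqueness of the $S_3$-extremal graph'' cannot work: $S_3=n+7$ never occurs, the identity $S_3(L(G_{n,\,n}))=n+7$ is false, and ``verifying $||LI(G_{n,\,n})||_{F_3}=n+1$ from its first three Laplacian eigenvalues'' would actually yield $S_3-6=n-2$.

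To close the argument along your lines two repairs are needed. First, for the equality case you must show the $S_3$-branch stays strictly below $n+1$ for all unicyclic graphs; your bound $S_3\le n+7$ is too weak (it permits $S_3-6=n+1$), and your fallback estimate $\mu_3(U_n)\le 4$ is false in general (a triangle with many pendant edges at each of its three vertices has $\mu_3$ on the order of the third largest degree, by $\mu_k\ge d_k-k+2$). The paper instead cites Corollary 4.1 of \cite{DZ}, $S_3(L(U_n))\le n+6$, so that branch is at most $n<n+1$. Second, since in fact $spr(L(G_{n,\,n}))=n-1$, the $spr$-branch does reach $n+1$, so it cannot be dismissed as $\le n$; uniqueness there must come from the strict inequality $spr(L(U_n))<spr(L(G_{n,\,n}))$ for $U_n\not\cong G_{n,\,n}$ in Lemma \ref{le2,11}, combined with the strict $S_2$ inequality of Lemma \ref{le2,10}. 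With these corrections (which leave the upper bound $n+1$ intact but redo the equality analysis), your argument becomes precisely the paper's proof.
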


\begin{proof} From Corollary 4.1 in \cite{DZ}, we have $S_3(L(U_n))\leq n+6$. By Lemmas \ref{le2,10} and \ref{le2,11}, we have $S_2(L(U_n))< S_2(L(G_{n,\,n}))$ and $spr(L(U_n))<spr(L(G_{n,\,n}))=n-1$ for
$U_n\in \mathcal{U}_n\setminus \{G_{n,\,n}\}$. Thus
\begin{eqnarray*}
||LI(U_n)||_{F_2} & = & \sigma_1(LI(U_n))+\sigma_2(LI(U_n))+\sigma_3(LI(U_n))\\
& = &  \max\{S_2(L(U_n))-2, S_3(L(U_n))-6, spr(L(U_n))+2\}\\
& \leq & ||LI(G_{n,\,n})||_{F_3}\\
& = & n+1
\end{eqnarray*}
with equality if and only if $U_n\cong G_{n,\,n}$.
This completes the proof. $\Box$
\end{proof}

\begin{theorem}\label{th5,7} 
Let $B_n$ be a bicyclic graph with $n\geq 17$ vertices. Then
$$\sigma_1(LI(B_n^1))\leq \sigma_1(LI(B_n))\leq \sigma_1(LI(B_n^{*})).$$
The equality in the left hand side holds if and only if $B_n\cong B_n^1$, and the equality in the right hand side holds if and only if $B_n\cong B_n^{*}$.
\end{theorem}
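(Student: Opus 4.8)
\textbf{Plan of proof for Theorem \ref{th5,7}.}

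The plan is to reduce the statement about $\sigma_1(LI(B_n))$ entirely to known facts about the Laplacian spectral radius of bicyclic graphs. First I would observe that for a bicyclic graph on $n$ vertices we have $m=n+1$, so $\frac{2m}{n}=2+\frac{2}{n}$ and $\frac{4m}{n}=4+\frac{4}{n}$. Since a bicyclic graph always contains a cycle and at least one extra edge, a short degree argument shows $\Delta(B_n)\ge 3$ except for a handful of small exceptional graphs consisting of two cycles sharing at most one vertex or joined by a path, all of which have at most a few vertices and are excluded once $n\ge 17$. Hence $\Delta(B_n)+1\ge 4>\frac{4m}{n}$ for $n\ge 17$, and Lemma \ref{le2,1} gives $\mu_1(B_n)\ge\Delta+1\ge 4\ge\frac{4m}{n}$ only when $\Delta\ge 4$; for $\Delta=3$ one still needs $\mu_1(B_n)\ge 4+\frac{4}{n}$, which follows because a vertex of degree $3$ together with its incident edges forms a $K_{1,3}$ subgraph but this only gives $\mu_1\ge 4$, so the finer bound must come from Lemma \ref{le2,3} applied to a slightly larger subgraph (a $K_{1,3}$ with one extra pendant edge, whose Laplacian spectral radius exceeds $4+\frac{4}{n}$ for large $n$). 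In either case Theorem \ref{th3,1}(ii) applies and yields
$$\sigma_1(LI(B_n))=\mu_1(B_n)-\frac{2m}{n}=\mu_1(B_n)-2-\frac{2}{n}.$$

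With this identity in hand, the theorem becomes equivalent to the two-sided bound
$$\mu_1(B_n^1)\le\mu_1(B_n)\le\mu_1(B_n^{*})$$
for all bicyclic $B_n$ with $n\ge 17$, with equality characterizations. The upper bound together with its equality case is exactly Lemma \ref{le2,12}(i). For the lower bound I would invoke the companion minimization result for the Laplacian spectral radius over bicyclic graphs: $B_n^1$ (which should be the bicyclic graph minimizing $\mu_1$, typically a long cycle-like or path-like bicyclic graph) attains the minimum, as established in the cited literature on extremal Laplacian spectral radii of bicyclic graphs. Since the map $t\mapsto t-2-\frac{2}{n}$ is strictly increasing, both inequalities and their equality cases transfer verbatim to $\sigma_1(LI(\cdot))$.

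The main obstacle I anticipate is twofold: first, cleanly handling the degree-$3$ case to guarantee $\mu_1(B_n)\ge\frac{4m}{n}$ so that Theorem \ref{th3,1}(ii) is legitimately applicable for \emph{every} bicyclic graph on $n\ge 17$ vertices (this is where the threshold $n\ge 17$ enters, mirroring the Remark after Corollary \ref{cor3,1}); and second, pinning down precisely which bicyclic graph $B_n^1$ is and citing the correct minimization reference for $\mu_1$ over $\mathcal{B}_n$, since the excerpt states the maximizer $B_n^{*}$ but not the minimizer. Once those two points are secured, the rest is a direct translation through the monotone shift by $2+\frac{2}{n}$, so no substantial new computation is required.
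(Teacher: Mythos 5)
Your overall skeleton is the same as the paper's: establish the identity $\sigma_1(LI(B_n))=\mu_1(B_n)-2-\frac{2}{n}$ for every bicyclic graph with $n\ge 17$, then transfer the known extremal results for $\mu_1$ (Lemma \ref{le2,12}(i) for the maximizer, the minimization result of Shen--Shao--Guo for $B_n^1$ for the minimizer) through the monotone shift. The upper and lower bound steps are exactly what the paper does. The genuine gap is in how you certify $\mu_1(B_n)\ge \frac{4m}{n}=4+\frac{4}{n}$ in the case $\Delta(B_n)=3$. Your proposed certificate, a $K_{1,3}$ with one extra pendant edge, has Laplacian characteristic polynomial $x(x-1)(x^3-7x^2+13x-5)$, so its spectral radius is the largest root of $x^3-7x^2+13x-5$, approximately $4.170$. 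But $4+\frac{4}{n}\ge 4+\frac{4}{23}\approx 4.174$ for $n\le 23$, so this subgraph does \emph{not} force $\mu_1(B_n)\ge 4+\frac{4}{n}$ in the range $17\le n\le 23$; your argument only works for $n\ge 24$ (you yourself hedge with ``for large $n$''). Since the theorem is claimed for all $n\ge 17$, this step fails as written, and it is precisely the step that determines the threshold.

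The paper closes this gap globally rather than locally: since $B_n^1$ minimizes the Laplacian spectral radius over all bicyclic graphs of order $n$, it suffices to bound $\mu_1(B_n^1)$ from below, and because $B_n^1=U_n^2+e$ one gets, via Lemma \ref{le2,3}, $\mu_1(B_n^1)\ge\mu_1(U_n^2)\ge\mu_1(U_{10}^2)>4.23566$, which exceeds $4+\frac{4}{n}$ exactly when $n\ge 17$. So the same citation you planned to use only for the left-hand inequality also delivers the identity $\sigma_1(LI(B_n))=\mu_1(B_n)-\frac{2m}{n}$ uniformly over $\mathcal{B}_n$, with no case analysis on $\Delta$. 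If you want to keep a purely local subgraph argument for $\Delta=3$, you would need a larger certificate (e.g.\ a degree-$3$ vertex with sufficiently long branches, in the spirit of $U_{10}^2$), together with a structural argument that every bicyclic graph with $\Delta=3$ and $n\ge 17$ contains such a subgraph; that is additional work your proposal does not supply. (Minor slips: the inequality ``$\Delta+1\ge 4>\frac{4m}{n}$'' is false since $\frac{4m}{n}=4+\frac{4}{n}$, and every bicyclic graph has $\Delta\ge 3$ because its degree sum is $2n+2$, so there are no exceptional graphs to exclude.)
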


\begin{proof} Since $B_n^1=U_n^2+e$, by Lemma \ref{le2,3}, we have
$$\mu_1(B_n^1)-2-\frac{2}{n}\geq \mu_1(U_n^2)-2-\frac{2}{n}\geq \mu_1(U_{10}^2)-2-\frac{2}{n}> 2.23566-\frac{2}{n}>2+\frac{2}{n}$$
for $n\geq 17$. From \cite{SSG}, we know that $B_n^1$, shown in Fig. 5.1, is the smallest Laplacian spectral radii among all bicyclic graphs. Thus $\sigma_1(LI(B_n))=\mu_1(B_n)-2-\frac{2}{n}\geq \mu_1(B_n^1)-2-\frac{2}{n}$ with equality if and only if $G\cong B_n^1$. By Lemma \ref{le2,12}, we have
$\sigma_1(LI(B_n))=\mu_1(B_n)-2-\frac{2}{n}\leq \mu_1(B_n^{*})-2-\frac{2}{n}=\sigma_1(LI(B_n^{*}))$ with equality if and only if $B_n\cong B_n^{*}$.
This completes the proof. $\Box$
\end{proof}

\begin{theorem}\label{th5,8} 
Let $B_n$ be a bicyclic graph with $n$ vertices. Then
$$ ||LI(B_n)||_{F_2}\leq ||LI(G_{n+1,\,n})||_{F_2}$$
with  equality if and only if $B_n\cong G_{n+1,\,n}$.
\end{theorem}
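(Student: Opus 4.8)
The plan is to follow the same template used in Theorems~\ref{th5,5} and~\ref{th5,6}, reducing the Ky Fan $2$-norm to a maximum of two Laplacian quantities and then bounding each one by its value at $G_{n+1,\,n}$. First I would write $||LI(B_n)||_{F_2}=\sigma_1(LI(B_n))+\sigma_2(LI(B_n))$, and observe that since the eigenvalues of $LI(B_n)$ are $\mu_i(B_n)-\frac{2m}{n}=\mu_i(B_n)-2-\frac{2}{n}$ for $i=1,\dots,n$, the two largest singular values are picked from among $\{\mu_1(B_n)-2-\frac{2}{n},\ \mu_2(B_n)-2-\frac{2}{n},\ \frac{2m}{n}=2+\frac{2}{n}\}$. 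Hence
\begin{eqnarray*}
||LI(B_n)||_{F_2}=\max\left\{\,\mu_1(B_n)+\mu_2(B_n)-4-\tfrac{4}{n},\ \ \mu_1(B_n)\,\right\},
\end{eqnarray*}
exactly as in the unicyclic case; this uses only that $2+\frac{2}{n}\le \mu_1(B_n)-2-\frac{2}{n}$ for bicyclic graphs on enough vertices, which follows from Lemma~\ref{le2,1} together with the fact that $G_{n+1,\,n}$-type bicyclic graphs have large maximum degree (and for the extremal comparison we only need the inequality, not equality, so degenerate cases with tiny $n$ can be handled directly or excluded by a hypothesis $n$ large).

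Next I would bound each of the two terms by its value at $G_{n+1,\,n}$. For the first term, Lemma~\ref{le2,12}(iii) gives $S_2(L(B_n))=\mu_1(B_n)+\mu_2(B_n)< S_2(L(G_{n+1,\,n}))$ for all $B_n\in\mathcal{B}_n\setminus\{G_{n+1,\,n}\}$, so $\mu_1(B_n)+\mu_2(B_n)-4-\frac{4}{n}< S_2(L(G_{n+1,\,n}))-4-\frac{4}{n}$. For the second term, Lemma~\ref{le2,12}(i) gives $\mu_1(B_n)<\mu_1(B_n^{*})$; I then need $\mu_1(B_n^{*})\le S_2(L(G_{n+1,\,n}))-4-\frac{4}{n}$, i.e. that the first (spectral-radius) quantity already dominates at the extremal graph. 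This should follow because $G_{n+1,\,n}$ has both a large $\mu_1$ (of order $n$) and $\mu_2\ge 3$ by Lemma~\ref{le2,4} (its second-largest degree is $3$), so $S_2(L(G_{n+1,\,n}))$ exceeds $\mu_1(B_n^{*})$ by a bounded-below margin; alternatively one can compute $\mu_1(B_n^{*})$ and $S_2(L(G_{n+1,\,n}))$ explicitly from their characteristic polynomials (both graphs are small perturbations of a star) and compare the largest roots, as was done for $S_{1,n-3}$ versus $S_{2,n-4}$ in the proof of Theorem~\ref{th5,2}.

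Putting the two bounds together yields $||LI(B_n)||_{F_2}<\max\{S_2(L(G_{n+1,\,n}))-4-\frac{4}{n},\ \mu_1(G_{n+1,\,n})\}=||LI(G_{n+1,\,n})||_{F_2}$ for every $B_n\ne G_{n+1,\,n}$, with equality precisely at $G_{n+1,\,n}$; I would finish by noting that at $G_{n+1,\,n}$ itself the identity $||LI(G_{n+1,\,n})||_{F_2}=\max\{S_2(L(G_{n+1,\,n}))-4-\frac{4}{n},\ \mu_1(G_{n+1,\,n})\}$ holds by the same reduction, and that (as for the unicyclic case) the first term is the one that achieves the maximum. The main obstacle I anticipate is the second comparison $\mu_1(B_n^{*})$ versus $S_2(L(G_{n+1,\,n}))-4-\frac{4}{n}$: unlike Lemma~\ref{le2,12}(iii), which is quoted ready-made, this requires either an explicit root estimate for the cubic factors of the two Laplacian characteristic polynomials (in the spirit of the $f_1,f_2$ analysis in Theorem~\ref{th5,2}) or a clean structural argument that $S_2$ at the $S_2$-maximizer already beats $\mu_1$ at the $\mu_1$-maximizer; a careful check for small $n$ may also be needed, which is presumably why a lower bound on $n$ will appear in the final statement.
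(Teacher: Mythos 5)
Your reduction $||LI(B_n)||_{F_2}=\max\{\mu_1(B_n),\,S_2(L(B_n))-4-\frac{4}{n}\}$ and the treatment of the $S_2$-term via Lemma \ref{le2,12}(iii) coincide with the paper's (very terse) proof. The genuine gap is in your plan for the $\mu_1$-term. You propose to finish by proving $\mu_1(B_n^{*})\le S_2(L(G_{n+1,\,n}))-4-\frac{4}{n}$, i.e.\ that the $S_2$-quantity at the $S_2$-maximizer dominates the spectral radius at the $\mu_1$-maximizer; this inequality is false, so no root estimate will deliver it. Indeed $G_{n+1,\,n}$ is a threshold graph (two isolated vertices, a dominating vertex, then $n-4$ isolated vertices, then a final dominating vertex), so by Lemma \ref{le2,5} its Laplacian spectrum is $\{n,4,2,1^{(n-4)},0\}$ and $S_2(L(G_{n+1,\,n}))-4-\frac{4}{n}=n-\frac{4}{n}$; on the other hand $B_n^{*}$ has a dominating vertex, hence a disconnected complement, and Lemma \ref{le2,1} gives $\mu_1(B_n^{*})=n>n-\frac{4}{n}$. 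For the same reason your closing claim that at $G_{n+1,\,n}$ the first ($S_2$) term achieves the maximum is backwards, both here and in the unicyclic case: $||LI(G_{n+1,\,n})||_{F_2}=\max\{n,\,n-\frac{4}{n}\}=n$ is attained by the $\mu_1$-term, just as $S_2(L(G_{n,\,n}))-4=n-1<n=\mu_1(G_{n,\,n})$ in Theorem \ref{th5,5}.

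The repair is simpler than the comparison you were bracing for, and it is what the paper's one-line appeal to Lemma \ref{le2,12} implicitly uses: handle the $\mu_1$-term against $\mu_1(G_{n+1,\,n})$ itself, not against the $S_2$-term. By Lemma \ref{le2,1}, $\mu_1(B_n)\le n=\mu_1(G_{n+1,\,n})\le ||LI(G_{n+1,\,n})||_{F_2}$, so both entries of the max are bounded by $||LI(G_{n+1,\,n})||_{F_2}$ and no analysis of $B_n^{*}$ is needed at all (the $\mu_1$-extremal bicyclic graph is, up to the ambiguity in its definition, $G_{n+1,\,n}$ itself). One caution, which affects your claimed strict inequality as much as the statement's ``only if'' part: the bicyclic graph obtained from $K_{1,\,n-1}$ by adding two independent edges also has a dominating vertex, Laplacian spectrum $\{n,3,3,1^{(n-4)},0\}$, and therefore $||LI||_{F_2}=\max\{n,\,n-1-\frac{4}{n}\}=n$ as well; so equality cannot be confined to $G_{n+1,\,n}$ by this bookkeeping, and the uniqueness assertion needs to be re-examined rather than taken for granted. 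Finally, your justification of the max formula only guarantees $\sigma_1=\mu_1-2-\frac{2}{n}$ when $\mu_1\ge 4+\frac{4}{n}$, which a bicyclic graph with $\Delta=3$ need not satisfy; this is harmless for the upper bound (then $||LI(B_n)||_{F_2}\le 4+\frac{4}{n}\le n$), but it should be said rather than waved at.
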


\begin{proof}
Since $||LI(B_n)||_{F_2}=\max\{\mu_1(B_n), S_2(L(B_n))-4-\frac{4}{n} \}$, by Lemma \ref{le2,12}, we have the proof. $\Box$
\end{proof}

\begin{theorem}\label{th5,9} 
Let $B_n$ be a bicyclic graph with $n$ vertices. Then
$$ ||LI(B_n)||_{F_3}\leq n+2-\frac{2}{n}$$
with equality if and only if $B_n\cong G_{n+1,\,n}$.
\end{theorem}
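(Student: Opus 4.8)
The plan is to mirror the structure of the proof of Theorem~\ref{th5,6}, since $||LI(B_n)||_{F_3}$ decomposes the same way as $||LI(U_n)||_{F_3}$ into a maximum over three competing quantities. First I would note, using Theorem~\ref{th3,1}, that for a bicyclic graph $B_n$ with $m=n+1$ edges the singular values of $LI(B_n)$ are simply the absolute values $|\mu_i(B_n)-\tfrac{2(n+1)}{n}| = |\mu_i(B_n)-2-\tfrac{2}{n}|$, so that $\sigma_1+\sigma_2+\sigma_3$ equals the sum of the three largest such absolute deviations. By the usual case analysis on where the "negative tail" of the deviations sits, this gives
$$||LI(B_n)||_{F_3}=\max\Big\{S_3(L(B_n))-6-\tfrac{6}{n},\; S_2(L(B_n))-2-\tfrac{2}{n},\; spr(L(B_n))+2-\tfrac{2}{n}\Big\},$$
exactly as in the unicyclic case but shifted because $\tfrac{2m}{n}=2+\tfrac{2}{n}$.

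Next I would bound each of the three terms. For $spr(L(B_n))$, part (ii) of Lemma~\ref{le2,12} gives $spr(L(B_n))\le spr(L(B_n^{*}))$, and since $B_n^{*}$ is obtained from a star $K_{1,n-1}$ by adding two edges one computes $\mu_1(B_n^{*})\le n$ and $\mu_{n-1}(B_n^{*})\ge 0$, hence $spr(L(B_n))< n$, so the third term is at most $n+2-\tfrac{2}{n}$. For $S_2(L(B_n))$, part (iii) of Lemma~\ref{le2,12} gives $S_2(L(B_n))\le S_2(L(G_{n+1,n}))$; since $G_{n+1,n}$ is a threshold graph (or by direct computation of its Laplacian spectrum — it has a dominating vertex of degree $n-1$), $\mu_1(G_{n+1,n})=n$ and $\mu_2(G_{n+1,n})$ is small, so $S_2(L(B_n))-2-\tfrac{2}{n}$ is well below $n+2-\tfrac{2}{n}$. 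For $S_3(L(B_n))$ I would invoke a known bound $S_3(L(B_n))\le n+8$ for bicyclic graphs (the bicyclic analogue of the tree bound $S_3(L(T_n))< n+4-\tfrac{4}{n}$ from \cite{FHRT} and the unicyclic bound $S_3(L(U_n))\le n+6$ from \cite{DZ}), whence $S_3(L(B_n))-6-\tfrac{6}{n}\le n+2-\tfrac{6}{n}< n+2-\tfrac{2}{n}$.

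Combining these three estimates yields $||LI(B_n)||_{F_3}\le n+2-\tfrac{2}{n}$. For the equality discussion I would check directly that $||LI(G_{n+1,n})||_{F_3}=n+2-\tfrac{2}{n}$: since $G_{n+1,n}$ has $\mu_1=n$ and $spr(L(G_{n+1,n}))=n$ (as $\mu_{n-1}=0$), the third term alone attains $n+2-\tfrac{2}{n}$. Uniqueness then follows because the strict inequalities in Lemma~\ref{le2,12}(ii) force $spr(L(B_n))<spr(L(G_{n+1,n}))$ for every other bicyclic graph, while the other two terms are strictly smaller than $n+2-\tfrac{2}{n}$ for all $B_n$; hence the overall maximum is strictly less unless $B_n\cong G_{n+1,n}$. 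The main obstacle I anticipate is pinning down the sharp constant in the $S_3(L(B_n))$ bound — if the literature only gives $S_3(L(B_n))\le n+8$ but the true extremal value for some bicyclic graph were larger than $n+2-\tfrac{2}{n}+\tfrac{6}{n}$, the argument would collapse; so one must verify that no bicyclic graph has $S_3(L(B_n))$ large enough to beat the spread term, which may require either a careful reference or a short separate argument (e.g. bounding $\mu_3(B_n)\le \Delta(B_n)+1$ together with $S_2$ control) to close the gap cleanly.
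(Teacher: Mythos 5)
Your overall plan --- writing $||LI(B_n)||_{F_3}$ as a maximum of three competing quantities and bounding each, exactly as in Theorem~\ref{th5,6} --- is the same route the paper takes, and your worry about the $S_3$ term is resolved there by citing Corollary~4.2 of \cite{DZ}, which gives $S_3(L(B_n))\le n+7$, hence $S_3(L(B_n))-6-\frac{6}{n}\le n+1-\frac{6}{n}<n+2-\frac{2}{n}$ (even your guessed $n+8$ would do). However, two concrete errors break your argument. First, since $\frac{2m}{n}=2+\frac{2}{n}$ for a bicyclic graph, the ``one large deviation, two small deviations'' term is $\bigl(\mu_1-\frac{2m}{n}\bigr)+\bigl(\frac{2m}{n}-\mu_{n-1}\bigr)+\bigl(\frac{2m}{n}-0\bigr)=spr(L(B_n))+2+\frac{2}{n}$, not $spr(L(B_n))+2-\frac{2}{n}$; with the correct sign your estimate $spr(L(B_n))<n$ only yields a bound below $n+2+\frac{2}{n}$, which is not enough, and you instead need $spr(L(B_n))\le spr(L(B_n^{*}))\le n-1$ (Lemma~\ref{le2,12}(ii) together with the fact that $B_n^{*}$ has a dominating vertex, so $\mu_1(B_n^{*})=n$ and $\mu_{n-1}(B_n^{*})\ge 1$), which pushes this term down to $n+1+\frac{2}{n}<n+2-\frac{2}{n}$ for $n\ge 5$.

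Second, and more seriously, you identify the wrong extremal term. $G_{n+1,\,n}$ is connected, so $\mu_{n-1}(G_{n+1,\,n})>0$; in fact it is a threshold graph, and Lemma~\ref{le2,5} gives its Laplacian spectrum as $n,4,2,1,\dots,1,0$, so $spr(L(G_{n+1,\,n}))=n-1$ and the spread term contributes only $n+1+\frac{2}{n}$, strictly less than $n+2-\frac{2}{n}$. The value $n+2-\frac{2}{n}$ is attained by the term you dismissed as ``well below'' the bound: $S_2(L(G_{n+1,\,n}))=n+4$, hence $S_2(L(G_{n+1,\,n}))-2-\frac{2}{n}=n+2-\frac{2}{n}$ exactly. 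Consequently the equality and uniqueness discussion must hinge on Lemma~\ref{le2,12}(iii), the strict maximality of $S_2(L(G_{n+1,\,n}))$ among bicyclic graphs, as the paper does; your uniqueness argument, which rests on the false claims $\mu_{n-1}(G_{n+1,\,n})=0$, $spr(L(G_{n+1,\,n}))=n$, and on the other two terms being strictly below the bound for every $B_n$, is contradicted already by $B_n\cong G_{n+1,\,n}$ itself and leaves open the case of a graph whose $S_2$ term reaches $n+2-\frac{2}{n}$. Repair: keep your decomposition with the corrected third term, bound the $S_3$ term by \cite{DZ}, the spread term by Lemma~\ref{le2,12}(ii) as above, and the $S_2$ term by Lemma~\ref{le2,12}(iii), noting that only the last inequality can be tight and only for $G_{n+1,\,n}$.
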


\begin{proof} From Corollary 4.2 in \cite{DZ}, we have $S_3(L(B_n))\leq n+7$. By Lemma \ref{le2,12}, we have
\begin{eqnarray*}
||LI(B_n)||_{F_2} & = & \sigma_1(LI(U_n))+\sigma_2(LI(U_n))+\sigma_3(LI(U_n))\\
& = &  \max\{S_2(L(B_n))-2-\frac{2}{n}, S_3(L(B_n))-6-\frac{6}{n}, spr(L(B_n))+2+\frac{2}{n}\}\\
& \leq & ||LI(G_{n+1,\,n})||_{F_3}\\
& = & n+2-\frac{2}{n}
\end{eqnarray*}
with equality if and only if $B_n\cong G_{n+1,\,n}$.
This completes the proof. $\Box$
\end{proof}

Based on the conjecture of Guan et al. \cite{GZW}, we present the following conjecture on the uniqueness of the extremal graph.

\begin{conjecture}
Among all connected graphs with $n$ and $m$ edges $n\leq m \leq 2n-3$,
the $G_{m,\, n}$ is the unique graph with maximal value of $||LI(G)||_{F_2}$ and $||LI(G)||_{F_3}$.
\end{conjecture}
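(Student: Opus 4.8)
The plan is to express $\|LI(G)\|_{F_2}$ and $\|LI(G)\|_{F_3}$ in terms of the Laplacian invariants $\mu_1$, $S_2(L(\cdot))$, $S_3(L(\cdot))$ and $spr(L(\cdot))$, and then to deduce that $G_{m,n}$ is the unique maximiser from the corresponding (known or conjectural) extremal results for those invariants; essentially everything except one ingredient is a short spectral computation or a result already quoted in the paper, and the one real obstacle is the $S_2$-maximality of $G_{m,n}$, which is exactly the conjecture of Guan et al. First I would record, as in the proofs of Theorems \ref{th5,3}, \ref{th5,5}, \ref{th5,6} and \ref{th5,9}, the identities valid for any connected $G$ with $\mu_1(G)\ge\tfrac{4m}{n}$:
$$\|LI(G)\|_{F_2}=\max\bigl\{\,\mu_1(G),\ S_2(L(G))-\tfrac{4m}{n}\,\bigr\},\quad \|LI(G)\|_{F_3}=\max\bigl\{\,S_2(L(G))-\tfrac{2m}{n},\ S_3(L(G))-\tfrac{6m}{n},\ spr(L(G))+\tfrac{2m}{n}\,\bigr\},$$
which follow from a short sign analysis of the numbers $\mu_i(G)-\tfrac{2m}{n}$ and $\mu_n(G)=0$. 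Since $n\le m\le2n-3$ gives $\tfrac{4m}{n}<8$, any $G$ with $\mu_1(G)<\tfrac{4m}{n}$ has all singular values of $LI(G)$ below $8$, hence $\|LI(G)\|_{F_2},\|LI(G)\|_{F_3}<24$, which is not extremal once $n$ is large; so I may assume $\mu_1(G)\ge\tfrac{4m}{n}$.

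Next I would pin down $G_{m,n}$. With $t:=m-n+1\ge1$ one has $G_{m,n}=K_1\vee\bigl(K_{1,t}\cup(2n-m-3)K_1\bigr)$, so the join formula gives as Laplacian eigenvalues of $G_{m,n}$: $n$; $\,m-n+3$; $\,2$ with multiplicity $m-n$; $\,1$ with multiplicity $2n-m-3$; and $0$. Thus $\mu_1(G_{m,n})=n$, $S_2(L(G_{m,n}))=m+3$, $S_3(L(G_{m,n}))=m+5$ when $m\ge n+1$, and $spr(L(G_{m,n}))=n-1$ when $m\le2n-4$. Plugging these in and using $\tfrac{4m}{n}>2$, a short computation shows that for $m\ge n+2$ the term $S_2(L(G_{m,n}))-\tfrac{4m}{n}$ is the unique largest in $\|LI(G_{m,n})\|_{F_2}$ and the term $S_2(L(G_{m,n}))-\tfrac{2m}{n}$ the unique largest in $\|LI(G_{m,n})\|_{F_3}$; the two remaining cases $m\in\{n,n+1\}$ of the conjecture are already Theorems \ref{th5,5}--\ref{th5,6} and \ref{th5,8}--\ref{th5,9}, so I may take $m\ge n+2$.

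Now I would collect the comparisons needed for every connected $G$ on $n$ vertices with $m$ edges: $(i)$ $\mu_1(G)\le n$ (Lemma \ref{le2,1}); $(ii)$ $S_2(L(G))\le m+3=S_2(L(G_{m,n}))$ with equality iff $G\cong G_{m,n}$; and $(iii)$ the crude bounds $S_3(L(G))\le m+6$ (a Brouwer-type estimate — established for trees, unicyclic and bicyclic graphs — and in any case enough to force $S_3(L(G))-\tfrac{6m}{n}<m+3-\tfrac{2m}{n}$ because $\tfrac{4m}{n}\ge4$) and $spr(L(G))<n$ (so $spr(L(G))+\tfrac{2m}{n}<m+3-\tfrac{2m}{n}$ when $m\ge n+2$ and $n$ is large). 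From $(i)$--$(iii)$ the two identities of the first paragraph give $\|LI(G)\|_{F_2}\le\|LI(G_{m,n})\|_{F_2}$ and $\|LI(G)\|_{F_3}\le\|LI(G_{m,n})\|_{F_3}$, so $G_{m,n}$ is a common maximiser; and by the second paragraph, equality in either norm forces the corresponding $S_2$-term of $G$ to equal its value at $G_{m,n}$, i.e.\ $S_2(L(G))=m+3$, whence $G\cong G_{m,n}$ by $(ii)$. Hence $G_{m,n}$ is the unique maximiser of each of the two norms, in particular the unique common maximiser.

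The hard part is $(ii)$, the $S_2$-maximality of $G_{m,n}$ with uniqueness: this is the conjecture of Guan et al.\ over the range $n\le m\le2n-3$, proved so far only for $m=n$ (Lemma \ref{le2,10}) and $m=n+1$ (Lemma \ref{le2,12}(iii)). The natural route is a switching/majorisation argument: first show that an $S_2$-maximiser must have a dominating vertex $u$ (so $G=K_1\vee H$ with $H$ on $n-1$ vertices, $m-n+1\le n-2$ edges, and $S_2(L(G))=n+1+\mu_1(H)$), then invoke that $K_{1,\,m-n+1}\cup(2n-m-3)K_1$ is the unique graph on $n-1$ vertices with $m-n+1$ edges of largest Laplacian spectral radius, which yields $G=G_{m,n}$. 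Making the first of these two reductions rigorous — and, as a secondary matter, establishing $S_3(L(G))\le m+6$ across the whole range $n\le m\le2n-3$ — is where essentially all of the difficulty lies.
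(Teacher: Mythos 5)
The statement you are trying to prove is stated in the paper as a conjecture: the paper offers no proof of it, and explicitly says it is ``based on the conjecture of Guan et al.\ \cite{GZW}''. Your argument does not close this gap, because its central ingredient $(ii)$ --- that $S_2(L(G))\le m+3$ with equality, among connected graphs with $n$ vertices and $m$ edges in the range $n\le m\le 2n-3$, if and only if $G\cong G_{m,\,n}$ --- is precisely that conjecture of Guan, Zhai and Wu (only the cases $m=n$ and $m=n+1$ are known, via Lemmas \ref{le2,10} and \ref{le2,12}). So what you have written is a conditional reduction of the paper's conjecture back to the open conjecture it was modelled on, which you yourself flag as ``where essentially all of the difficulty lies''; as a proof of the stated conjecture it is circular, and the sketched route to $(ii)$ (force a dominating vertex, then maximize $\mu_1(H)$ over the complement of the join) is exactly the hard open step, not an argument.

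There are further unproved or overstated ingredients even granting $(ii)$. The bound $S_3(L(G))\le m+6$ for all connected graphs with $n\le m\le 2n-3$ is the $k=3$ case of Brouwer's conjecture, known for trees, unicyclic and bicyclic graphs (the paper only ever uses it in those cases, via \cite{DZ}) but not in the generality you need for $m$ up to $2n-3$. Moreover, several of your comparisons are only valid for large $n$: the dismissal of graphs with $\mu_1(G)<\tfrac{4m}{n}$ (``norms $<24$, not extremal once $n$ is large''), the claim that $S_2(L(G_{m,n}))-\tfrac{4m}{n}>n$ for $m\ge n+2$ (false for small $n$, e.g.\ $m=n+2$, $n\le 8$), and the inequality $spr(L(G))+\tfrac{2m}{n}<m+3-\tfrac{2m}{n}$ all need explicit lower bounds on $n$, while the conjecture carries no such restriction. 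What is sound and genuinely useful in your write-up is the pair of identities $\|LI(G)\|_{F_2}=\max\{\mu_1(G),\,S_2(L(G))-\tfrac{4m}{n}\}$ and $\|LI(G)\|_{F_3}=\max\{S_2(L(G))-\tfrac{2m}{n},\,S_3(L(G))-\tfrac{6m}{n},\,spr(L(G))+\tfrac{2m}{n}\}$ (when $\mu_1(G)\ge\tfrac{4m}{n}$), which is exactly the mechanism the paper uses to deduce Theorems \ref{th5,5}, \ref{th5,6}, \ref{th5,8} and \ref{th5,9} from Lemmas \ref{le2,10}--\ref{le2,12}; your observation that the conjecture would follow (for large $n$) from the Guan--Zhai--Wu conjecture together with Brouwer's $k=3$ bound is a worthwhile remark, but it should be presented as such, not as a proof.
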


\begin{picture}(300,65)
\put(65,40){\circle*{3}}
    \put(65,40){\line(-5,4){23}}
    \put(65,40){\line(-5,-4){23}}
    \put(65,40){\line(1,0){20}}
    \put(85,40){\circle*{3}}
     \put(85,40){\line(1,0){20}}
     \put(105,40){\circle*{3}}
     \put(113,37){$\cdots$}
    \put(135,40){\circle*{3}}
     \put(135,40){\line(1,0){20}}
     \put(155,40){\circle*{3}}
     \put(155,40){\line(1,0){20}}
    \put(175,40){\circle*{3}}
    \put(43,58){\circle*{3}}
    \put(43,58){\line(0,-1){36}}
    \put(43,22){\circle*{3}}
    \put(108,5){\footnotesize $U_n^2$}

    \put(265,40){\circle*{3}}
    \put(265,40){\line(-5,4){23}}
    \put(265,40){\line(-5,-4){23}}
    \put(265,40){\line(1,0){20}}
    \put(285,40){\circle*{3}}
     \put(285,40){\line(1,0){20}}
     \put(305,40){\circle*{3}}
     \put(313,37){$\cdots$}
    \put(335,40){\circle*{3}}
     \put(335,40){\line(1,0){20}}
     \put(355,40){\circle*{3}}
     \put(355,40){\line(1,0){20}}
    \put(375,40){\circle*{3}}
    \put(375,40){\line(5,4){23}}
    \put(375,40){\line(5,-4){23}}
    \put(397,58){\circle*{3}}
    \put(397,58){\line(0,-1){36}}
    \put(397,22){\circle*{3}}
    \put(243,58){\circle*{3}}
    \put(243,58){\line(0,-1){36}}
    \put(243,22){\circle*{3}}
    \put(318,5){\footnotesize $B_n^1$}

\put(130,-15){Fig. 5.1 \quad Graphs $U_n^2$ and $B_n^1$. }
\end{picture}

\vskip 3mm
{\bf Acknowledgment}

The authors are grateful to the anonymous referees for their valuable comments which result in an improvement of the
earlier version of this paper.

\small {

}

\end{document}